\numberwithin{equation}{section}
\newcommand{\R}{\mathbb{R}}
\newcommand{\Z}{\mathbb{Z}}
\newcommand{\E}{\mathbb{E}}
\renewcommand{\P}{\mathbb{P}}
\newcommand{\norm}[1]{\lVert #1 \rVert}
\newcommand{\abs}[1]{\lvert #1 \rvert}
\newcommand{\eps}{\epsilon}
  \crefname{theorem}{Theorem}{Theorems}
  \crefname{lemma}{Lemma}{Lemmas}
  \crefname{remark}{Remark}{Remarks}
  \crefname{proposition}{Proposition}{Propositions}
\crefname{notation}{Notation}{Notations}
\crefname{claim}{Claim}{Claims}
  \crefname{definition}{Definition}{Definitions}
  \crefname{corollary}{Corollary}{Corollaries}
  \crefname{section}{Section}{Sections}
  \crefname{figure}{Figure}{Figures}
    \crefname{assumption}{Assumption}{Assumptions}
\newtheorem{theorem}{Theorem}[section]
\newtheorem{remark}[theorem]{Remark}
\newtheorem{corollary}[theorem]{Corollary}
\newtheorem{assumption}[theorem]{Assumption}
\newtheorem{lemma}[theorem]{Lemma}
\newtheorem{definition}[theorem]{Definition}
\newtheorem{proposition}[theorem]{Proposition}
\renewcommand{\d}{{\sharp }}
\title{Note on mixing time in ``general'' non flat domains}
\renewcommand{\[}{\begin{equation}}
\renewcommand{\]}{\end{equation}}
\title{The mixing time of the  lozenge tiling Glauber dynamics}
\author{Beno\^it Laslier}
\address{Université Paris Cité,
UFR de Mathématiques, bâtiment Sophie Germain,
8 place Aurélie Nemour,
75205 Paris CEDEX 13, FRANCE} \email{laslier@lpms.paris}
\author{Fabio Lucio Toninelli}
\address{Technical University of Vienna, Institut f\"ur Stochastik und Wirtschaftsmathematik, Wiedner Hauptstra{\ss}e 8-10, A-1040 Vienna, Austria} \email{fabio.toninelli@tuwien.ac.at}
\begin{document}

\maketitle

\begin{abstract}
The broad motivation of this work is a rigorous understanding of
reversible, local Markov dynamics of interfaces, and in particular their
speed of convergence to equilibrium,  measured via the
mixing time $T_{mix}$.  In the $(d+1)$-dimensional setting, $d\ge2$, this is to a large extent
mathematically unexplored territory,
especially for discrete interfaces. On the other hand, on the basis of a
mean-curvature motion heuristics \cite{Spohn,Henley} and simulations (see
\cite{Destainville} and the references in \cite{Wilson,Henley}), one expects
convergence to equilibrium to occur on time-scales of order
$\approx \delta^{-2}$ in any dimension, with $\delta\to0$ the lattice mesh.
  
We study the single-flip Glauber dynamics for lozenge tilings of
a finite domain of the plane, viewed as $(2+1)$-dimensional surfaces.
The stationary measure is the uniform measure on admissible
tilings. At equilibrium, by the limit shape theorem \cite{CKP}, the
height function concentrates as $\delta\to0$ around a deterministic
profile $\phi$,  the unique minimizer of a surface tension
functional.  Despite some partial mathematical
results \cite{Wilson,LT_ptrf,LT_cmp}, the
conjecture  $T_{mix}=\delta^{-2+o(1)}$  has been
proven, so far, only in the situation where $\phi$ is an affine
function \cite{CMT}.  In this work, we prove the conjecture
 under the sole assumption that the limit
shape $\phi$ contains no frozen regions (facets).
\end{abstract}

\section{Introduction}

To define the problem that we study in this work, we start from
$\mathcal T_\delta$, the periodic triangular planar lattice where each
face is an equilateral triangle of side $\delta$. A \emph{tile} will
denote the lozenge-shaped polygon obtained by the union of two
adjacent triangular faces. Note that tiles can have three different
orientations, see Figure \ref{fig:1}.  We say that a domain $D_\delta$
(a bounded, connected union of faces of $\mathcal T_\delta$) is \emph{tilable}
if it can be covered with tiles, in such a way that tiles do not
intersect (except along their boundaries) and leave no 
hole. Call $\Omega_{D_\delta}$ the set of possible tilings of
$D_\delta$.
\begin{figure}[h]
\centering
  \includegraphics[scale=.25]{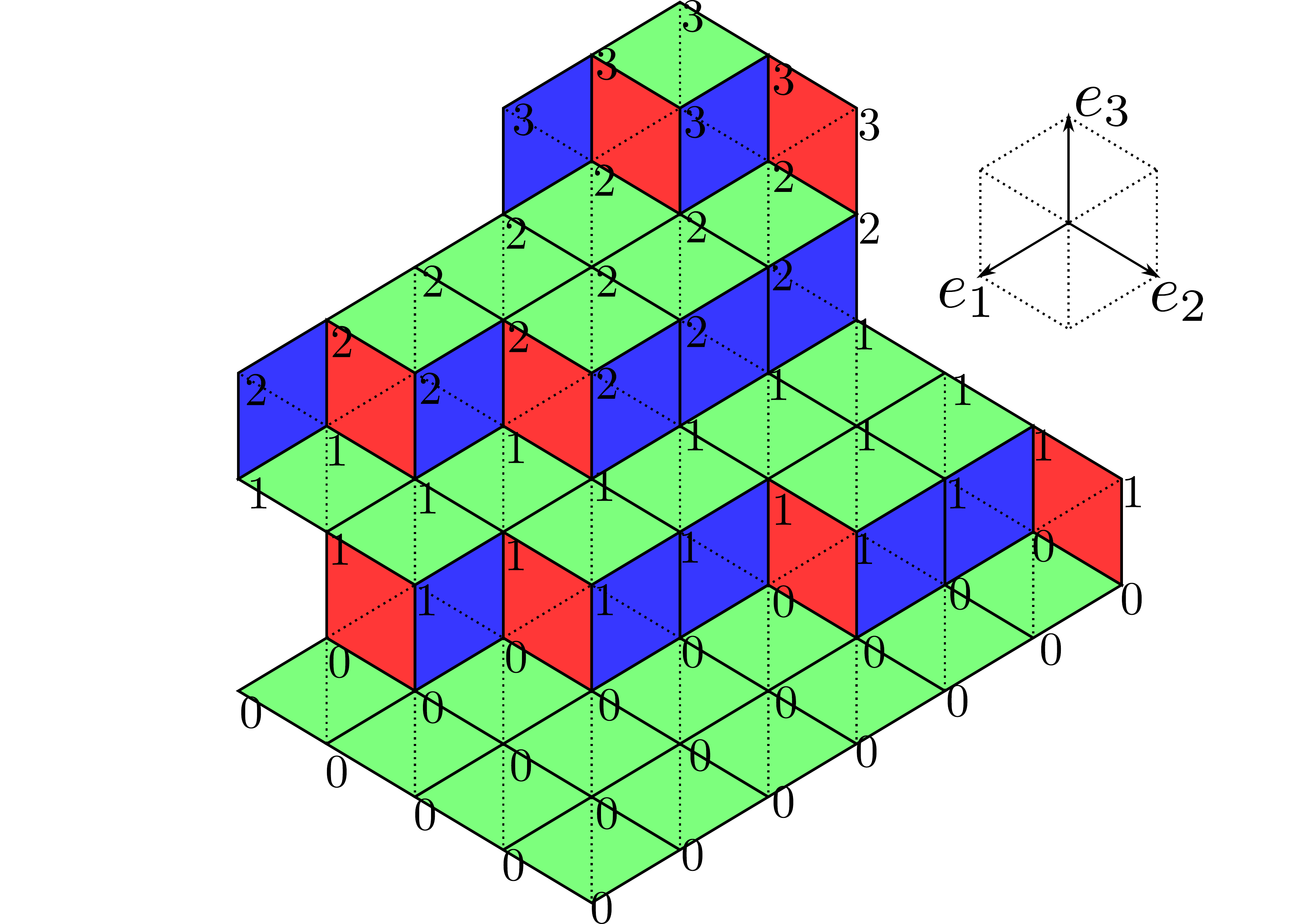}
  \caption{A tilable domain and the height function of a possible tiling. The value of the height function is given in units of $\delta$.}
\label{fig:1}
\end{figure}

A very natural, local, continuous-time Markov dynamics on the state space
$\Omega_{D_\delta}$ is defined by assigning a transition rate $1/2$ to
the two elementary updates in Fig. \ref{fig:2}, which consist in
rotating three lozenges by $120^\circ$ around their common vertex.
\begin{figure}[h]
\centering
  \includegraphics[scale=.3]{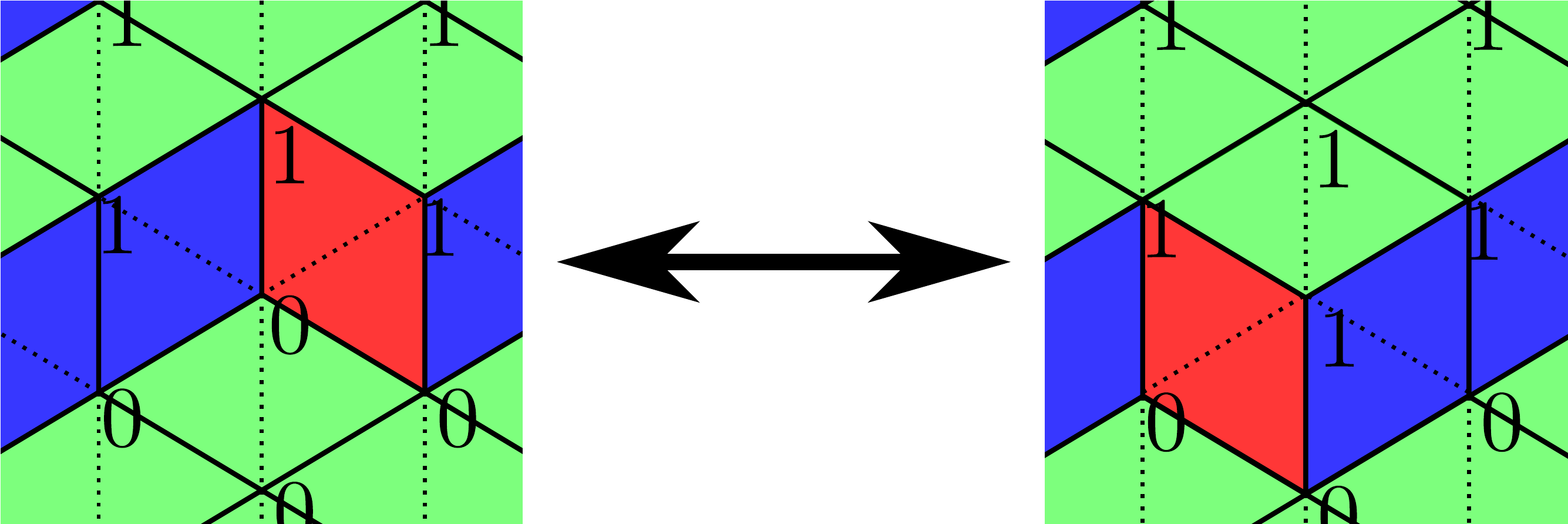}
  \caption{The elementary updates of the Glauber dynamics, each having transition rate $1/2$.}
\label{fig:2}
\end{figure}
We refer to this as the ``tiling Glauber dynamics'' and in fact, as is
well known (see e.g. \cite{CMT}), it can be seen as a zero-temperature
limit of the Glauber dynamics of the three-dimensional Ising model
with certain Dobrushin-type boundary conditions.  The stationary (and
reversible) measure of the process is the uniform distribution on
$\Omega_{D_\delta}$. The process is easily seen to be ergodic, and a
classical question in the domain of Markov chains \cite{LP} is to
understand how fast equilibrium is reached, as measured via the mixing
time $T_{mix}$.  In \cite{LRS} Luby, Randall and Sinclair proved that
if $D_\delta$ has diameter of order $1$, so that it contains
approximately $\delta^{-2}$ faces, then in the limit $\delta\to0$ the
mixing time grows at most like a polynomial: $T_{mix}\lesssim
\delta^{-C}$ for some $C>0$. This ``fast mixing'' result, based on a
smart coupling idea, is already non-trivial in view of the fact that
for most ``reasonable'' domains $D_\delta$, the state space is of
cardinality $\approx \exp(c/\delta^{2})$. The main result of the
present work is that, for a class of natural domains $D_\delta$ that
approach as $\delta\to0$ some bounded domain $D\subset \R^2$, the
mixing time is $T_{mix}=\delta^{-2+o(1)}$.

In order to better explain our result, the expected picture and the
motivations for our work, let us take a step back and put the problem
into a more general context. The first important observation is that a
lozenge tiling can be mapped into a discrete height function $h$,
defined on the collection $W_\delta$ of the tile vertices, and taking
values in $\delta\Z$ (see Fig. \ref{fig:1}). The tiling Glauber
dynamics can then be seen as a continuous-time, reversible, Markov
evolution of a discrete height function with local update rules, where
elementary updates consist in adding $\pm \delta$ to the height of a
single vertex. The equilibrium measure, that we call
$\pi_{W_\delta,h|_{\partial W_\delta}}$, is the uniform measure on
height functions satisfying a certain local Lipschitz constraint (see
\eqref{eq:gradients}) and with fixed boundary value $h|_{\partial
  W_\delta}$ on the boundary of $W_\delta$, determined by the shape of
the domain $D_\delta$.  In the scaling limit where the lattice mesh
$\delta$ tends to $0$, assuming that the domain $W_\delta$ tends to a
continuous bounded domain $W\subset \R^2$ and the boundary height
tends to some Lipshitz function $\phi|_{\partial W}$, the height
function sampled from the measure $\pi_{W_\delta,h|_{\partial
    W_\delta}}$ tends in probability to a deterministic \emph{limit
  shape} $\phi$, that minimizes a surface energy functional
\cite{CKP,CLP}. Interestingly, for certain boundary conditions the
limit shape has facets (or ``frozen regions'') which, at the
microscopic level, contain with overwhelming probability only one of
the three types of tiles. The phenomenon of appearance of facets in
the limit shape is called ``arctic circle'' or ``frozen boundary''
phenomenon \cite{CKP,CLP,KO}.

On the basis of phenomenological arguments, one expects a hydrodynamic
limit on the diffusive time scale $1/\delta^2$, i.e., the height profile $h_{t/\delta^2}(\cdot)$
should tend as $\delta\to0$ to the solution $H(t,\cdot)$ of a parabolic
PDE of the type
\[
\label{eq:hydro}
\partial_t H=\mathcal L H,
\]
with $\mathcal L$ an elliptic non-linear operator such that $\mathcal
L\phi=0$ if $\phi$ is the limit shape.  Note that, since $\mathcal L$
is elliptic, \eqref{eq:hydro} is a variant of mean curvature motion; the main difference is that its stationary points minimize surface tension instead of surface area.  On the basis
of this picture, it is natural to conjecture that the time-scale for
equilibration (mixing time) is of order $\delta^{-2+o(1)}$.  The
$o(1)$ hides unavoidable sub-dominant corrections: in fact, we
point out that even in the much easier case of $1-$dimensional symmetric simple
exclusion process (SSEP) \cite{Wilson,Lacoin} or one-dimensional $\nabla\varphi$ interface dynamics with convex potential \cite{CLL}, the mixing time turns out to be of order
$\delta^{-2}\times |\log\delta|$.

The coupling argument of \cite{LRS} does not at all use this ``mean
curvature evolution'' intuition and, not surprisingly, it does not
allow to capture the conjectured exponent $C=2$ in the
$\delta^{-C+o(1)}$ behavior of the mixing time. The first step towards
establishing the conjecture was performed in \cite{CMT}, where it was
proven that indeed $T_{mix}=\delta^{-2+o(1)}$ under the important
restriction that the limit shape $\phi$ is an affine function (this is a non-trivial restriction on the domain $D_\delta$). The
simplifying feature of this case, in few words, is that the more
the interfaces approaches equilibrium, the more its law resembles
(locally) that of an infinite-volume, translation invariant Gibbs
state, for which very sharp height fluctuations results are known
\cite{Kenyon-lectures} (the role of such estimates will become clear later in the paper).

In the generic case where the limit shape $\phi$ is curved, the only
available mathematical confirmation of the expected $\delta^{-2+o(1)}$
time-scale for equilibration is provided by \cite{LT_cmp}, which
proves that at such time scales the height profile is with high
probability within $L^\infty$ distance $\epsilon$ from $\phi$, for any
fixed $\epsilon>0$. We emphasize that this result, while suggestive of
the expected behavior, has no implications on $T_{mix}$: it could very
well be that the height profile is at some time at very small $L^\infty$ distance
from the limit shape, yet the \emph{law} of the height function at
the same time is at variation distance essentially $1$ from the
equilibrium measure.

In the present work, as already mentioned, we prove the
$T_{mix}=\delta^{-2+o(1)}$ conjecture, under the sole assumption that
$\phi$ contains no facets. Our method builds on one hand on an
iterative procedure first developed in \cite{CMT} (which itself is
inspired by the mean-curvature heuristics), and on the other hand on
sharp equilibrium fluctuation results on domains of mesoscopic size
(much larger than the mesh size but much smaller than $1$).  Let us
emphasize that, while for instance the recent \cite{Aggarwal} provides a
``local equilibrium'' type of result on microscopic domains of order
$\delta$ (where the equilibrium measure coincides asymptotically with
a translation-invariant Gibbs measure), we really need to consider
mesoscopic domains where the effect of the curvature of the height
profile, that drives interface motion, is non-negligible. Technically,
we rely in this respect on the recent work \cite{L_arxiv}.

\subsection{The broader context}

To put our result in a broader context, let us point out that the
$\delta^{-2+o(1)}$ behavior of the mixing time is expected for many
natural, local, reversible interface dynamics in any dimension
$(d+1),d\ge1$, and not just when $d=2$ as in the case under
consideration. The result, in its sharper form $T_{mix}\approx \delta^{-2}|\log \delta|$,  is known to hold in several
$(1+1)$-dimensional reversible interface dynamics, most notably, the
symmetric simple exclusion (SSEP) (\cite{Wilson,Lacoin}, see also
\cite{KL} for the hydrodynamic limit) and Ginzburg-Landau
$\nabla\varphi$ models with convex potential (see \cite{CLL}, where a
proof of the total variation cut-off phenomenon is also obtained). It is important to
emphasize that the $(1+1)$-dimensional case is easier for at least two
reasons. First, by looking at interface gradients, the dynamics can be
seen as an interacting particle system, whose equilibrium distributions are
often of i.i.d. type (for instance, they are i.i.d. Bernoulli measures for
SSEP), while interface gradients exhibit power-law decaying
correlations in higher dimension. Secondly, limit shapes are
 affine in one dimension (by convexity of the surface
tension), while they are generically curved in higher dimension.

On the other hand, we are not aware of a result comparable to ours in
dimension $(d+1),d\ge2$, especially for discrete interfaces. A notable
exception is \cite{Wilson}, which proves $\delta^{-2}|\log \delta|$
bounds for the mixing time of a \emph{non-local} lozenge tiling
dynamics, where ``non-local'' refers to the fact that each update
flips a random number, that can be as large as $\delta^{-1}$, of
tiles. Let us also mention that for continous-height, \emph{Gaussian}
interface dynamics (that is, Ginzburg-Landau $\nabla \phi$ models with quadratic
interaction potential, also known as the discrete GFF) the proof of
$T_{mix}\approx \delta^{-2}|\log \delta|$ is rather easy, via an
application of the method of \cite{Wilson}. In the $(2+1)$-dimensional
case, for this Gaussian process, the total variation cut-off
phenomenon has also been proven to hold \cite{GG}.

A final remark: \emph{biased} versions of (discrete) interface
dynamics have been considered in the literature. In this case, the
rate for updates that increase the height is $p$ and that of updates
that decrease it is $q$ with, say, $p<q$. In infinite volume, these
processes are irreversible and celebrated examples are the
one-dimensional Asymmetric Simple Exclusion Process (ASEP) and its
totally asymmetric counterpart, the TASEP (corresponding to $p=0$). In
finite volume, instead, these dynamics are reversible, and the
stationary measures are just those of the symmetric processes, tilted
by $(p/q)^V$, with $V$ the volume below the interface. In this case,
the phenomenology is \emph{qualitatively very different} from the one studied
in the present work.  For one thing, macroscopic shapes in this case
minimize not the free energy but the free energy with a volume
constraint, and the mixing time turns out to scale like
$\delta^{-1+o(1)}$ rather than $\delta^{-2+o(1)}$. We refer to
\cite{LLabbe} for the ASEP in an interval (the authors prove the sharp
estimate $T_{mix}\sim c \delta^{-1}$, as well as the occurrence of the
cut-off phenomenon) and to \cite{GP,CMTRSA} for a dynamics of biased
plane partitions, which is a lozenge tiling dynamics where the two
updates of Fig. \ref{fig:2} have different transition rates $p$ and
$q$. Also in the latter case, the result is that
$T_{mix}=\delta^{-1+o(1)}$ (see \cite{GP}, where this is proven for small enough bias $\log(p/q)$, and \cite{CMTRSA} for the general case of arbitrary non-zero bias).

\subsection*{Organization of the article} The rest of this work is organized as follows. In Section
\ref{sec:results} we define precisely the problem and state the main
results.  In Section \ref{sec:prelsketch}, we give some preliminary
results and we present a sketch of the strategy of the proof of our
main theorem. The proof of the mixing time bound is reduced in Section
\ref{sec:UB} to an inductive statement, \cref{th:core}. In Section
\ref{sec:core} we prove \cref{th:core} up to two sharp equilibrium
results on mesoscopic scales, whose proofs are the contents of the
final \cref{sec:perturbation_continuum}.

\section{Statement of the problem and results}
\label{sec:results}
The Glauber dynamics on lozenge tilings can be seen as a
continuous-time Markov process on a certain set of discrete-valued
height functions, and this is the point of view we adopt in the whole
paper.  The height function is defined on a portion of the planar
triangular lattice. Since we are interested in large-scale behavior,
we rescale the lattice mesh by $\delta>0$ and we denote $\mathcal
T_\delta$ the rescaled lattice.  We let $e_1,e_2,e_3$ denote the
elementary vectors of Figure \ref{fig:1}.

We start with a few preliminary definitions.
\subsection{Preliminary definitions}
\label{sec:somenot}
Given an open bounded domain $D$ of $\mathbb R^2$,  a continuous function $f:D\mapsto \mathbb R$ and a real number $\delta>0$, we let:
\begin{itemize}
\item
$\partial D$ be the boundary of $D$;
\item $D^{\sharp}$ be the
collection of vertices of $\mathcal T_\delta$ that are contained in
$D$;
\item for $V\subset D$, $f|_V$ be the restriction of $f$ to $V$.
\end{itemize}

Given $K>0$, we  say that $f^{\d}:D^{\d}\mapsto \delta \Z$ is a $K$-discretization of $f:D\mapsto \R$ if 
\begin{eqnarray}
  \max_{x\in D^\d}|f^\d(x)-f(x)|\le K\delta.
\end{eqnarray}
Finally, given a subset $A$ of $\mathcal T_\delta$ (for instance, $D^\d$)
we let $\partial A$ be the collection of vertices in $A$ that have
  a neighbor in $\mathcal T_\delta\setminus A$.

  Given $A\subset \mathcal T_\delta$, a function $h:A\mapsto \delta \Z$ is said to be an admissible height  function (or just ``height function'' for short) on $A$ if whenever $x,y\in A$ are nearest neighbors, then
  \begin{equation} 
  h(x)-h(y) \in
    \begin{cases}
       \{-\delta, 0\} &\text{ if }  x-y=e_1 \text{ or } x-y=e_2\\
        \{0, \delta\} & \text{ if }  x-y=e_3. 
    \end{cases} \label{eq:gradients}
  \end{equation}
See Fig. \ref{fig:1}. 

\begin{remark}[The Newton Polygon $\mathbb T$]Because of the conditions \eqref{eq:gradients} imposed on the discrete gradients of $h$, not any continuous function $f:\R^2\mapsto \R$ admits a discretization
that is an admissible height function on $\mathcal T_\delta$. For that
to be possible, $f$ must be Lipchitz and, in addition, $\nabla f$ must
belong to a certain polygon, called ``Newton Polygon''. If we choose a coordinate
system on $\R^2$ with coordinate axes parallel to the vectors
$e_1,e_2$ of Fig. \ref{fig:1}, then $\mathbb T$ is the triangle of
vertices $(0,0),(-1,0),(0,-1)$.  If the function $f:D\mapsto \R$ is
Lipschitz and $\nabla f\in \mathbb T$, then 
there exists a $1-$discretization $f^\d$ that is a height function on $D^\d$ (see e.g. \cite[Sec. 2.1]{LT_cmp}
and \cite[Sec. 2.5.4]{LT_ptrf}).
\label{rem:T}
\end{remark}

  Given $A\subset \mathcal T_\delta$ and a boundary height function $g:\partial A\mapsto \delta\Z$, a
  height function $h:A\mapsto \delta \Z$ is said to be compatible with
  the boundary value if it coincides with $g$ on $\partial A$.  We let $\Omega_{A,g}$ be the (possibly empty) set of
  height functions on $A$, compatible with $g$.

  \subsection{The dynamics, its stationary measure and the limit shape phenomenon}

Given $A\subset \mathcal T_\delta$ and a boundary height function
$g:\partial A\mapsto \delta\Z$, we define the dynamics as a
continuous-time Markov process on $\Omega_{A,g}$, assumed to be
non-empty. This can be described as follows: each vertex in
$A\setminus \partial A$ has an i.i.d. mean-one Poisson clock. When the
clock at $x$ rings, the height function at $x$ is resampled uniformly
among the possible values of $h(x)$ given $\{h(y),y\ne x\}$. Note
that, because of \eqref{eq:gradients}, the number of such possible
values is either $1$ (in which case we can equivalently say that the
update does not occur) or $2$.  It is also elementary to see that the
uniform measure $\pi_{A,g}$ on $\Omega_{A,g}$ is stationary and
reversible.

Assume now that the domain $A$ is the discretization $D^\d$ of an open
domain of $\mathbb R^2$ and the boundary height $g$ is the restriction
to $\partial D^\d$ of the discretization $f^\d$ of some $f:\R^2\mapsto
\R$, such that $\Omega_{D^\d,f^\d|_{\partial
    D^\d}}\ne\emptyset$.  Then, the height
function exhibits a \emph{Law of Large Numbers}  \cite{CKP,CLP}, better known as \emph{Limit Shape Theorem} in this context: there exists a
unique,  continuous, deterministic function $\phi:D\mapsto \R$ such that, for every $\epsilon>0$,
\begin{eqnarray}
  \label{Eq:limshape}
\pi_{D^\d,f^\d|_{\partial D^\d}}\big(\|h-\phi\|_\infty\ge \epsilon\big)\stackrel{\delta\to0}\longrightarrow0,
\end{eqnarray}
where $\|h-\phi\|_\infty:=\max_{x\in D^\d}|h(x)-\phi(x)|$ and
$\phi$ satisfies the boundary condition
$\phi|_{\partial D}=f|_{\partial D}$.
The function $\phi$ is called \emph{limit shape} and it is the unique minimizer of a \emph{surface tension functional}
\[
\Phi(\phi)=\int_D \sigma(\nabla \phi)dx
\]
among all functions with Dirichlet boundary conditions $\phi|_{\partial D}=f|_{\partial D}$. The function $\sigma$ is fully explicit \cite{Kenyon-lectures} but its specific form is not crucial for the present work. In the following remark we summarize all what we need to know about it.

\begin{remark}[Surface tension and Euler-Lagrange equation]
  The function $\sigma$ is convex; it is finite in the Newton Polygon $\mathbb T$, equals zero on its boundary $\partial \mathbb T$ and equals $+\infty$ outside of it. In the interior of $\mathbb T$, $\sigma$ is real analytic and strictly convex. More precisely, $\sigma_{ii}>0$, where $\sigma_{ij},1\le i,j\le 2$ denotes the derivative of $\sigma$ with respect to its $i^{th}$ and $j^{th}$ arguments.
 In the regions of $D$ where the limit shape $\phi$ is $C^2$, it satisfies the Euler-Lagrange equation
\begin{eqnarray}
  \label{eq:EL}
  {\rm div} (\nabla\sigma\circ\nabla \phi):=\sum_{i,j=1}^2 \sigma_{ij}(\nabla\phi)\partial^2_{x_i x_j}\phi= 0,
\end{eqnarray}
which is a non-linear elliptic PDE.
\end{remark}
A very interesting and well-known aspect of the limit shape $\phi$ of
lozenge tilings is that it can exhibits facets. That is, there are
domains $D$ and boundary conditions $f$ such that the minimizer $\phi$
contains both \emph{liquid regions} where $\nabla \phi$ belongs to the
interior of $\mathbb T$ and \eqref{eq:EL} holds, and also \emph{frozen
  regions} (with non-empty interior) where $\nabla\phi\in \partial
\mathbb T$. The boundary between liquid and frozen regions is usually
called \emph{frozen boundary} and it consists in algebraic curves \cite{KO,Duse}.

In this work, we study the mixing time of the Glauber dynamics, under
the assumption that the limit shape $\phi$ has no such frozen
regions. Understanding the interplay between dynamics and  frozen boundaries remains an interesting and  challenging open problem.

\subsection{The mixing time bounds}

In order to define the precise setting of our results, we start from 
a simply connected, bounded, open domain $U$ of
$\mathbb R^2$, whose boundary is a Jordan curve.
\begin{assumption}[The limit shape]
  \label{ass:ls}
We let $\phi:U\mapsto \mathbb R$ be a
$C^\infty$ function that satisfies the Euler-Lagrange equation
\eqref{eq:EL} in $U$. We further
require that $\phi$ contains no frozen regions or more precisely that
there exists $a>0$ such that
\begin{eqnarray}
  \label{eq:nonfroz}\inf_{x\in   U}d(\nabla\phi(x),\partial\mathbb T)\ge a,
\end{eqnarray}
  with $d$ the Euclidean distance.
\end{assumption}
Actually, because the surface tension $\sigma$ is
$C^\infty$ and real analytic in the interior of the Newton polygon, the weaker assumption
\begin{eqnarray}
  \label{eq:weaker}
  \phi\in C^1(U) \text{ and } \eqref{eq:nonfroz} \text{ holds }
\end{eqnarray} implies that $\phi$  is $C^\infty$ and real analytic in $U$ \cite[Sec. VI.3]{Giaquinta}. In particular,  in \cref{ass:ls} we can replace the $C^\infty$ by the $C^2$ requirement. We refer to \cite{Duse} for much deeper information on the regularity properties of limit shapes.

\begin{assumption}[The boundary condition on the dynamics]
  \label{assu:W}
We let $W$ be a simply connected, bounded, open domain of
$\mathbb R^2$, whose boundary is a Jordan curve, and such that
$\overline W\subset  U$. The dynamics is  defined in the
discrete set $W^\d$ and the boundary condition is given by
$\phi^\d|_{\partial W^\d}$, i.e., the restriction to the
boundary of $W^\d$ of some $K$-discretization of $\phi$ with  $\Omega_{W^\d,\phi^\d|_{\partial W^\d}}$ non-empty.
  
\end{assumption}
The constants in the following theorem can depend implicitly on $K,W,\phi$.
 \begin{theorem}
   \label{th:main}
   For the Glauber dynamics in  $W^\d$  with boundary condition  $\phi^\d|_{\partial W^\d}$ satisfying Assumption \ref{assu:W}, there exists a constant $c_->0$ and, for every $\eta>0$, a constant
   $c_+<\infty$ such that the mixing time satisfies for all $\delta>0$
  \begin{eqnarray}
    \label{eq:main}
c_- \delta^{-2}\le     T_{mix}\le c_+\delta^{-2-\eta}.
  \end{eqnarray}
\end{theorem}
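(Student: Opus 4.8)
\section*{Proof proposal}

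\textbf{Overview of the strategy.} The lower bound $T_{mix}\ge c_-\delta^{-2}$ is the easy direction and I would obtain it by a standard test-function / bottleneck argument: pick a single vertex $x_0$ well inside $W^\d$, and observe that under the Glauber dynamics $h_t(x_0)$ changes by $\pm\delta$ at rate $O(1)$ per flip at $x_0$, so that in time $t$ the height at $x_0$ can have moved by at most $O(\delta\sqrt{t}+\delta t)$ in a suitable sense; starting from an extremal configuration (the maximal or minimal height function compatible with the boundary condition, which differs from $\phi^\d$ by an amount of order $1$ on macroscopic scales since $\phi$ is not extremal), one needs time $\gtrsim\delta^{-2}$ for $h_t(x_0)$ to approach its equilibrium value. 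Equivalently, one uses the observable $h(x_0)$, whose equilibrium variance is $O(\log(1/\delta))$ by Kenyon-type fluctuation estimates, while its expectation starting from an extremal state is off by $\Theta(1)$; a Poincaré/relaxation-time estimate then forces $T_{mix}\gtrsim\delta^{-2}$. This part does not use Assumption \ref{ass:ls} in any essential way beyond non-degeneracy of $\phi$.

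\textbf{The upper bound: reduction to an inductive statement.} The substance of the theorem is the bound $T_{mix}\le c_+\delta^{-2-\eta}$, and here the plan is to follow and adapt the multiscale scheme of \cite{CMT}. The idea is to run the dynamics from the two extremal initial conditions $h^{\max}$ and $h^{\min}$ (monotone coupling sandwiches every other initial condition between these two, so it suffices to bound the coupling time of this pair), and to show that after time $\delta^{-2+o(1)}$ the gap $h^{\max}_t-h^{\min}_t$ in $L^\infty$ has shrunk from $O(1)$ to $O(\delta^{1-o(1)})$, at which point a final ``endgame'' argument closes the remaining discrepancy. The shrinking is run in stages: at stage $k$ the discrepancy is of order $\ell_k$ (a sequence decreasing geometrically from $1$ down to $\approx\delta$), and one shows that after an additional time of order $\ell_k^2\,\delta^{-o(1)}$ the discrepancy drops to $\ell_{k+1}=\ell_k/2$ (say). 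Summing a geometric series of times $\sum_k \ell_k^2 \delta^{-o(1)}$ gives a total of order $\delta^{-o(1)}$ times $\sum_k \ell_k^2 = O(1)$ in the macroscopic stages, but the dominant contribution comes from the smallest scales where $\ell_k\approx\delta$, giving the claimed $\delta^{-2-\eta}$; the number of stages is $O(\log(1/\delta))$, which is absorbed into the $\delta^{-\eta}$. The engine of one stage is a comparison, on each mesoscopic box of side $\approx\ell_k$, between the actual (out-of-equilibrium) interface and an auxiliary interface evolving according to the hydrodynamic/mean-curvature drift; the key point is that on a box of mesoscopic side $\ell$, equilibrium fluctuations are of size $\sqrt{\log(\ell/\delta)}\,\delta\ll\ell$, so after the box has locally equilibrated (which happens on time $\ell^2\delta^{-o(1)}$ by the fast-mixing input of \cite{LRS} rescaled to the box), the height profile tracks the deterministic evolution driven by \eqref{eq:hydro}, and the curvature term pushes the two sandwiching profiles toward each other at the required rate. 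This is precisely the content that would be packaged as the inductive \cref{th:core} announced in the introduction.

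\textbf{The sharp equilibrium inputs and the main obstacle.} The step that makes the generic curved case genuinely harder than the affine case of \cite{CMT}, and where I expect the real work to lie, is the mesoscopic equilibrium estimate: on a box $B$ of side $\ell$ with boundary data close to (the local linearization of) a curved limit shape $\phi$, one needs (i) a sharp limit-shape statement — the equilibrium height profile is within $\delta^{1-o(1)}$ (uniformly, or in a strong enough norm) of the continuum minimizer on $B$ — and (ii) sharp fluctuation/concentration bounds, of the form $\pi_B(\|h-\phi_B\|_\infty \ge t\sqrt{\log(\ell/\delta)}\,\delta)\le e^{-c t^2}$ or similar, \emph{with constants uniform over the family of tilted boundary conditions and box sizes} that appear in the induction. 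In the affine case these follow from translation-invariant Gibbs state theory \cite{Kenyon-lectures}; in the curved case one must handle the spatial variation of $\nabla\phi$ and the non-constant coefficients $\sigma_{ij}(\nabla\phi)$ in \eqref{eq:EL}, and this is exactly where the paper says it relies on \cite{L_arxiv}. So my plan is: (1) isolate the two clean statements — a quantitative limit shape bound and a quantitative fluctuation bound on mesoscopic domains with curved boundary data — and defer their proofs to the perturbation-theory section; (2) prove \cref{th:core} assuming them, via the monotone-coupling + box-by-box hydrodynamic comparison sketched above; (3) derive \cref{th:main} by running the induction down to scale $\ell\approx\delta^{1-\eta'}$ and finishing with a short fast-mixing endgame at the microscopic scale. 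The hardest single ingredient, and the place where the argument could break, is guaranteeing the \emph{uniformity} of the equilibrium estimates in (ii) as the boundary tilt ranges over a compact subset of the interior of $\mathbb T$ (which Assumption \ref{ass:ls} provides via \eqref{eq:nonfroz}) and as $\ell/\delta\to\infty$ at the various induction scales — without such uniformity the geometric sum of stage-times does not close.
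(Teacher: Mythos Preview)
Your high-level plan is close in spirit to the paper's, and you correctly identify the crux: sharp, \emph{uniform} mesoscopic equilibrium estimates (limit shape plus fluctuations) on boxes with curved boundary data, supplied by \cite{L_arxiv}. The lower bound sketch is fine.

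However, the quantitative scheme you propose for the upper bound does not close, and the error is not cosmetic. You take the discrepancy $\ell_k$ to halve geometrically and allot time $\ell_k^2\delta^{-o(1)}$ per stage; with $\ell_k\approx\delta$ at the last stage this gives $\delta^{2-o(1)}$, not $\delta^{-2-\eta}$, so the arithmetic simply does not produce the claimed bound. The underlying reason the geometric scheme fails is this: on a mesoscopic box of radius $r$ with boundary data $\phi+\epsilon\psi$, the ``curvature push-down'' at the center (the amount by which the local limit shape sits below $\phi+\epsilon\psi$) is of order $\epsilon r^2$, not of order $\epsilon$. To beat the equilibrium fluctuations of size $\delta|\log\delta|^{O(1)}$ you are forced to take $\epsilon r^2\approx\delta^{1-\eta}$, which pins $r$ as a function of $\epsilon$ and limits the decrease of $\epsilon$ per stage to $O(\delta)$, \emph{independently of $\epsilon$}. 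Hence the shrinkage is arithmetic, $\epsilon_{i+1}=\epsilon_i-\delta$, with $i_{\max}\approx\delta^{-1}$ stages (not $\log(1/\delta)$), and the time per stage is $(r_i/\delta)^2\delta^{-O(\eta)}=\delta^{-1-O(\eta)}/\epsilon_i$; summing $\sum_i 1/\epsilon_i\approx\delta^{-1}\log(1/\delta)$ yields the total $\delta^{-2-O(\eta)}$. This coupling between box size and current discrepancy, $r_i^2\epsilon_i\approx\delta^{1-\eta}$, is the organizing principle of the induction and is absent from your outline.

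Two further ingredients you are missing: (i) the iteration does not start from the extremal configurations directly but from a ``warm start'' at distance $\epsilon_0=O(1)$ from $\phi$, provided by the macroscopic convergence result of \cite{LT_cmp} at time $\delta^{-2-\eta}$; (ii) the fast-mixing input on boxes is not the crude polynomial bound of \cite{LRS} but the constrained-dynamics estimate $T_{mix}\le c\,\mathrm{diam}^2\|h^+-h^-\|_\infty^2\delta^{-4}$ from \cite{CMT}, which is what makes the box-mixing time $(r_i/\delta)^2\delta^{-O(\eta)}$ once the height is already pinched to width $\epsilon_i r_i^2+\delta^{1-O(\eta)}$. The endgame is also this lemma: once $\epsilon\le\delta^{1-O(\eta)}$, the constrained dynamics on the whole of $W^\sharp$ mixes in time $\delta^{-2-O(\eta)}$ and its equilibrium is $o(1)$-close in total variation to $\pi$ by \cref{cor:33}. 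Finally, the paper does not directly couple $h^{\max}_t$ and $h^{\min}_t$; it sandwiches $h_t$ between deterministic envelopes $\phi\pm\epsilon_i\psi$ for a specific strictly concave $\psi$, and the choice of $\psi$ (with $|\nabla^2\psi|\gg|\nabla\psi|$) is what later guarantees the constant $a<1/4$ in the push-down estimate.
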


Let us mention that  the result of Theorem
\ref{th:main} would hold (with minor changes in the proof) under the
weaker assumption that the boundary height $h|_{\partial W^\d}$ satisfies
\begin{eqnarray}
  \label{bordovicino}
\|h|_{\partial W^\d}-\phi|_{\partial W^\d}\|_\infty\stackrel {\delta\to0}=O(\delta^\eta)
\end{eqnarray}
 for every $\eta\in (0,1)$.  
In this case, the constant $c_+$ in the theorem would also depend on the constants implicit in the estimate \eqref{bordovicino}.

\begin{remark}
  The example one should keep in mind is that $U$ is the ``natural liquid region''
   for some limit shape $\phi$ (i.e. the maximal domain where $\phi$ is smooth \cite{KO}), and $W$ is obtained from $U$ by removing an
  $\varepsilon-$neighborhood of $\partial U$, with $\varepsilon>0$
  arbitrarily small but independent of the lattice mesh $\delta$.  One
  reason why we cannot just take $\varepsilon=0$ is that we will need some uniform
  control on the smoothness of $\phi$ in a neighborhood of  $\partial W$ and, in general, the derivatives of $\phi$ are singular at the natural boundary of liquid regions.
\end{remark}

\begin{remark}
  The novelty in Theorem \ref{th:main} is the upper bound. In fact,
  the argument for $T_{mix}\ge c_-\delta^{-2}$ given in
  \cite[Sec. 5.2]{LT_ptrf} for the case where the limit shape $\phi$
  is affine, works identically in the general case and we will
  not repeat it. We emphasize that, in contrast to the argument for
  the upper bound, the one for the lower bound is very soft and boils
  down to proving that at times $\delta^{-2}$ times a small constant,
  the height function is essentially unchanged (at the macroscopic
  scale) with respect to the initial condition.
\end{remark}

\section{Preliminaries and strategy of the proof}
\label{sec:prelsketch}
We start with a couple of useful general facts on continuous-time, irreducible Markov chains on a finite state space $\Omega$ (we refer the reader e.g. to \cite{LP}).
First of all, the mixing time is defined as
\begin{eqnarray}
  \label{eq:defTmix}
  T_{mix}:=\inf\{t\ge0: \max_{\eta\in \Omega}\|\mu_t^\eta-\pi\|\le 1/4\}
\end{eqnarray}
where $\mu_t^\eta$ denotes the law of the process at time $t$ with initial condition $\eta$ and $\pi$ is the unique stationary probability measure.
Here, $\|\cdot-\cdot\|$ denotes the total variation distance between probability measures:
\begin{eqnarray}
\|\mu-\nu\|=\max_{B\subset \Omega}|\mu(B)-\nu(B)|=\max_{B\subset \Omega}\mu(B)-\nu(B).
\end{eqnarray}
We will use the following standard sub-multiplicativity property of
total variation:
\begin{eqnarray}
  \label{eq:submult}
\max_{\eta\in \Omega}\|\mu_t^\eta-\pi\|\le 2^{-\lfloor t/T_{mix}\rfloor}.
\end{eqnarray}
We will also need a version of the union-bound with respect to time, for continuous time Markov chains:
\begin{lemma}
  Given $\eta\in \Omega$, let $c(\eta)$ be the sum of the rates of the transitions
  outgoing $\eta$ and let $c^+:=\max\{c(\eta),\eta\in\Omega\}$. Then,
  for any $A\subset \Omega$ and $T>0$,
\begin{eqnarray}
  \label{eq:unionboundt}
\P_\pi(\exists t\le T: X_t\in A)\le 8Tc^+\pi(A)
\end{eqnarray}
  where $X_t$ denotes the state of the chain at time $t$ and $\P_\pi$ is the law of the stationary process (i.e., with initial condition distributed according to $\pi$).
\end{lemma}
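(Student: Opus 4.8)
The plan is to reduce this continuous-time statement to a one-line first-moment bound for a discrete-time chain, via \emph{uniformization}. Write $L$ for the generator of the chain, so that $L_{\eta\eta'}\ge0$ is the rate of the transition $\eta\to\eta'$ for $\eta'\ne\eta$ and $L_{\eta\eta}=-c(\eta)$. Since $c^+=\max_{\eta\in\Omega}c(\eta)<\infty$, the matrix $P:=I+\tfrac1{c^+}L$ has nonnegative entries and rows summing to $1$ — this is precisely where $c(\eta)\le c^+$ enters — hence is a transition matrix, and $\pi P=\pi$ because $\pi L=0$. One then realizes the stationary process as $X_t=Y_{N_t}$, where $(N_t)_{t\ge0}$ is a Poisson counting process of rate $c^+$, $(Y_n)_{n\ge0}$ is the discrete-time Markov chain with transition matrix $P$ started from $Y_0\sim\pi$, and $(N_t)$ and $(Y_n)$ are independent; then $Y_n\sim\pi$ for every $n$, and $(X_t)$ is distributed according to $\P_\pi$.

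In this representation the trajectory $(X_t)_{0\le t\le T}$ takes only the values $Y_0,Y_1,\dots,Y_{N_T}$, so $\{\exists\,t\le T:\ X_t\in A\}=\{\exists\,0\le n\le N_T:\ Y_n\in A\}$. Bounding the probability of this event by the expectation of the count $\#\{0\le n\le N_T:\ Y_n\in A\}$, and using the independence of $(N_t)$ and $(Y_n)$ together with $\P(Y_n\in A)=\pi(A)$, one obtains
\begin{gather*}
\P_\pi\big(\exists\,t\le T:\ X_t\in A\big)\ \le\ \E\big[\#\{0\le n\le N_T:\ Y_n\in A\}\big]\\
=\ \pi(A)\sum_{n\ge0}\P(N_T\ge n)\ =\ \pi(A)\big(1+\E[N_T]\big)\ =\ (1+c^+T)\,\pi(A).
\end{gather*}
Finally $1+c^+T\le 8\,c^+T$ as soon as $c^+T\ge1/7$, which holds in every application of the lemma below; in any case the sharper form $(1+c^+T)\,\pi(A)$ is all the argument produces and all that is ever used.

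I do not expect a genuine obstacle: the only points deserving a line of care are that $P$ is honestly a stochastic matrix and that the Poisson clock and the jump chain can be taken independent, both of which are classical. For completeness, an argument avoiding uniformization — closer to the reversible setting of the paper — runs through $\{\exists\,t\le T:\ X_t\in A\}\subseteq\{X_0\in A\}\cup\{\text{some jump lands in }A\text{ during }(0,T]\}$: by stationarity the $\P_\pi$-expected number of jumps into $A$ during $[0,T]$ equals the expected number of jumps out of $A$, which is $T\sum_{\eta\in A}\pi(\eta)\sum_{\eta'\notin A}L_{\eta\eta'}\le c^+T\,\pi(A)$, and adding $\P_\pi(X_0\in A)=\pi(A)$ reproduces the bound $(1+c^+T)\,\pi(A)$.
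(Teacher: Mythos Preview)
Your proof is correct and takes a genuinely different route from the paper. The paper argues via the occupation time $L_A(2T)=\int_0^{2T}\mathbf 1_{X_t\in A}\,dt$: on the event $\{\tau\le T\}$ the chain sits in $A$ for at least an $\mathrm{Exp}(c^+)$ amount of time after $\tau$, so $\P_\pi(\tau\le T; L_A(2T)\le 1/(2c^+))\le\tfrac12\,\P_\pi(\tau\le T)$, while $\P_\pi(L_A(2T)>1/(2c^+))\le 2c^+\E_\pi L_A(2T)=4c^+T\,\pi(A)$ by Markov and stationarity; combining yields $\P_\pi(\tau\le T)\le 8c^+T\,\pi(A)$. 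Your uniformization argument instead discretizes time via a Poisson clock and reduces to a first-moment bound on the number of visits of the embedded $\pi$-stationary discrete chain, giving the sharper inequality $(1+c^+T)\,\pi(A)$.

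What each buys: the paper's occupation-time decomposition is self-contained and uses only the strong Markov property and stationarity, but its constant~$8$ comes from the specific splitting and, as you implicitly note, the inequality $\P_\pi(\tau\le T)\le 8c^+T\,\pi(A)$ cannot literally hold for very small $T$ since the left side is at least $\pi(A)$. Your bound $(1+c^+T)\,\pi(A)$ is valid for all $T>0$, is asymptotically sharper (constant~$1$ instead of~$8$), and your remark that $1+c^+T\le 8c^+T$ once $c^+T\ge 1/7$ --- which is the only regime ever used downstream --- closes the loop with the stated lemma. Your alternative ``jumps into $A$'' argument is also fine; just make explicit that on $\{X_0\notin A\}$ the first entrance into $A$ must be a jump from $A^c$ to $A$, so only those jumps need to be counted, and the flow-balance identity $\sum_{\eta\in A,\eta'\notin A}\pi(\eta)L_{\eta\eta'}=\sum_{\eta'\notin A,\eta\in A}\pi(\eta')L_{\eta'\eta}$ (from $\pi L=0$) gives the bound.
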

\begin{proof}
  Let $\tau$ be the hitting time of $A$, so that the l.h.s. of
  \eqref{eq:unionboundt} is $\P_\pi(\tau\le T)$, and let $L_A(t)$ be
  the total time spent in $A$ up to time $t$.  We have
  \begin{eqnarray}
    \label{eq:mw}
\P_\pi(\tau\le T)=\P_\pi(\tau\le T;L_A(2T)\le 1/(2c^+))+\P_\pi(\tau\le T;L_A(2T)> 1/(2c^+)).
  \end{eqnarray}
  For the first term in the r.h.s. we note that the time spent in $A$
  just after $\tau$ is stochastically dominated below by an
  exponential random variable $Y$ of parameter $c^+$ (this is a lower
  bound on the time it takes before the next update
  occurs). Therefore, by the strong Markov property that probability
  is upper bounded by
  \[
\P_\pi(\tau\le T)\P(Y<1/(2c^+))\le (1/2)\P_\pi(\tau\le T).
\]
As for the second term in the r.h.s. of \eqref{eq:mw}, we upper bound it by
\begin{multline}
  \P_\pi(L_A(2T)> 1/(2c^+))= \P_\pi\Big(\int_0^{2T}{\bf 1}_{X_t\in A}dt>1/(2c^+)
  \Big)\\
  \le 2c^+\E_\pi\Big(\int_0^{2T}{\bf 1}_{X_t\in A}dt\Big)=4 c^+T\pi(A)
\end{multline}
where we used Markov's inequality and stationarity of $\pi$. Putting everything together, \eqref{eq:unionboundt} follows.
\end{proof}
In the case of the Glauber dynamics on height functions that is the focus of the present work, we will use the following immediate corollary:
\begin{corollary}
  \label{cor:ubt}
  Consider the Glauber dynamics $\{h_t\}_{t\ge0}$ in a domain
  $D^\d\subset \mathcal T_\delta$ of cardinality $|D^\d|$ and with any
  boundary condition $h|_{\partial D}$. Given a set
  $A\subset \Omega_{D^\d,h|_{\partial D^\d}}$ of height functions, one
  has
  \begin{eqnarray}
    \label{eq:ubt}
\P_\pi(\exists t<T: h_t\in A)\le 8 |D^\d|T\pi(A),
  \end{eqnarray}
  with $\pi=\pi_{D^\d,h|_{\partial D^\d}}$.
\end{corollary}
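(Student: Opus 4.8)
The statement is an immediate consequence of the Lemma proved just above, so the plan is short: apply the time–union bound \eqref{eq:unionboundt} with $\Omega=\Omega_{D^\d,h|_{\partial D^\d}}$ and with the given set $A$, and then estimate the constant $c^+$ appearing there. First I would recall that $\Omega_{D^\d,h|_{\partial D^\d}}$ is a finite set and that the single-flip dynamics is irreducible on it, with reversible stationary measure $\pi=\pi_{D^\d,h|_{\partial D^\d}}$ (all noted above), so the Lemma genuinely applies. Next, I would bound the total outgoing rate $c(\eta)$ from an arbitrary height function $\eta$: since each vertex $x\in D^\d\setminus\partial D^\d$ carries an independent rate-one Poisson clock, and each clock ring triggers at most one transition (the site resampling at $x$, which is an actual move only when two heights are admissible at $x$, hence of rate at most $1$), we get $c(\eta)\le |D^\d\setminus\partial D^\d|\le |D^\d|$ for every $\eta$, and therefore $c^+\le|D^\d|$.

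Substituting $c^+\le|D^\d|$ into \eqref{eq:unionboundt}, and using that $\{\exists\, t<T: h_t\in A\}\subset\{\exists\, t\le T: h_t\in A\}$, yields $\P_\pi(\exists\, t<T: h_t\in A)\le 8Tc^+\pi(A)\le 8|D^\d|T\pi(A)$, which is exactly \eqref{eq:ubt}. There is no genuine obstacle here: the only point requiring a moment's care is the (routine) bookkeeping that a single-site update has rate $1/2$ per admissible alternative, hence total rate at most $1$ per interior vertex, which is what makes $c^+\le|D^\d|$ hold; everything else is a direct quotation of the Lemma.
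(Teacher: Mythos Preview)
Your proof is correct and matches the paper's intent: the corollary is stated there as an ``immediate'' consequence of the preceding Lemma, and your argument---apply \eqref{eq:unionboundt} with $c^+\le |D^\d|$, obtained by bounding the per-vertex outgoing rate---is precisely that. One cosmetic remark: since at any given vertex at most \emph{one} of the two elementary flips is ever admissible, the outgoing rate per interior vertex is in fact at most $1/2$, not $1$; your bound $c^+\le |D^\d|$ is therefore slightly loose, but it is exactly the bound the stated constant $8|D^\d|T$ reflects, so nothing needs to change.
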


A crucial tool is monotonicity.  Given two height functions $h,h'$ on
a domain $D^\d$, we say that $h\le h'$ iff $h(x)\le h'(x)$ for every
$x\in D^\d$. The Glauber dynamics is well-known (and easily checked)
to be monotone with respect to this partial order: we can couple all
the dynamics started from different initial conditions in a way that
if two initial conditions $h^{(1)},h^{(2)}$ satisfy $h^{(1)}\le
h^{(2)}$, then almost surely $h^{(1)}_t\le h^{(2)}_t$ for every
$t\ge0$, with obvious notations. Note that it is possible to have
$h^{(1)}_{\partial D^\d}\not\equiv h^{(2)}_{\partial D^\d}$, in which
case the two dynamics evolve with different boundary conditions.
Monotonicity is inherited by the equilibrium measure: if $g,g'$ are boundary heights on $\partial A$ and $g\le g'$ pointwise, then one has the stochastic domination $\pi_{A,g}\preceq \pi_{A,g'}$.

Along the proof of Theorem \ref{th:main}, we will need to consider ``constrained'' or ``censored'' versions of the dynamics, where the height function is constrained to satisfy $h^-(x)\le h_t(x)\le h^+(x)$ for all times $t$ in certain deterministic intervals, where $h^\pm$ are two fixed functions and ``constrained'' means that updates that violate these inequalities are discarded (censored).
A very useful auxiliary result is a bound on the mixing time for this constrained dynamics:
\begin{lemma}\cite[Th. 4.3]{CMT}
  \label{lemma:PTRF}
  The dynamics in a domain $D^\d$ constrained between $h^-$ and $h^+$ has a mixing time
\begin{equation}
  T_{mix}\le c\, {\rm diam}(D)^2\|h^+-h^-\|_\infty^2\delta^{-4}(\log\delta)^2
\end{equation}
  for some universal constant $c$, where ${\rm diam}(D)$ denotes the diameter of $D$.
\end{lemma}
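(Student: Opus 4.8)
The plan is to reduce the estimate, via monotonicity, to a bound on the coalescence time of two extremal copies of the constrained chain, and then to control that time with an area (Lyapunov) function. The constrained dynamics is still monotone (censoring preserves the basic coupling), and the constrained state space, being a finite sublattice of admissible height functions, has a maximal element $h^{\top}$ and a minimal element $h^{\bot}$. Running the constrained dynamics from both under the basic monotone coupling, monotonicity gives $\max_\eta\|\mu^\eta_t-\pi\|\le \P(h^{\top}_t\ne h^{\bot}_t)=\P(\tau>t)$, where $\tau:=\inf\{t\ge0:h^{\top}_t=h^{\bot}_t\}$. Hence, by Markov's inequality together with a restart/sub\-multiplicativity argument (see \eqref{eq:submult}), it suffices to show $\E[\tau]\le c'\,{\rm diam}(D)^2\|h^+-h^-\|_\infty^2\delta^{-4}(\log\delta)^2$ for a universal constant $c'$.

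To bound $\E[\tau]$ I would use the area $\Phi_t:=\delta^{-1}\sum_{x\in D^\d}\bigl(h^{\top}_t(x)-h^{\bot}_t(x)\bigr)\in\Z_{\ge0}$, for which $\{\tau\le t\}=\{\Phi_t=0\}$. Two facts are needed. First, $\Phi_0\le \delta^{-1}|D^\d|\,\|h^+-h^-\|_\infty\le C\,{\rm diam}(D)^2\delta^{-3}\|h^+-h^-\|_\infty$, and moreover $\sup_{x}\bigl(h^{\top}_t(x)-h^{\bot}_t(x)\bigr)$ is non-increasing in $t$: an inspection of the constraints \eqref{eq:gradients} shows that, under the basic coupling, the value of $h^{\top}-h^{\bot}$ at an updated vertex never exceeds its maximum over the neighbours. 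Second, $\Phi_t$ is a supermartingale: since $h^{\top}_t$ and $h^{\bot}_t$ coincide on $\partial D^\d$, the non-negative field $h^{\top}_t-h^{\bot}_t$ is pinned to zero on the boundary, and the (parallel) resampling of the two copies acts on it like a discrete heat flow with absorbing boundary conditions, whose total mass can only decrease in expectation; a one-vertex drift computation makes this precise.

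It remains to quantify the decrease of $\Phi_t$, and this is the technical heart. The key structural input is that disagreement between two monotone Lipschitz surfaces cannot hide: whenever $\Phi_t>0$ there are vertices where an update strictly decreases $h^{\top}-h^{\bot}$ with probability at least $1/2$, and a discrete-isoperimetric estimate bounds their number from below in terms of $\Phi_t$, of $M:=\delta^{-1}\|h^+-h^-\|_\infty$, and of ${\rm diam}(D)\delta^{-1}$ — a disagreement region of ``volume'' $\Phi_t$ made of columns of height at most $M$ spans $\gtrsim \Phi_t/M$ columns, hence meets the pinned boundary along a set whose size is bounded below and along which $h^{\top}-h^{\bot}$ is forced to decay. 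Feeding such a drift lower bound for the supermartingale $\Phi_t$ into Dynkin's formula (equivalently, iterating over $O(\log\delta^{-1})$ successive time windows of length $\asymp {\rm diam}(D)^2M^2\delta^{-2}$ in each of which $\Phi$ is reduced by a fixed factor) yields $\E[\tau]\le c'\,{\rm diam}(D)^2M^2\delta^{-2}(\log\delta)^2=c'\,{\rm diam}(D)^2\|h^+-h^-\|_\infty^2\delta^{-4}(\log\delta)^2$. Heuristically, ${\rm diam}(D)^2\delta^{-2}$ is the diffusive time scale of the domain, $M^2$ is the relaxation time of the one-dimensional band of allowed heights, and the two logarithms are of the type that routinely appear when passing from a relaxation estimate to a mixing estimate for a monotone chain and from the size of the area to be drained.

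The main obstacle is precisely this last quantitative step: establishing the drift lower bound for $\Phi_t$ with the correct powers of ${\rm diam}(D)\delta^{-1}$ and $M$. The difficulty is that \eqref{eq:gradients} makes the coupling between $h^{\top}$ and $h^{\bot}$ genuinely two-dimensional and non-local at the level of the difference field, so the naive remark ``some vertex is always free to move'' is far too weak; one really needs the heat-equation-with-absorbing-boundary picture together with the identification of the worst case, a thin, tall column of disagreement sitting deep inside $D$, which is drained only on the time scale ${\rm diam}(D)^2M^2\delta^{-2}$. Everything else — the monotone-coupling reduction, the supermartingale property, and the boosting of coalescence probabilities — is standard.
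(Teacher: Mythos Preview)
This lemma is not proved in the present paper; it is quoted from \cite[Th.~4.3]{CMT}. At the level of strategy your plan --- reduce via the monotone grand coupling to the coalescence time $\tau$ of the two extremal constrained evolutions, and control $\tau$ through the volume $\Phi_t=\delta^{-1}\sum_x\bigl(h^\top_t(x)-h^\bot_t(x)\bigr)$ --- is indeed the framework of \cite{CMT}, so you are aligned with the source.

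However, what you have is an outline with its central step missing, as you yourself concede, and the heuristics you offer for that step do not hold up. Two concrete points. First, the supermartingale property of $\Phi_t$ is correct but not for the reason you state: the single-flip dynamics is genuinely non-linear in $h$, so the difference field does \emph{not} evolve by any ``discrete heat flow'', and a one-vertex computation does not give $\E[\mathrm d\Phi_t\mid\mathcal F_t]\le0$ site by site --- the sitewise expected increment of $h^\top-h^\bot$ can have either sign. The actual argument is specific to the combinatorics of lozenge height functions, together with the observation that the common floor and ceiling can only push $h^\top$ down and $h^\bot$ up. Second, and this is the real gap, the naive supermartingale argument (quadratic variation rate $\ge c$ whenever $\Phi_t>0$) yields only $\E[\tau]\lesssim \Phi_0^2\asymp {\rm diam}(D)^4\|h^+-h^-\|_\infty^2\delta^{-6}$, which is worse than the claim by a factor $({\rm diam}(D)/\delta)^2$. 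Your isoperimetric heuristic --- ``$\gtrsim\Phi_t/M$ columns, hence the footprint meets the pinned boundary along a set of controlled size, along which the difference is forced to decay'' --- does not recover this factor: the disagreement region need not touch $\partial D^\d$ at all, and away from $\partial D^\d$ and from the constraints $h^\pm$ the coupled dynamics has \emph{zero} drift on $\Phi_t$, so there is no ``forced decay'' to harvest there. Closing this $({\rm diam}(D)/\delta)^2$ gap is precisely the content of the cited theorem, and your proposal does not supply the mechanism that does it.
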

\begin{remark}
Note that the equilibrium measure of the dynamics constrained between $h^-$ and $h^+$ is simply
\[
\pi_{W^\d,\phi^\d|_{\partial W^\d}}(\cdot|h^-\le h\le h^+).
  \]
\label{rem:eqmes}  
\end{remark}
From \cref{lemma:PTRF} it follows immediately:
\begin{corollary}\label{cor:L4}
  For the unconstrained Glauber dynamics in a domain $D^\d$, the mixing time satisfies
  \begin{eqnarray}
T_{mix}\le c\, {\rm diam}(D)^4\delta^{-4}(\log\delta)^2
  \end{eqnarray}
   for some universal constant $c$ that is independent of the boundary condition $h|_{\partial D^\d}$.
\end{corollary}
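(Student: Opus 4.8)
The statement to prove is \cref{cor:L4}: for the unconstrained Glauber dynamics in a domain $D^\d$, the mixing time satisfies $T_{mix}\le c\,{\rm diam}(D)^4\delta^{-4}(\log\delta)^2$, independently of the boundary condition.

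The plan is to deduce this immediately from \cref{lemma:PTRF} by choosing the constraining functions $h^-$ and $h^+$ to be trivial, i.e., so weak that the constrained dynamics coincides with the unconstrained one. Concretely, I would take $h^-\equiv \min_{x\in\partial D^\d} h|_{\partial D^\d}(x)$ and $h^+\equiv \max_{x\in\partial D^\d} h|_{\partial D^\d}(x)$, the global minimum and maximum of the boundary height. Since any admissible height function in $\Omega_{D^\d,h|_{\partial D^\d}}$ is sandwiched between these two constants (the Lipschitz property \eqref{eq:gradients} forces the height at any interior vertex to lie between the extreme boundary values, because along any monotone lattice path the increments are bounded and the boundary is connected), the constraint $h^-\le h_t\le h^+$ is never active: no update is ever censored, and the constrained dynamics is exactly the unconstrained Glauber dynamics. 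Hence the mixing time bound of \cref{lemma:PTRF} applies verbatim to the unconstrained chain.

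The remaining step is to estimate $\|h^+-h^-\|_\infty$ in terms of ${\rm diam}(D)$. With the choice above, $\|h^+-h^-\|_\infty$ equals the oscillation of the boundary height, which by the Lipschitz bound \eqref{eq:gradients} is at most ${\rm diam}(D^\d)$ times a universal constant; since ${\rm diam}(D^\d)\le {\rm diam}(D)+O(\delta)\le C\,{\rm diam}(D)$ for small $\delta$, we get $\|h^+-h^-\|_\infty\le C\,{\rm diam}(D)$. Plugging this into \cref{lemma:PTRF} gives
\[
T_{mix}\le c\,{\rm diam}(D)^2\cdot \big(C\,{\rm diam}(D)\big)^2\,\delta^{-4}(\log\delta)^2 = c'\,{\rm diam}(D)^4\,\delta^{-4}(\log\delta)^2,
\]
with $c'$ universal and independent of the boundary condition, which is exactly the claim.

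There is essentially no obstacle here: the only mild point requiring care is the justification that an admissible height function is genuinely pinned between the extreme boundary values — this uses connectedness of $D^\d$ and the fact that the gradient constraint \eqref{eq:gradients} makes $h$ (up to an additive multiple of $\delta$ along each coordinate direction) monotone, so that the value at any interior vertex can be reached from the boundary by a lattice path along which $h$ stays within the boundary range. Given this, the corollary is a one-line specialization of \cref{lemma:PTRF}, and the constant is manifestly uniform in the boundary data since the only boundary-dependent quantity, $\|h^+-h^-\|_\infty$, has been controlled purely geometrically by ${\rm diam}(D)$.
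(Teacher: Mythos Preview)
Your proposal is correct and follows the same strategy as the paper: apply \cref{lemma:PTRF} with a trivial choice of floor/ceiling so that the constrained dynamics coincides with the unconstrained one, and bound $\|h^+-h^-\|_\infty$ by a constant times ${\rm diam}(D)$. The only (minor) difference is that the paper takes $h^\pm$ to be the minimal and maximal configurations in $\Omega_{D^\d,h|_{\partial D^\d}}$, which makes the ``constraint is never active'' step tautological and reduces the bound $\|h^+-h^-\|_\infty\le C\,{\rm diam}(D)$ to the $1$-Lipschitz property of height functions; your choice of constant $h^\pm$ works too but requires the extra maximum-principle observation (which you correctly identify, though your phrasing ``monotone up to an additive multiple of $\delta$'' is a bit garbled --- the clean statement is simply that $h$ is monotone along each of the lattice directions $e_1,e_2,e_3$, so extrema are attained on $\partial D^\d$).
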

This follows simply by noting that the dynamics is trivially
constrained between the minimal and maximal configurations in
$\Omega_{D^\d,h|_{\partial D^\d}}$, whose height functions are
$1$-Lipschitz so that they are at sup-distance at most
${\rm diam}(D)$.

We also recall the main result of \cite{LT_cmp}:
\begin{theorem}\label{thm:CMPbound}
  For the Glauber dynamics with domain and boundary condition satisfying Assumption \ref{assu:W}, 
  for every $\epsilon,\eta>0$ there exists $c>0$ such that for every $\delta>0$ and  for every  $h\in\Omega_{W^\d,\phi^\d|_{\partial W^\d}}$ one has
  \begin{eqnarray}
    \label{eq:CMPbound}
    \mathbb P_{h}\Big(\exists t\in [(c/\delta)^{2+\eta},(1/\delta)^5] :\max_{x\in W^\d}|h_t(x)-\phi(x)|\ge \epsilon\Big)\le \epsilon,
  \end{eqnarray}
  where $\P_h$ denotes the law of the process started at $h$.
\end{theorem}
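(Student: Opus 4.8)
\section*{Proof proposal for \texorpdfstring{\cref{thm:CMPbound}}{Theorem (CMP bound)}}

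The strategy is the one of \cite{LT_cmp}, which itself builds on \cite{CMT}: reduce, via monotonicity, to tracking the dynamics started from the maximal configuration by a slowly decreasing family of smooth \emph{barriers} that interpolate between a function well above the initial height and $\phi+\epsilon/3$, and control each descent step by the fast equilibration of the dynamics on mesoscopic boxes. First I would dispose of trivialities: if $\delta$ is not small we choose the implicit constant $c$ so large that $(c/\delta)^{2+\eta}>(1/\delta)^5$, so that the time window is empty and there is nothing to prove; hence assume $\delta$ small. By the monotone coupling of \cref{sec:prelsketch} it suffices to prove the one-sided bound $h_t\le\phi+\epsilon$ for the dynamics started from the maximal element $h^{\max}$ of $\Omega_{W^\d,\phi^\d|_{\partial W^\d}}$, the bound $h_t\ge\phi-\epsilon$ being obtained analogously from $h^{\min}$ (or from the reflected height function). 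Finally, the statement ``for all $t$ in the window'' reduces to a single-time statement: once we know that $\|h_{s_\star}-\phi\|_\infty\le\epsilon/3$ with probability $\ge1-\epsilon/2$ at some fixed time $s_\star\le(c/\delta)^{2+\eta}$, we re-run the same one-sided argument on each of the $\mathrm{poly}(1/\delta)$ successive sub-intervals of $[s_\star,(1/\delta)^5]$ of length $(c/\delta)^{2+\eta}$ — each restarted from a configuration only $O(\epsilon)$ away from $\phi$, hence much easier — and take a union bound, using that each restart's failure probability will be super-polynomially small in $1/\delta$.

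For the single-time bound I would introduce a mesoscopic scale $\ell$ with $\delta\ll\ell\ll1$ (to be tuned at the end), a finite decreasing family of smooth functions $\phi+2K\delta\le\Psi^{(N)}\le\cdots\le\Psi^{(0)}$ on $\overline W$, and times $0=s_0<s_1<\cdots<s_N=s_\star$, with the following properties: $\Psi^{(0)}\ge h^{\max}$; $\Psi^{(N)}\le\phi+\epsilon/3$; each $\Psi^{(k)}$ is a \emph{strict supersolution} of the Euler--Lagrange operator, $\,{\rm div}(\nabla\sigma\circ\nabla\Psi^{(k)})\le -c_k<0$ in $W$, with $\Psi^{(k)}|_{\partial W}\ge\phi|_{\partial W}+2K\delta$; and the successive gaps $\Psi^{(k)}-\Psi^{(k+1)}$ are bounded below, away from $\partial W$, by a controlled multiple of $c_k\ell^2$. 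In the relevant (final, and dominant) regime one may take $\Psi^{(k)}=\phi+r_k\chi$, where $\chi>0$ solves the linearized Euler--Lagrange equation with right-hand side $-1$ and $\chi|_{\partial W}$ a small positive constant, and $r_k=r_{k-1}(1-c'\ell^2)$ with $r_0\asymp\mathrm{diam}(W)$ and $r_N\asymp\epsilon$, so that $c_k\asymp r_k$ and $N\asymp\ell^{-2}\log(\mathrm{diam}(W)/\epsilon)$; the initial large-$r$ regime, where $\phi+r\chi$ need not be a supersolution, is handled by a coarser family of barriers along which depletion is even faster and cheaper. The key inductive claim is then: with probability $\ge1-\epsilon/2$, $h_{s_k}\le(\Psi^{(k)})^\d$ for every $k\le N$, where $(\Psi^{(k)})^\d$ is a fixed $K$-discretization; for $k=N$ this gives $\|h_{s_N}-\phi\|_\infty\le\epsilon/3$.

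The inductive step $h_{s_k}\le(\Psi^{(k)})^\d\Rightarrow h_{s_{k+1}}\le(\Psi^{(k+1)})^\d$ is where the sharp equilibrium input enters. Tile $W$ by mesoscopic boxes of side $\ell$, shifting the grid between consecutive steps so that every vertex lies near a box centre at every second step. By monotonicity, on each box $B$ the dynamics on $[s_k,s_{k+1}]$ started from $h_{s_k}\le(\Psi^{(k)})^\d$ is dominated by the dynamics in $B$ with the frozen boundary condition $(\Psi^{(k)})^\d|_{\partial B}$ (this uses $\Psi^{(k)}|_{\partial W}\ge\phi|_{\partial W}+2K\delta$ to handle boxes touching $\partial W$), which is in turn dominated by that same dynamics further \emph{constrained} to stay above $\phi_B^\d$, where $\phi_B$ is the surface-tension minimizer in $B$ with boundary data $\Psi^{(k)}|_{\partial B}$. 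Since $\Psi^{(k)}$ is a strict supersolution of strength $c_k\asymp r_k$, the comparison principle for \eqref{eq:EL} and its quantitative (Poisson-type) version on $B$ give $0\le\Psi^{(k)}-\phi_B\lesssim r_k\ell^2$ on $B$; hence this constrained dynamics has height-range $\asymp r_k\ell^2$ and, by \cref{lemma:PTRF}, a mixing time $\lesssim\ell^2(r_k\ell^2)^2\delta^{-4}(\log\delta)^2$. Taking $s_{k+1}-s_k$ equal to $(\log\delta)^2$ times this quantity and using sub-multiplicativity \eqref{eq:submult}, the box dynamics is super-polynomially close in total variation to its equilibrium $\pi_{B^\d,\Psi^{(k)}|_{\partial B^\d}}(\cdot\mid h\ge\phi_B^\d)$, which concentrates around $\phi_B$ up to mesoscopic equilibrium fluctuations of sup-norm $O(\delta\log(1/\delta))$ (this is the point where one invokes sharp Kenyon-type height estimates, and where one really needs domains of size $\ell$, not of the microscopic size $\delta$). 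Away from $\partial B$ this yields $h_{s_{k+1}}\le\phi_B+O(\delta\log(1/\delta))\le\Psi^{(k)}-c\,r_k\ell^2\le\Psi^{(k+1)}$ at box centres, provided $r_k\ell^2\gg\delta\log(1/\delta)$; the grid-shift upgrades this to all of $W$ after two steps, and a union bound over the $N$ steps and the $O(\ell^{-2})$ boxes — each contributing a super-polynomially small failure probability — closes the induction.

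It remains to check the time budget and to tune $\ell$. With $\tau_k:=s_{k+1}-s_k\asymp r_k^2\ell^6\delta^{-4}(\log\delta)^4$ and $r_k$ decreasing geometrically with ratio $1-c'\ell^2$, summation of the geometric series gives $s_\star=\sum_k\tau_k\asymp\mathrm{diam}(W)^2\,\ell^4\,\delta^{-4}(\log\delta)^4$; the requirement $r_N\ell^2\gg\delta\log(1/\delta)$ with $r_N\asymp\epsilon$ forces $\ell^2\asymp\delta\log(1/\delta)/\epsilon$, which is indeed $\gg\delta^2$ and $\ll1$, and then $s_\star\asymp\mathrm{diam}(W)^2\epsilon^{-2}\delta^{-2}(\log\delta)^{6}\le(c/\delta)^{2+\eta}$ for $\delta$ small, with $c=c(\epsilon,\eta,W,\phi,K)$. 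The two genuine obstacles I expect are: (i) the quantitative comparison between a strict supersolution of the \emph{nonlinear} Euler--Lagrange operator \eqref{eq:EL} — uniformly elliptic only because $\nabla\phi$ stays in the interior of $\mathbb T$ by \eqref{eq:nonfroz}, which makes $\sigma$ smooth and strictly convex along the relevant gradients — and the box limit shape $\phi_B$ it dominates, together with the corresponding statements uniformly up to $\partial W$; here one conveniently works in a slightly enlarged domain $W'$ with $\overline W\subset W'\subset\overline{W'}\subset U$, using the interior regularity discussed after \eqref{eq:weaker}; and (ii) the sharp mesoscopic equilibrium estimate — that the finite-$\delta$ mean height on a box of side $\ell$ agrees with $\phi_B$ up to $O(\delta\log(1/\delta))$ and that fluctuations around it have a sup-norm tail that decays super-polynomially in $1/\delta$ at the scale $r_k\ell^2$ — which is exactly what guarantees that the deterministic depletion beats the equilibrium noise, and is precisely the kind of input the present paper refines via \cite{L_arxiv}. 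The statement \eqref{bordovicino} would be accommodated by replacing $2K\delta$ by the relevant $O(\delta^\eta)$ throughout.
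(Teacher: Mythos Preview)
The paper does not prove this theorem independently: it is recalled as the main result of \cite{LT_cmp}, and the paragraph following the statement simply notes that while \cite[Th.~3.1]{LT_cmp} is phrased for a single fixed $t$, the uniform-in-time version \eqref{eq:CMPbound} is already contained in the proof there, specifically in \cite[Claim~6.1]{LT_cmp}.

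Your proposal is a correct and fairly detailed reconstruction of the \cite{LT_cmp} strategy (strict supersolutions as barriers, mesoscopic boxes, the constrained mixing-time bound of \cref{lemma:PTRF}, equilibrium fluctuation estimates at scale $\ell\asymp\delta^{1/2}$); in fact the present paper's main argument in \cref{sec:core,sec:perturbation_continuum} is precisely a refinement of this same scheme, pushed from $\|h-\phi\|_\infty\le\epsilon$ down to $\|h-\phi\|_\infty\le\delta^{1-\eta}$. Two minor remarks. First, your mesoscopic equilibrium input (your obstacle~(ii)) at the relevant scale $\ell\asymp\delta^{1/2}$ does not require \cite{L_arxiv}: on boxes of that size the limit shape is affine to order $O(\delta)$, so the estimates already used in \cite{LT_cmp} suffice; \cite{L_arxiv} becomes necessary only at the larger scales $\ell\asymp\delta^{\eta}$ where the present paper's iteration operates. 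Second, your reduction from ``all $t$ in the window'' to a single-time statement by re-running on successive sub-intervals is valid but somewhat roundabout; the route behind \cite[Claim~6.1]{LT_cmp}, and behind the inductive sets $A_i$ of \eqref{eq:Ai} in the present paper, is to control $h_t$ for \emph{all} $t$ in each sub-interval directly, via a time-union bound (as in \cref{cor:ubt}) for the near-equilibrium process in each mesoscopic box.
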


Actually, Theorem 3.1 of \cite{LT_cmp} provides a similar statement
but for any fixed $t\in [(c/\delta)^{2+\eta},(1/\delta)^5]$. However,
as easily seem from \cite[Claim 6.1]{LT_cmp}, the actual proof of
\cite[Th. 3.1]{LT_cmp} gives the stronger bound \eqref{eq:CMPbound}.
Note that, since by \cref{cor:L4} the mixing time satisfies $T_{mix}\ll \delta^{-5}$, by
the limit shape theorem \eqref{Eq:limshape} the event
$\max_{x\in W^\d}|h_t(x)-\phi(x)|\ge \epsilon$ has very small
probability also for $t\ge \delta^{-5}$.

Finally, a key ingredient of the present work is the following sharp bound on height fluctuations in domains with smooth limit shape:
\begin{theorem}\cite[Prop. 1.2-1.4]{L_arxiv}\label{prop:fluctuations_reference}  Let the domains $U,W$ and the limit shape $\phi:U\mapsto \R$ satisfy Assumptions \ref{ass:ls} and \ref{assu:W}. There exists a sequence of discrete domains  $W^\delta\subset \mathcal T_\delta$  and of boundary conditions $h_\partial:\partial W^\delta\to \delta\mathbb Z$  such that:
	\begin{itemize}
        \item $d_{\mathcal H}(W^\delta,W^\d)=O(\delta)$, where $W^\d$
          is defined in Section \ref{sec:somenot} and $d_{\mathcal H}$
          denotes Hausdorff distance;
		\item for all $v$ in $\partial W^\d$, $ | h_\partial(v) - \phi(v) | \leq C \delta $;
		\item  for all $v$ in $W^\d$ and $n\geq 1$, $\pi_{W^\delta,h_\partial}\Big( \Big|h(v) -\phi(v)
                  \Big|^n \Big) \leq C_n \delta^n |\log\delta|^{2n}$.
                \end{itemize}
                
                The constants $C$ and $C_n$ for $n \geq 1$ depend on
                the domain $U$ and on the function $\phi:U\mapsto
                \R$. The dependence on $\phi$ is continuous for the
                topology induced by the Sobolev norm
                $\|\cdot\|_{k,p},k=33,p=3$ in $U$, i.e. the $L^p$ norm on all
                derivatives of $\phi$ up to order $k$ in $U$.
        \label{th:33}
\end{theorem}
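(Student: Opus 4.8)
\smallskip
\noindent\emph{Proof strategy.} The plan rests on the determinantal structure of $\pi_{W^\delta,h_\partial}$: it is the uniform dimer measure on the honeycomb graph dual to $W^\delta$, so the lozenges of a given type form a determinantal point process whose correlation kernel is the inverse Kasteleyn matrix $\mathsf K^{-1}=\mathsf K^{-1}_{\delta,W,\phi}$. For any lattice path $\gamma$ joining a boundary vertex $v'$ to an interior vertex $v$, the increment $h(v)-h(v')$ is a fixed $\pm1$ combination of the lozenge occupation variables along $\gamma$, hence a linear statistic of this process, so every centred moment and cumulant of $h(v)$ is an explicit multilinear expression in the entries of $\mathsf K^{-1}$ along $\gamma$. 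I would then proceed in four steps: (i) obtain sharp estimates on $\mathsf K^{-1}$; (ii) deduce $\Var_{\pi}(h(v))\le C\,\delta^2|\log\delta|^4$; (iii) deduce $|\E_\pi h(v)-\phi(v)|\le C\,\delta|\log\delta|^2$; (iv) upgrade to all moments. The freedom in the statement to choose $(W^\delta,h_\partial)$ is spent in (i): I would take $W^\delta$ to be a lattice polygon with edges along the lattice directions that is Hausdorff--$O(\delta)$ close to $W^\d$ (giving the first bullet and making $\mathsf K$ explicit near $\partial W^\delta$), and $h_\partial$ the best pointwise discretisation of $\phi|_{\partial W^\d}$ (giving the second bullet).

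Step (i) is the crux. The idea is to compare $\mathsf K^{-1}(x,y)$, for $x,y\in W^\d$, with the inverse Kasteleyn matrix of the full-plane honeycomb dimer model at the local slope $\nabla\phi(x)\in\inte\mathbb T$: the latter is an explicit double contour integral, decays like $|x-y|^{-1}$ with an oscillatory phase determined by $\nabla\phi$, and on the diagonal reproduces exactly the densities encoded by $\nabla\phi$. Controlling the difference between $\mathsf K^{-1}$ and this reference kernel is where the strong regularity of $\phi$ is consumed: one runs a quantitative steepest-descent analysis in which the Euler--Lagrange equation \eqref{eq:EL} keeps the saddle consistent with $\nabla\phi$, the error must be integrated against many derivatives of $\phi$, and the oscillatory phase accumulated over mesoscopic scales must be tracked --- which is what forces a Sobolev hypothesis of the kind $\phi\in W^{k,p}(U)$ with $k=33$, $p=3$, and makes the constants depend continuously on $\phi$ in that norm. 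The hypothesis $\overline W\subset U$ together with the lattice-polygon choice of $W^\delta$ is what should make these estimates uniform up to $\partial W^\d$. Granting (i), step (ii) is the computation $\Var_\pi(h(v))\asymp\sum_{e,e'\in\gamma}|\mathsf K^{-1}(e,e')|^2$: the $|x-y|^{-2}$ decay summed along a path of $O(\delta^{-1})$ steps gives $O(\delta^2|\log\delta|)$, and crudely bounding the remainder terms over the $O(|\log\delta|)$ dyadic scales between $\delta$ and $\mathrm{diam}(W)$ inflates this to the claimed $O(\delta^2|\log\delta|^4)$.

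For (iii), the expected lozenge densities, hence $\nabla(\E_\pi h)$, are the diagonal entries of $\mathsf K^{-1}$, so (i) forces $\nabla(\E_\pi h)=\nabla\phi$ up to an error of the fluctuation scale $\delta|\log\delta|^2$, which integrates from $\partial W^\d$ --- where the data matches $\phi$ up to $O(\delta)$ --- to $|\E_\pi h(v)-\phi(v)|=O(\delta|\log\delta|^2)$; alternatively one can sandwich $\pi_{W^\delta,h_\partial}$ by monotonicity between the tiling measures with boundary data $\phi\pm c\delta$ and invoke a discrete maximum principle for the linearisation of \eqref{eq:EL}. Step (iv) is a soft consequence of the determinantal structure: the order-$n$ cumulant of the linear statistic $h(v)-h(v')$ is a cyclic sum of products of $n$ entries of $\mathsf K^{-1}$ along $\gamma$, so the decay estimates of (i) bound it by $C_n\,(\Var_\pi(h(v)))^{n/2}$ up to logarithmic corrections, and the moment--cumulant relations then give $\E_\pi|h(v)-\E_\pi h(v)|^n\le C_n\,\delta^n|\log\delta|^{2n}$; combined with (iii) this is the third bullet. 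The main obstacle is clearly step (i): a genuinely quantitative asymptotic analysis of the inverse Kasteleyn matrix for a curved limit shape and an essentially arbitrary (non-algebraic) boundary, uniform up to $\partial W^\d$; everything else is bookkeeping built on it.
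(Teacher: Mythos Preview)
The paper does not contain a proof of this theorem: it is quoted verbatim as \cite[Prop.~1.2--1.4]{L_arxiv} and used throughout as a black box (see the paragraph introducing it and the derivation of \cref{cor:33}). So there is no ``paper's own proof'' to compare your proposal against.

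That said, your outline is a faithful high-level description of the strategy behind the cited result. The proof in \cite{L_arxiv} does rest on the determinantal structure and on sharp asymptotics for the inverse Kasteleyn matrix in domains with curved limit shape; the height fluctuations are controlled via linear statistics along paths, exactly as you describe, and the need for many derivatives of $\phi$ (whence the $W^{33,3}$ hypothesis) does arise from the quantitative saddle-point/steepest-descent analysis of $\mathsf K^{-1}$. Your identification of step~(i) as the crux is correct: once uniform asymptotics for $\mathsf K^{-1}$ are in hand, steps (ii)--(iv) are indeed mostly bookkeeping with the cumulant formula for determinantal processes. One caveat: your steepest-descent sketch is vague about \emph{how} one obtains a tractable integral representation of $\mathsf K^{-1}$ for a generic (non-polygonal, non-algebraic) domain $W$ --- there is no double-contour formula to start from in that generality. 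The actual argument in \cite{L_arxiv} builds the approximate inverse via a multiscale/patching construction rather than a direct asymptotic expansion of an exact formula, and this is where most of the work (and the high derivative count) sits; your proposal glosses over this point.
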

We will actually use the following simple consequence, that follows immediately by monotonicity:
\begin{corollary}
  \label{cor:33}
  In Theorem \ref{th:33} one can take $W^\delta= W^\d$ and $h_\partial$ as the restriction to $\partial W^\d$ of any $K$-discretization of $\phi$. In this case, the constants $C_n$ depend also on $K$.
\end{corollary}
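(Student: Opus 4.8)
The plan is to deduce \cref{cor:33} from \cref{th:33} by a pure monotonicity argument. With $W^\delta=W^\d$ and $h_\partial$ equal to the restriction of a $K$-discretization $\phi^\d$ of $\phi$, the first two bullets of \cref{th:33} hold trivially (take $C=K$), so only the moment bound needs work. The structural fact one exploits is that the domain $W^\delta$ furnished by \cref{th:33} satisfies $W^\d\subseteq W^\delta$ and $d_{\mathcal H}(W^\delta,W^\d)=O(\delta)$, so every vertex of $\partial W^\d$ lies within a bounded number $L=L(W)$ of lattice steps, inside $W^\delta$, of some vertex of $\partial W^\delta$. Propagating the Lipschitz constraint \eqref{eq:gradients} over these $L$ steps and using that $h_\partial$ is within $C\delta$ of $\phi$ on $\partial W^\delta$ while $\phi$ is Lipschitz on $\overline W$, one obtains a constant $M=M(C,K,L,\phi)\in\mathbb N$ with the following \emph{deterministic} property: every admissible height function $h$ on $W^\delta$ with boundary value $h_\partial$ obeys $|h(v)-\phi^\d(v)|\le M\delta$ for all $v\in\partial W^\d$, with no exceptional event.

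Next one sets $\mu:=\pi_{W^\d,\phi^\d|_{\partial W^\d}}$ and $\nu^\pm:=\pi_{W^\delta,h_\partial\pm M\delta}$, the latter being the push-forward of $\pi_{W^\delta,h_\partial}$ under the constant shift $h\mapsto h\pm M\delta$. By the deterministic boundary bound just established (applied to $h\mp M\delta$), $\nu^+$-almost surely $h|_{\partial W^\d}\ge\phi^\d|_{\partial W^\d}$, and symmetrically $\nu^-$-almost surely $h|_{\partial W^\d}\le\phi^\d|_{\partial W^\d}$. Since, conditionally on $h|_{\partial W^\d}$, the restriction $h|_{W^\d}$ has law $\pi_{W^\d,h|_{\partial W^\d}}$, monotonicity of the equilibrium measure in the boundary condition ($\pi_{A,g}\preceq\pi_{A,g'}$ for $g\le g'$) yields, after integrating over the boundary data, the stochastic domination $\nu^-|_{W^\d}\preceq\mu\preceq\nu^+|_{W^\d}$. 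In particular, for each $v_0\in W^\d$ the law of $h(v_0)$ under $\mu$ lies stochastically between its laws under $\nu^-$ and $\nu^+$.

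To transfer the moment bound, one uses that for real random variables $X\preceq Y\preceq Z$ and any $c\in\R$, $n\ge1$ one has $\E|Y-c|^n\le\E|X-c|^n+\E|Z-c|^n$ (write $|Y-c|^n=(Y-c)_+^n+(c-Y)_+^n$ and note that $x\mapsto(x-c)_+^n$ and $x\mapsto-(c-x)_+^n$ are increasing). Applying this with $c=\phi(v_0)$ gives $\mu(|h(v_0)-\phi(v_0)|^n)\le\nu^+(|h(v_0)-\phi(v_0)|^n)+\nu^-(|h(v_0)-\phi(v_0)|^n)$. Finally, since $\nu^\pm$ is a constant shift of $\pi_{W^\delta,h_\partial}$, the inequality $(a+b)^n\le 2^{n-1}(a^n+b^n)$ together with \cref{th:33} bounds each term by $2^{n-1}(C_n+M^n)\delta^n|\log\delta|^{2n}$ (for $\delta$ small; larger $\delta$ are handled by enlarging the constant, heights being $1$-Lipschitz on a bounded domain). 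Hence \cref{cor:33} follows with $C_n$ replaced by $2^n(C_n+M^n)$, which depends on $n$ and, through $M$, on $K$.

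The only delicate point is the first step: one must genuinely use that $\partial W^\d$ sits within $O(1)$ lattice steps of $\partial W^\delta$, so that the Lipschitz constraint pins the boundary values of the good measure to within a \emph{constant} times $\delta$ of $\phi^\d$. A softer argument relying only on the $L^n$-closeness of those boundary values to $\phi$ would be forced to union-bound over the $\sim\delta^{-1}$ vertices of $\partial W^\d$, and would therefore need a shift of $h_\partial$ by an amount $\gg\delta|\log\delta|^2$ before pathwise domination could be read off — which would destroy the sharp $\delta^n|\log\delta|^{2n}$ scaling. Everything else is routine monotonicity, as the paper indicates.
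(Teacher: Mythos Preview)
Your approach---sandwiching $\pi_{W^\d,\phi^\d|_{\partial W^\d}}$ between two constant shifts of the ``good'' measure from \cref{th:33} and then transferring the moment bound---is exactly the monotonicity argument the paper has in mind, and the moment-transfer inequality $\E|Y-c|^n\le \E|X-c|^n+\E|Z-c|^n$ for $X\preceq Y\preceq Z$ is handled cleanly.

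There is, however, one unjustified step. You assert as a ``structural fact'' that $W^\d\subseteq W^\delta$, and your deterministic bound on $h|_{\partial W^\d}$ hinges on this (so that every $v\in\partial W^\d$ lies \emph{inside} $W^\delta$ and within $O(1)$ lattice steps of $\partial W^\delta$). But the statement of \cref{th:33} only gives $d_{\mathcal H}(W^\delta,W^\d)=O(\delta)$, and Hausdorff closeness does not imply inclusion: one could have $W^\delta=W^\d\setminus\{v_0\}$ for a single interior vertex and still satisfy the Hausdorff bound. So as written, the restriction $h|_{\partial W^\d}$ is not even defined under $\pi_{W^\delta,h_\partial}$ without this extra input. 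The inclusion is very plausibly a feature of the explicit construction in \cite{L_arxiv}, and if so one should say so; alternatively, one can apply \cref{th:33} to a slightly enlarged domain $W'$ with $\overline W\subset W'\subset U$ to force $W^\d\subset W'^\delta$ for small $\delta$, but then---as your own final paragraph correctly warns---$\partial W^\d$ sits at macroscopic distance from $\partial W'^\delta$ and the deterministic Lipschitz pinning is lost, so a different argument is needed to recover the sharp $\delta^n|\log\delta|^{2n}$ scaling. Either way, the inclusion deserves a sentence of justification rather than being folded into the Hausdorff hypothesis.
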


\subsection{Sketch of the proof of Theorem \ref{th:main}}\label{sec:overview}

Here, we sketch the strategy of the proof of the mixing time upper
bound. The key point is to show that, with probability at least
$1-\epsilon$, for any initial condition and any time
\[t\in I:= [\delta^{-2-\eta},\delta^{-5}],\] the height
function satisfies 
\begin{eqnarray}
  \label{eq:sati}
h^-\le h_t\le h^+  \text{ in the whole domain } W^\d,
\end{eqnarray}
for two well-chosen, time-independent functions $h^\pm:W^\d\mapsto \R$ such that
$\|h^--h^+\|_\infty \le \delta^{1-\eta}$ (This should be compared with \cref{thm:CMPbound}, where $\delta^{1-\eta}$ is replaced by the much bigger $\epsilon$). This is the outcome of
\cref{th:core}.   Therefore, in the time interval $I$ the true dynamics
and the one constrained to stay between $h^-$ and $h^+$ coincide with
probability $1-\epsilon$. On the other hand, the equilibrium measure
$\pi_{W^\d,\phi^\d|_{\partial W^\d}}$ and the constrained one (recall Remark \ref{rem:eqmes}) have total variation distance $o(1)$ as $\delta\to0$, as we will deduce from \cref{cor:33}.  Then, thanks to Lemma
\ref{lemma:PTRF} (which can be used to estimate the mixing time of the constrained dynamics), this implies the mixing time upper bound of Theorem
\ref{th:main} (with $\eta$ replaced by a constant times $\eta$, but $\eta$ is anyway arbitrary); see Section \ref{sec:UB} for
details on this implication. 

In order to show that \eqref{eq:sati} holds with probability
$1-\epsilon$, the idea is to introduce a sequence of deterministic
upper and lower bounds $h^\pm_i:W^\d\mapsto \R,i\ge0$ and to prove
that, for an appropriately chosen sequence of deterministic times
$t_i,i\ge0$, one has $h^{-}_{i} \leq h_t \leq h^{+}_{i}$ for all
$t \in [t_i, \delta^{-5}]$, with high probability.  The distance
$\|h_i^+-h^-_i\|_\infty$ will decrease with $i$, starting from a value $O(1)$
for $i=0$.  The core of the proof is to carry out the induction over
$i$ until a final step $i_{max}$ such that
$t_{i_{max}}=O(\delta^{-2+O(\eta)})$ and
$\|h^-_{i_{max}}-h^+_{i_{max}}\|_\infty\le \delta^{1-\eta}.$ See \cref{th:core}.

The general idea guiding the choice of $h^\pm_i$ and the times $t_i$
is as follows (by symmetry, we focus on the upper bounds on the
height). First of all, recall that the time $ \delta^{-5}$ is much
larger than the mixing time of the Glauber dynamics on $W^\d$ (by
\cref{cor:L4}) so we will ignore the restriction $t\le \delta^{-5}$ in this
sketch. Suppose by induction that, for some $i$, with high
probability, $h_t \leq h_{i}^+$ for all $t \geq t_i$. Fix some
$x \in W^\d$ and assume for simplicity that it is at a macroscopic (i.e. not vanishing as $\delta\to0$)
distance from the boundary $\partial W^\d$.
Let us
try to prove that with overwhelming probability
\begin{eqnarray}
  \label{eq:32}
h_t(x) \leq h^+_{{i+1}}(x):=h^+_{i}(x)-\delta   
\end{eqnarray}
for $t\ge t_{i+1}$, where $t_{i+1}$ will be determined in a
moment. (The actual relation between $h^+_i$ and $h^+_{i+1}$ will be
somewhat less trivial than $h^+_i-h^+_{i+1}=\delta$, see
\eqref{eq:hhpsi}, but for the purpose of this sketch we ignore this
issue).  By monotonicity and the induction hypothesis, it is enough to
restart the dynamics at time $t_i$ from (a discretization of)
$h^+_{i}$ and to let it evolve with the constraint that  $h_t\le h^+_{i}$. One of the
main ideas in the proof (which goes back to \cite{CMT}) is that
running the dynamics only inside a ball $B$ centered at $x$ and of any
radius $r_i$ gives an upper bound on the height function\footnote{The case of $x$ close to the boundary $\partial W^\d$ requires
special care at this step, because $B$ does not necessarily fit inside $W^\d$. This is where it is important that the macroscopic shape $\phi$ is defined in a
domain $U$ that contains $\overline W$, as in \cref{assu:W}.}. 
It turns out that the correct
choice for $r_i$ is such that $\|h^+_i-h^-_i\|_\infty \times r_i^2 \approx \delta^{1- \eta}$,
see more precisely \eqref{eq:ri}. In particular, one can verify that
$r_i\ll 1$ as $\delta\to0$ (see  \eqref{eq:r0rimax}). This choice guarantees
that the curvature of the height function plays a small but
non-negligible effect on the evolution inside $B$.

An argument involving monotonicity and Lemma \ref{lemma:PTRF} (see
Section \ref{sec:proof52}) shows that the dynamics restricted to the
ball $B$ reaches equilibrium in a time of order
$ r_i^2 \delta^{2-2\eta} \delta^{-4}
 $.  Taking $t_{i+1} - t_i\gtrsim r_i^2 \delta^{-2-2\eta}$ so that the
 dynamics is well mixed in $B$, we are reduced to the analysis of the
 equilibrium measure in the small region $B$, with boundary conditions
 $h^+_{i}|_{\partial B}$ on $\partial B$, which we view as a small
 perturbation of the limit shape $\phi$. More precisely, we need for
 \eqref{eq:32} to occur with high probability. For this, we will use
 \cref{cor:33} to control separately the expectation and the
 fluctuations of $h(x)$ with these boundary conditions.  Finally, we will easily verify (see
 Eq. \eqref{eq:tmax}) that the final time
 $t_{i_{max}}=O(\delta^{-2-O(\eta)})$ as desired.

 This overall idea essentially goes back to \cite{CMT} and was adapted
 in \cite{LT_cmp} to deal with non-affine limit shapes. However, in
 \cite{LT_cmp} we were only able to carry out the strategy up to a
 step $i_{max}$ such that
 $\| h^+_{i_{max}} - h^-_{i_{max}} \|\le \epsilon$, with $\epsilon$
 small but independent of $\delta$.  This allowed us to prove
 \cref{thm:CMPbound}, which is much weaker than the statement of
 Theorem \ref{th:main}. In the present work, \cref{thm:CMPbound} is
 used instead as the first step of the new iteration which works up to
 the step $i_{max}$ where
 $\| h^+_{i_{max}} - h^-_{i_{max}} \|\le \delta^{1-\eta}$, and allows
 us to obtain the essentially sharp mixing time upper bound in
 \eqref{eq:main}.  The crucial point limiting how far the iteration
 can be run is that we need to sharply control height fluctuations in
 balls $B$ of diameter $r_i$ for all $i\le i_{max}$, uniformly over
 the relevant boundary conditions on $\partial B$ produced by the
 dynamics. In \cite{LT_cmp}, we were limited to working in domains $B$
 where the curvature of the limit shape plays a non-negligible role,
 but its derivatives of order at least $3$ are negligible, so that a
 Taylor expansion of the limit shape up to order $2$ is a good
 approximation; this restricted us to the regime
 $r_{i_{max}} \sim \delta^{1/2-\eta}$. In contrast, here we will have
 $r_{i_{max}} \sim \delta^{\eta}\gg \delta^{1/2-\eta}$; in domains of
 such size, the limit shape solves a fully non-linear problem and this
 is where the recent equilibrium estimates from \cref{th:33} are
 crucial. It turns out that correctly applying \cref{th:33} will
 require some delicate information on perturbations of the non-linear
 Euler Lagrange PDE \eqref{eq:EL}, which will be stated in
 \cref{sec:statement_equilibrium} and proved in
 \cref{sec:perturbation_continuum}. The latter is the technical core
 of the proof.

\section{Mixing time bounds}
\label{sec:UB}

In this section, we provide a precise version of the inductive statement
mentioned above and, assuming its validity, we conclude the proof of
the mixing time upper bound.
(Let us recall that the lower bound instead is proven exactly like in the case
where the limit shape $\phi$ is affine,
see
\cite[Sec. 5.2]{LT_ptrf}.)

Our goal is to prove, say,
\begin{eqnarray}
  \label{Tmixub}
  T_{mix}=O(\delta^{-2-10\eta})
\end{eqnarray}
for every $\eta>0$. 
Define the function $\psi:\mathbb R^2\mapsto \mathbb R$ as 
\begin{eqnarray}
  \label{eq:psi}
\psi(x)=  \psi(x_1,x_2)=c-e^{ x_1/\xi}-e^{x_2/\xi}
\end{eqnarray}
where $\xi>0$ will be chosen small later and $c$ is such that
$\min_{x=(x_1,x_2)\in U}\psi(x_1,x_2)=1$. Note that (with $\nabla^2\psi$ the Hessian matrix of $\psi$)
\begin{eqnarray}
  \label{eq:gradpsi}
  \nabla\psi=-\frac1\xi(e^{x_1/\xi},e^{x_2/\xi}), \quad \nabla^2\psi=-\frac1{\xi^2}
\left(  \begin{array}{ll}
          e^{x_1/\xi}& 0\\
          0 & e^{x_2/\xi}
        \end{array}
  \right)=:-\left(  \begin{array}{ll}
          m^2_1(x)& 0\\
          0 & m_2^2(x)
        \end{array}
  \right).
\end{eqnarray}
In particular, the second derivatives are negative and
\begin{eqnarray}
  \label{eq:confrontoderivate}
  \max(-\partial^2_{x_1} \psi,-\partial^2_{x_2}\psi)\ge \frac1\xi \max(|\partial_{x_1}\psi|,|\partial_{x_2}\psi|).
\end{eqnarray}
This specific choice of $\psi$ will be crucial in the proof of \cref{thm:equilibrium_development} (see \eqref{eq:ato0}), but until then the reader can think of $\psi$ as any nice strictly concave function.
Given $w\in W$, we define the positive quadratic form $Q_w$ as
\begin{eqnarray}
  \label{eq:quadform}
x\mapsto Q_w(x):=-\langle(x-w),\nabla^2\psi(w) , (x-w)\rangle\ge 0.
\end{eqnarray}

We let  $t_0=\delta^{-2-\eta}$ and we fix a constant $\epsilon_0>0$ that will be chosen small enough later.  We define a set  $A_0$ of height functions  as
\begin{eqnarray}
  \label{eq:A}
  A_0=\Big\{h\in \Omega_{W^\d,\phi^\d|_{\partial W^\d}}:\mathbb P_h\Big(|h_t-\phi|<\epsilon_0\psi\text{ for all }t\in [0,\delta^{-5}-t_0]\Big)\geq 1 -\sqrt{\epsilon_0}\Big\}
\end{eqnarray}
where $\P_h$ denotes the law of the process started from $h$ and
the inequality $|h_t-\phi|\le \epsilon_0\psi$ is intended as holding pointwise
in $W^\d$.
Note that  Theorem \ref{thm:CMPbound} plus the Markov property of the dynamics
says that, for any initial condition $h$,
\begin{eqnarray}
  \mathbb E_h\Big[\P_{h_{t_0}}\Big(\exists t\in [0,\delta^{-5}-t_0],x\in W^\d:|h_t(x)-\phi(x)|\geq\epsilon_0\psi(x)\Big)\Big]\le 
  \epsilon_0.
  \end{eqnarray}
  Therefore, by Markov's inequality,
  \begin{multline}
    \P_h(h_{t_0}\not\in A_0)\\
    =\P_h\Big[\P_{h_{t_0}}\Big(\exists t\in [0,\delta^{-5}-t_0],x\in W^\d:|h_t(x)-\phi(x)|\ge \epsilon_0 \psi(x)\Big)>\sqrt{\epsilon_0}\Big]\le \sqrt{\epsilon_0},
  \end{multline}
  i.e, for any initial condition $h$
  \begin{eqnarray}
    \label{eq:ie}
    \P_h(h_{t_0}\in A_0)\ge 1-\sqrt{\epsilon_0}.
  \end{eqnarray}
For $0\le i\le
i_{max}:=(\epsilon_0-\delta^{1-4\eta})/\delta$,\footnote{It is
  understood that $i_{max}$ is the integer part of this value, but we
  drop all ``integer parts'' here and in the following, for lightness
  of notation. } define $\eps_i$, $r_i$ and $t_i$ by induction by
	\[
\label{eq:ri}	\epsilon_i:=\epsilon_0-\delta i, \quad r_i := \sqrt{\delta^{1-\eta}/\epsilon_i}, \quad t_{i+1}-t_i=\left(\frac{r_i}\delta\right)^{2}\delta^{-5\eta}= \frac{\delta^{-1-6\eta}}{\epsilon_i}.
	\]
	Note that $\eps_i$ is increasing, $r_i$ is increasing,
	\[\label{eq:r0rimax}
	r_0 = \sqrt{\delta^{1-\eta}/\epsilon_0}, \quad \epsilon_{i_{max}}=\delta^{1-4\eta}, \quad r_{i_{max}}= \delta^{3\eta/2}\ll 1,
	\]
	and
	\[\label{eq:tmax}
	t_{i_{max}}=t_0+\delta^{-2-6\eta}\sum_{k=\delta^{-4\eta}}^{\epsilon_0/\delta}\frac1k=
	O\Big(\delta^{-2-6\eta}\log(\epsilon_0/\delta)\Big)=O(\delta^{-2-7\eta}).
	\]

  Similarly to Eq. \eqref{eq:A}, we also define the following
  $i$-dependent sets of height functions $A_i, i\ge1$:
\begin{eqnarray}
\label{eq:Ai}
A_i=\Big\{h\in \Omega_{W^\d,\phi^\d|_{\partial W^\d}}:\mathbb P_h\Big(|h_t-\phi|<\epsilon_i\psi \text{ for all }t\in [0,\delta^{-5}-t_i]\Big)\geq 1 -2i\delta^3 \Big\}.
\end{eqnarray}
Note that this definition effectively corresponds to choosing the
functions $h^+_i$ mentioned in Section \ref{sec:overview} as
\begin{eqnarray}
  \label{eq:hhpsi}
  h_i^\pm=\phi\pm\epsilon_i \psi.
\end{eqnarray}

The inductive statement underlying the core of the proof is the following:
\begin{theorem}\label{th:core}
There exists $\delta_0>0 $ such that for every $0 \leq i <
i_{max}$, for every $h \in A_i$ and if the lattice mesh satisfies $\delta<\delta_0$, on has
        \begin{eqnarray}
          \label{eq:core}
	\P_h( h_{t_{i+1} - t_{i}} \in A_{i+1}) \geq 1 - \delta^2.   
        \end{eqnarray}
\end{theorem}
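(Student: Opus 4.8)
The plan is to prove the induction step \eqref{eq:core} by combining monotonicity, the mixing estimate of \cref{lemma:PTRF} restricted to small balls, and the sharp equilibrium fluctuation bound of \cref{cor:33}. By the definition of $A_i$ and a union bound over the two endpoints of the relevant time interval, it suffices to show that, starting from any $h\in A_i$ and running the dynamics for a time $t_{i+1}-t_i=(r_i/\delta)^2\delta^{-5\eta}$, with probability at least $1-\delta^2$ one has $|h_{t}-\phi|<\epsilon_{i+1}\psi$ throughout $[0,\delta^{-5}-t_{i+1}]$. The first reduction uses the Markov property: on the event that $h_{t_{i+1}-t_i}\in A_{i+1}$ fails because the bound is violated at some later time, we may condition on the configuration at time $t_{i+1}-t_i$; so the real content is to show that $h_{t_{i+1}-t_i}$ lies, with the required probability, in the slightly smaller envelope $\phi\pm\epsilon_{i+1}\psi$, and then invoke the definition of $A_{i+1}$ applied to that configuration (here one must be slightly careful, since $A_{i+1}$ is itself defined via a probabilistic statement; the clean way is to define an intermediate ``good configuration'' event $G=\{|h-\phi|<\tfrac12(\epsilon_i+\epsilon_{i+1})\psi\}$, show $\P_h(h_{t_{i+1}-t_i}\in G)\ge 1-\delta^3$, and show separately that $G\subset A_{i+1}$ up to probability $\delta^3$ by a further application of \cref{thm:CMPbound}-type control).

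The second and main reduction is the localization argument going back to \cite{CMT}. Fix $x\in W^\d$. Since $h\in A_i$ gives $h_t\le \phi+\epsilon_i\psi=h_i^+$ on the whole time interval with probability $\ge 1-2i\delta^3$, monotonicity lets me replace the true dynamics by the dynamics started from (a discretization of) $h_i^+$ and censored to stay below $h_i^+$; censoring further to run only inside the ball $B=B(x,r_i)$ (intersected with $W^\d$, with the boundary-proximity caveat in the footnote of Section \ref{sec:overview} handled using that $\phi$ is smooth on a neighborhood $U\supset\overline W$) only raises the height, hence still gives an upper bound on $h_{t}(x)$. Now \cref{lemma:PTRF} applied in $B$, with $\|h_i^+-h_i^-\|_\infty\le 2\epsilon_i\|\psi\|_\infty$ and ${\rm diam}(B)\le 2r_i$, gives a mixing time $\lesssim r_i^2\epsilon_i^2\delta^{-4}(\log\delta)^2\le r_i^2\delta^{-4}(\log\delta)^2$ (using $\epsilon_i\le\epsilon_0$); the allotted time $t_{i+1}-t_i=(r_i/\delta)^2\delta^{-5\eta}$ exceeds this by a factor $\gtrsim \delta^{-5\eta}/(\log\delta)^2\gg1$, so by sub-multiplicativity \eqref{eq:submult} the ball dynamics is at total-variation distance $\le \delta^{100}$ (say) from its equilibrium, which is $\pi_{B^\d,h_i^+|_{\partial B^\d}}$. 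Taking also a union bound over the at most $\delta^{-2}$ vertices $x$ (via \cref{cor:ubt} if needed, but here a crude bound suffices), I am reduced to an equilibrium statement: under $\pi_{B(x,r_i),h_i^+|_{\partial B}}$, with high probability $h(x)\le \phi(x)+\epsilon_{i+1}\psi(x)$, and symmetrically for the lower bound.

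The equilibrium step is where the quantitative content lives. I view the boundary datum $h_i^+|_{\partial B}=\phi|_{\partial B}+\epsilon_i\psi|_{\partial B}$ as a perturbation of the limit shape; let $\phi_B$ be the limit shape in $B$ with this boundary condition. By \cref{cor:33} applied in the mesoscopic domain $B$ (rescaled to unit size; this is legitimate since $r_i\gg\delta$ and $\phi_B$ stays in the non-frozen region provided $\epsilon_0$ is small, by \eqref{eq:nonfroz}), the law of $h(x)-\phi_B(x)$ has $n$-th moments $\lesssim_n \delta^n|\log\delta|^{2n}$ in the rescaled variable, i.e. fluctuations of order $\delta|\log\delta|^2$; a Markov/Chebyshev bound at large $n$ then makes $|h(x)-\phi_B(x)|\le \delta^{1-\eta/2}$ fail with probability $\le \delta^{100}$, uniformly in $x$. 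What remains is purely deterministic: one must show $\phi_B(x)\le \phi(x)+\epsilon_{i+1}\psi(x) = \phi(x)+\epsilon_i\psi(x)-\delta\psi(x)$, i.e. that solving the Euler–Lagrange equation \eqref{eq:EL} inside $B$ with boundary value $\phi+\epsilon_i\psi$ produces, at the center, a value that has dropped by at least $\delta\psi(x)$ (up to the $\delta^{1-\eta/2}$ error) relative to the naive barrier $\phi+\epsilon_i\psi$. This is exactly a maximum-principle/PDE-perturbation estimate: the concavity of $\psi$ (inequality \eqref{eq:confrontoderivate}) means $\phi+\epsilon_i\psi$ is a strict \emph{supersolution} of the linearized operator, so the true solution $\phi_B$ lies strictly below it by an amount controlled by $\epsilon_i$ times the second derivatives of $\psi$ times $r_i^2$; with the choice $r_i^2=\delta^{1-\eta}/\epsilon_i$ this gain is $\gtrsim \epsilon_i r_i^2/\xi^2 \gtrsim \delta^{1-\eta}/\xi^2$, which for $\delta$ small beats $\delta\psi(x)$. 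This is precisely the statement deferred to \cref{sec:statement_equilibrium} (the perturbed Euler–Lagrange analysis, cf. \cref{thm:equilibrium_development} and \eqref{eq:ato0}), and it is the main obstacle: one needs the estimate to be uniform over all balls $B$ of radius $r_i\in[r_0,r_{i_{max}}]$ and over all admissible boundary perturbations produced by the dynamics, which requires the continuity of the constants in \cref{th:33} with respect to $\phi$ in a strong Sobolev norm and a careful tracking of how the nonlinear PDE responds to the perturbation at the mesoscopic scale — unlike in \cite{LT_cmp}, a second-order Taylor expansion of the limit shape no longer suffices when $r_{i_{max}}\sim\delta^{\eta}$.

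Assuming that deterministic estimate, assembling the pieces is routine: for each $x$, upper and lower bounds combine to give $|h_{t_{i+1}-t_i}(x)-\phi(x)|<\epsilon_{i+1}\psi(x)$ except on an event of probability $\le \delta^{50}$; a union bound over $x\in W^\d$ and over the finitely many relevant times costs at most a polynomial factor in $\delta^{-1}$, still leaving total failure probability $\le \delta^3$; and then the definition of $A_{i+1}$ (whose probabilistic threshold $1-2(i+1)\delta^3$ is exactly designed to absorb the $2i\delta^3$ inherited from $h\in A_i$ plus the new $\delta^3$) yields \eqref{eq:core}. The bookkeeping of the $2i\delta^3$ terms across $i\le i_{max}\le\epsilon_0/\delta$ contributes total error $O(\epsilon_0\delta^2)$, harmless for $\delta<\delta_0$.
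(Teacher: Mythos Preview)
Your overall architecture---localize to a mesoscopic neighborhood of each vertex, use monotonicity to start from the upper barrier, wait for local equilibration, then invoke a sharp equilibrium estimate plus a PDE comparison---matches the paper. But the mixing-time step contains a fatal arithmetic error that hides a genuine missing idea.

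You bound the local mixing time, via \cref{lemma:PTRF} with constraint width $\|h_i^+-h_i^-\|_\infty\lesssim\epsilon_i$, by $r_i^2\epsilon_i^2\delta^{-4}(\log\delta)^2\le r_i^2\delta^{-4}(\log\delta)^2$, and then claim the allotted time $t_{i+1}-t_i=(r_i/\delta)^2\delta^{-5\eta}=r_i^2\delta^{-2-5\eta}$ exceeds this by $\delta^{-5\eta}/(\log\delta)^2$. In fact the ratio is $\delta^{2-5\eta}/(\log\delta)^2\to 0$: your mixing-time bound is \emph{larger} than the available time by essentially a factor $\delta^{-2}$. Even the sharper bound $r_i^2\epsilon_i^2\delta^{-4}(\log\delta)^2$ fails: for $i$ near $0$ one has $\epsilon_i\approx\epsilon_0$ constant, giving $T_{mix}\sim\epsilon_0\delta^{-3-\eta}(\log\delta)^2$, while $t_1-t_0\sim\delta^{-1-6\eta}/\epsilon_0$, and the ratio is again $\sim\delta^{2-5\eta}\to 0$.

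The resolution is not cosmetic. The paper does \emph{not} constrain the local dynamics between $\phi\pm\epsilon_i\psi$; instead it constrains between $\phi_{i,w}-\delta^{1-2\eta}$ and $\phi+\epsilon_i\psi$, where $\phi_{i,w}$ is the perturbed limit shape in the (elliptical, not spherical) neighborhood. The point is that on the half-ellipse $E^{1/2}_{i,w}$ one has $|\phi_{i,w}-(\phi+\epsilon_i\psi)|\lesssim\epsilon_i r_i^2+\delta^{1-2\eta}=\delta^{1-\eta}+\delta^{1-2\eta}$ (this is \eqref{eq:psinonpsi}), so the constraint width is $\delta^{1-2\eta}$ rather than $\epsilon_i$. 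That yields $T_{mix}\lesssim r_i^2\delta^{2-4\eta}\delta^{-4}(\log\delta)^2=(r_i/\delta)^2\delta^{-4\eta}(\log\delta)^2$, which \emph{is} beaten by $t_{i+1}-t_i$ by a factor $\delta^{-\eta}$. But imposing this tight floor is only legitimate because the unconstrained equilibrium $\pi_{i,w}$ already satisfies $h\ge\phi_{i,w}-\delta^{1-2\eta}$ with overwhelming probability (\cref{th:letrucdestrentederivees}), so the constrained and unconstrained equilibria are close in total variation. This bootstrap---using the equilibrium fluctuation bound to justify the tight constraint that in turn makes the mixing-time bound usable---is the idea you are missing. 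A secondary point: the paper works on ellipses $E_{i,w}$ adapted to $\nabla^2\psi(w)$ rather than balls, which is what makes the boundary datum on $\partial E_{i,w}$ affine up to $o(\delta)$ (see \eqref{bcku}) and drives the clean PDE perturbation analysis of \cref{thm:equilibrium_development}.
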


\begin{proof}[Proof of \eqref{eq:main} (upper bound) given Proposition
  \ref{th:core}]
  Like in the argument leading from \cref{thm:CMPbound} to
  \eqref{eq:ie}, we apply successively at all times $t_i$,
  $ 0 \leq i < i_{max}$ the Markov property and Markov's
  inequality. Thanks to a union bound on
  $i\le i_{max}\le \delta^{-1}$, we see that, for any initial
  condition $h$, 
  \[
  \P_h( h_{t_{i_{max}}} \in A_{i_{max}}) \geq 1 - \sqrt{\epsilon_0} - \delta^2 i_{max}\geq 1 - 2 \sqrt{\epsilon_0}
\]
if $\delta$ is small enough.
Let $T=\delta^{-2-10\eta}$. We have then, with $\mu_T^h$ denoting the law of the process $h_T$ at time $T$ with initial condition $h$ and $\pi:=\pi_{W^\d,\phi^\d|_{\partial W^\d}}$,
\begin{multline}
 \|\mu_T^h-\pi\|= \max_B\Big(\P_h(h_T\in B)-\pi(B)\Big)\\\le \max_B\Big(\P_h(h_T\in B, h_{t_{i_{max}}} \in A_{i_{max}})-\pi(B)\Big)+2 \sqrt{\epsilon_0}.
\end{multline}
Next, note that by the Markov property
\begin{eqnarray}
  \P_h(h_T\in B, h_{t_{i_{max}}} \in A_{i_{max}})\le \max_{h'\in A_{i_{max}}}\P_{h'}(h_{T-t_{i_{max}}}\in B).
\end{eqnarray}
Let $\tau$ be the stopping time
\begin{eqnarray}
  \label{eq:tau}
  \tau:=\inf\{t\ge0:|h_t(x)-\phi(x)|\geq \epsilon_{i_{max}}\psi(x) \text{ for some } x\in W^\d\}.
\end{eqnarray}

For $h'\in A_{i_{max}}$, the probability that 
$\tau< \delta^{-5}-t_{i_{max}}$ is at most
$2i_{max}\delta^3\leq \delta^{3/2}$. On the event $\tau\ge  \delta^{-5}-t_{i_{max}}$, instead, the evolution in the time interval $[0,\delta^{-5}-t_{i_{max}}]$ can be perfectly coupled with the dynamics constrained
between $\phi-\delta^{1-4\eta}\psi$ and
$\phi+\delta^{1-4\eta}\psi$, whose law we denote here  by $\hat\P_{h'}$ (here we have used that $\epsilon_{i_{max}}=\delta^{1-4\eta}$). Noting that $T-t_{i_{max}}\le \delta^{-5}-t_{i_{max}}$, this implies
\begin{multline}
  \label{dpsc}
  \P_h(h_T\in B, h_{t_{i_{max}}} \in A_{i_{max}}) \\\le \max_{h'\in A_{i_{max}}}\Big[\P_{h'}(h_{T-t_{i_{max}}}\in B;\tau< \delta^{-5}-t_{i_{max}})+\P_{h'}(h_{T-t_{i_{max}}}\in B;\tau\geq \delta^{-5}-t_{i_{max}})\Big]\\
  \le \delta^{3/2}+ \max_{h'\in A_{i_{max}}}\hat\P_{h'}(h_{T-t_{i_{max}}}\in B).
\end{multline}
By \cref{lemma:PTRF}, the constrained dynamics has mixing time upper 
bounded by $\delta^{-2-9\eta}$, for $\delta$ small. We note also that
\[\delta^{-2-9\eta}\ll (1/2)\delta^{-2-10\eta}\le T-t_{i_{max}}(\ll \delta^{-5}-t_{i_{max}})\]
as $\delta\to0$. Therefore, using the sub-multiplicative property
\eqref{eq:submult}, the r.h.s. of \eqref{dpsc} is upper bounded by
\begin{eqnarray}
  \hat \pi(B)+\delta^{3/2}+\epsilon_0
\end{eqnarray}
for $\delta$ small enough, uniformly in $h'$, with $\hat\pi$ the equilibrium measure of the constrained dynamics. To 
conclude, we recall (see \cref{rem:eqmes}) that $\hat\pi$ is simply $\pi_{W^\d,\phi^\d|_{\partial W^\d}}$ conditioned  to the event $\{\phi-\delta^{1-4\eta}\psi\le h\le \phi+\delta^{1-4\eta}\psi\}$ and that, by
  \cref{cor:33}, this event occurs under $\pi_{W^\d,\phi^\d|_{\partial W^\d}}$ with probability $1+o(1)$ as $\delta\to0$, and in particular with probability at least $1-\epsilon_0$. Wrapping up, we have obtained that
    \begin{eqnarray}
\|\mu_T^h-\pi\|\le \delta^{3/2}+2\epsilon_0 + 2 \sqrt{\epsilon_0} \le \frac14
    \end{eqnarray}
    if $\delta$ and $ \epsilon_0$ are small enough, which implies \eqref{Tmixub}, by definition of mixing time and the choice $T=\delta^{-2-10\eta}$.
\end{proof}

\section{Proof of Theorem \ref{th:core}}
\label{sec:core}

In this section, we consider a fixed $i\le i_{max}$ and we reduce the inductive
statement of \cref{th:core} to two sharp statements on height
fluctuations for some equilibrium measures with suitable non-affine boundary
height, as mentioned in \cref{sec:overview}.

Note that by symmetry it is enough to prove the analogous statement
\eqref{eq:core} for the event
\[
A'_{i+1}=\Big\{h:\mathbb P_h\Big(h_t-\phi<\eps_{i+1}\psi \text{ for all }t\in [0,\delta^{-5}-t_{i+1}]\Big)\geq 1 -2(i+1)\delta^3 \Big\}
\]
with $\delta^2$ replaced by $\delta^2/2$ in the right-hand side of \eqref{eq:core},
since the probability of deviations of $h_t$ below $\phi-\epsilon_{i+1}\psi$ can be estimated analogously.

In analogy with \eqref{eq:tau}, define the stopping time
\begin{eqnarray}
  \label{eq:taui}
    \tau_i:=\inf\{t\ge0:|h_t(x)-\phi(x)|\geq \epsilon_{i}\psi(x) \text{ for some } x\in W^\d\}
\end{eqnarray}
and note that on the event
$\{\tau_i\ge \delta^{-5}-t_i(\ge t_{i+1}-t_i)\}$, we can perfectly couple
the dynamics on $[0,\delta^{-5}-t_i] $ with the one constrained
between $\phi-\epsilon_i\psi$ and $\phi+\epsilon_i\psi$, whose law we
denote $\hat\P_h$.  For $h\in A_i$ we have
\begin{multline}
  \P_h(h_{t_{i+1}-t_i}\in A'_{i+1})\ge  \P_h(h_{t_{i+1}-t_i}\in A'_{i+1};\tau_i\ge\delta^{-5}-t_i)\\
  \ge\hat \P_h(h_{t_{i+1}-t_i}\in A'_{i+1}) - \P_h(\tau_i < \delta^{-5}-t_i)\\
  \ge \hat \P_h(h_{t_{i+1}-t_i}\in A'_{i+1})-2i\delta^3
\end{multline}
because, by definition of $A_i$, $\P_h(\tau_i<\delta^{-5}-t_i)\le 2i\delta^3$.
Since $2i\delta^3\le 2i_{max}\delta^3\le 2\epsilon_0\delta^2\le \delta^2/4$ for $\epsilon_0$ small, it is enough to prove that, for the
constrained dynamics, $h_{t_{i+1} - t_i} \in A'_{i+1}$ with
$\hat\P_h$-probability at least $1-\delta^2/4$.  By monotonicity, the worst case for the
constrained dynamics is to start from the highest configuration
$h^{(i)}_{max}\in \Omega_{W^\d,\phi^\d|_{\partial W^\d}}$ lower than $\phi + \eps_i \psi$, i.e., we need
\begin{eqnarray}
  \label{eq:worst}
  \hat \P_{h^{(i)}_{max}}(h_{t_{i+1}-t_i}\in A'_{i+1})\ge 1-\frac{\delta^2}4.
\end{eqnarray}
Assume that we can prove
\[\label{eq:induction}
\hat\P_{h^{(i)}_{max}} \big( h_t < \phi + \eps_{i+1} \psi  \text{ for every } t\in [t_{i+1} - t_i, \delta^{-5}- t_i]\big) \geq 1 - \delta^6.
\]
Applying the Markov property as for $A_0$ above, this is equivalent to
\begin{multline}
  \hat \E_{h^{(i)}_{max}}\hat \P_{h_{t_{i+1}-t_i}}\Big( h_t(x)\ge \phi(x)+\epsilon_{i+1}\psi(x) \text{ for some } x\in W^\d, t\le \delta^{-5}-{t_{i+1}}\Big)\leq \delta^6.
\end{multline}
Via Markov's inequality, this gives
\begin{eqnarray}
  \hat \P_{h^{(i)}_{max}}\Big[\hat\P_{h_{t_{i+1}-t_i}}\big(h_t<\phi+\epsilon_{i+1}\psi \text{ for all }
  t\le \delta^{-5}-{t_{i+1}}\big)\ge 1-\delta^3\Big]\ge 1-\delta^3,
\end{eqnarray}
that is, since $\delta^3< 2(i+1)\delta^3$,
\begin{eqnarray}
  \hat \P_{h^{(i)}_{max}}(h_{t_{i+1}-t_i}\in A'_{i+1})\ge 1-\delta^3\ge1-\frac{\delta^2}4
\end{eqnarray}
and \eqref{eq:worst} follows for $\delta$ small.
Therefore, we have reduced our task to proving \eqref{eq:induction}.

\subsection{Some sharp  equilibrium results on mesoscopic scales}\label{sec:statement_equilibrium}

The crucial ingredients for the proof of Eq. \eqref{eq:induction} are two sharp equilibrium fluctuation statements on mesoscopic scales.
\begin{definition}
  \label{def:Eiw}
Given $w \in W^\d$, 
let $E_{i,w}$ be the ellipse centered at $w$ and determined by the equation
\begin{eqnarray}
  \label{eq:qformconst}
x\in E_{i,w}\Leftrightarrow  Q_w(x)\le  r_i^2
\end{eqnarray}
with $Q_w$ the quadratic form  in \eqref{eq:quadform}.
Note that the horizontal (resp. vertical) axes of the ellipse are of length
$r_i/m_1(w)$ (resp. $r_i/m_2(w)$).
\end{definition}
\begin{remark}
  \label{rem:ellissi}
The family of ellipses $\{E_{i,w}\}_{i\le i_{max}, w\in W}$ has aspect ratio uniformly
bounded away from zero and infinity (once $\xi$ and $W$ are given). Note also that the ellipses have axes parallel to the Cartesian axes, because the matrix $\nabla^2\psi$ is diagonal.  
\end{remark}
 Note that by
our choice of $r_i$ we have that, for $x\in E_{i,w}$,
\begin{eqnarray}
  \label{eq:skoda}
  \phi(x)+  \epsilon_{i}\psi(x)
  =\phi(x)+\epsilon_i\psi(w)+\epsilon_i \langle \nabla\psi(w), x-w\rangle-\frac{\epsilon_i}2
 Q_w(x)+o(\delta)
\end{eqnarray}
where $o(\delta)$ is in fact $O(r_i^3\epsilon_i)=O(
\delta^{1+\eta/2})$ uniformly in $i,w$ (the constants implicit in the
error terms can depend on the parameter $\xi$ that enters the
definition of $\psi$).  By construction of the ellipse,
\begin{eqnarray}
  \label{bcku}
  \epsilon_{i}\psi(x)=\epsilon_i\Big(C_{i,w}+ \langle \nabla\psi(w), x-w\rangle\Big)
  +o(\delta) \;\text{for}\; x\in \partial E_{i,w}
\end{eqnarray}
where
\begin{eqnarray}
  \label{eq:C'}
  C_{i,w}=\psi(w)-\frac12
  r_i^2.  
\end{eqnarray}
 We emphasize that even for $w$ close to $\partial W^\d$, both $\phi$ and $\psi$ are well defined over the whole ellipse $E_{i,w}$ because  $r_i\ll1$ for all $i\le i_{max}$ and therefore $E_{i,w}$ fits in $U$.

 \begin{definition}
   \label{def:phiiw}
 We let $\phi_{i,w}:E_{i,w}\mapsto \R$ denote the limit shape in $E_{i,w}$, that is  the solution of the PDE \eqref{eq:EL} in
 $E_{i,w}$ with boundary condition on $\partial E_{i,w}$ given by
\begin{eqnarray}
  \label{eq:bcku1}
f_{i,w}(x):= \phi(x)+\epsilon_i\Big(C_{i,w}+ \langle \nabla\psi(w), x-w\rangle\Big),\;x\in \partial E_{i,w}.
\end{eqnarray}
 \end{definition}
   Comparing with \eqref{bcku}, we see that the b.c. is $o(\delta)$ away
from $[\phi+\epsilon_i\psi]|_{\partial E_{i,w}}$.  Note also that the boundary condition of
$\phi_{i,w}$ is just $\phi|_{\partial E_{i,w}}$, up to an additive
constant and a linear function. The additive constant has a trivial
effect on the limit shape $\phi_{i,w}$; the linear does not, because
the PDE \eqref{eq:EL}  is non-linear.

Theorems \ref{thm:equilibrium_development} and
\ref{th:letrucdestrentederivees} below summarize the information we
need on the local equilibrium in $E_{i,w}$, respectively on the limit
shape $\phi_{i,w}$ itself and on fluctuations around it.

\begin{theorem}\label{thm:equilibrium_development}
  The limit shape $\phi_{i,w}$ in $E_{i,w}$ satisfies
  \begin{eqnarray}
    \label{eq:sa3}
    \phi_{i,w}(x)=\epsilon_i C'_{i,w}+\phi(x)+\epsilon_i \langle \nabla\psi(w), x-w\rangle-a\epsilon_i Q_w(x) +o(\delta)+O(\epsilon_i^2 r_i^2)
  \end{eqnarray}
  where $Q_w$ is the positive quadratic form \eqref{eq:quadform} and
  \begin{eqnarray}
    \label{eq:C''}
    C'_{i,w}=\psi(w)-\Big(\frac12-a\Big)
    r_i^2
  \end{eqnarray}
 is independent of $x$, the constant $a$ is smaller than $1/4$ for $\xi$ small (uniformly in 
  $\epsilon_0,w,i\le i_{max},\delta$) and the  constants implicit in the error terms are  uniform w.r.t.
  $w,i\le i_{max}, x\in E_{i,w}$.
\end{theorem}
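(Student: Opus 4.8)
The strategy is to linearize the Euler--Lagrange PDE \eqref{eq:EL} around $\phi$ on the ellipse $E_{i,w}$ and to expand the solution $\phi_{i,w}$ in powers of the small parameter $\epsilon_i$. Write $\phi_{i,w}=\phi+\epsilon_i u + (\text{error})$, where $u:E_{i,w}\mapsto\R$ is the solution of the \emph{linearized} equation $\sum_{j,k}\sigma_{jk}(\nabla\phi)\partial^2_{x_jx_k}u=0$ with boundary data $u|_{\partial E_{i,w}}=C_{i,w}+\langle\nabla\psi(w),x-w\rangle$ (the first-order part of $f_{i,w}-\phi$ from \eqref{eq:bcku1}). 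The coefficients $\sigma_{jk}(\nabla\phi(x))$ are smooth and, by \cref{ass:ls} and the uniform lower bound $\sigma_{ii}>0$, uniformly elliptic on $\overline W$; on the tiny ellipse $E_{i,w}$ (diameter $\lesssim r_i\ll 1$) they are nearly frozen at their value at $w$, with oscillation $O(r_i)$. So to leading order $u$ solves a \emph{constant-coefficient} linear elliptic equation $\sum_{j,k}\sigma_{jk}(\nabla\phi(w))\partial^2_{x_jx_k}u=0$ in $E_{i,w}$ with affine-plus-constant boundary data.

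The key computational point is that one can solve this constant-coefficient problem essentially explicitly. The affine part of the boundary data, $\langle\nabla\psi(w),x-w\rangle$, extends harmonically (for the operator $\sum\sigma_{jk}(\nabla\phi(w))\partial^2_{jk}$) as the same affine function, contributing exactly $\langle\nabla\psi(w),x-w\rangle$ to $u$. The constant part $C_{i,w}$ contributes $C_{i,w}$. The remaining discrepancy comes from the fact that the \emph{actual} boundary data we want to match, namely $\psi(x)-\psi(w)-\langle\nabla\psi(w),x-w\rangle\approx -\tfrac12 Q_w(x)$, is quadratic, not affine; since $\partial E_{i,w}=\{Q_w=r_i^2\}$, on the boundary this quadratic equals the constant $-\tfrac12 r_i^2$ — which is why it got absorbed into $C_{i,w}$ in \eqref{eq:C'}. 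The genuine quadratic correction to $\phi_{i,w}$ is obtained by solving $\sum\sigma_{jk}(\nabla\phi(w))\partial^2_{jk}v=0$ with $v|_{\partial E_{i,w}}=\tfrac12 Q_w(x)$; since $Q_w$ is a positive quadratic form and $\partial E_{i,w}$ is precisely its level set $\{Q_w=r_i^2\}$, the solution is $v(x)=\tfrac12 Q_w(x)-(\text{const})+$ (a multiple of a $\sigma(\nabla\phi(w))$-harmonic quadratic). Explicitly, writing $v=\alpha Q_w + \beta\, \tilde Q_w + \gamma$ where $\tilde Q_w$ is the companion quadratic making $\alpha Q_w+\beta\tilde Q_w$ harmonic for the frozen operator, and matching $v=\tfrac12 r_i^2$ on $\{Q_w=r_i^2\}$, one extracts the coefficient $a$ of $Q_w(x)$ in \eqref{eq:sa3} as a ratio of entries of the matrix $(\sigma_{jk}(\nabla\phi(w)))$ — and, crucially, because $Q_w=-\langle x-w,\nabla^2\psi(w),x-w\rangle$ with $\nabla^2\psi$ \emph{diagonal} (see \eqref{eq:gradpsi}) and enormously anisotropic for small $\xi$ (see \eqref{eq:confrontoderivate}), this ratio is dominated by the diagonal structure and one gets $a\le 1/4$ uniformly for $\xi$ small; this is exactly the point flagged as ``see \eqref{eq:ato0}'' and is where the precise choice of $\psi$ enters. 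The constant $C'_{i,w}$ in \eqref{eq:C''} is then just $C_{i,w}$ shifted by $a r_i^2$, coming from the boundary-matching of the quadratic piece.

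The error terms are controlled as follows. The $o(\delta)$ term collects: (i) the $O(r_i^3\epsilon_i)=O(\delta^{1+\eta/2})$ slack already present in \eqref{eq:skoda}--\eqref{bcku} between the true boundary data $[\phi+\epsilon_i\psi]|_{\partial E_{i,w}}$ and the affine approximation $f_{i,w}$; this propagates to the solution by the maximum principle for the elliptic operator. (ii) The error from freezing the coefficients: replacing $\sigma_{jk}(\nabla\phi(x))$ by $\sigma_{jk}(\nabla\phi(w))$ is an $O(r_i)$ perturbation of a uniformly elliptic operator on a domain of size $r_i$, which by Schauder-type interior estimates perturbs $u$ (of size $O(r_i^2/\text{scale})$, i.e. $u=O(r_i)$ after accounting that $\nabla\psi=O(1/\xi)$ and the quadratic part is $O(r_i^2)$) — more carefully, the quadratic correction $\epsilon_i v$ is $O(\epsilon_i r_i^2)$ and its perturbation from coefficient-freezing is $O(\epsilon_i r_i^3)=o(\delta)$; the affine part is solved exactly regardless. (iii) The $O(\epsilon_i^2 r_i^2)$ term is the second-order term in the $\epsilon_i$-expansion of the nonlinear PDE: writing $\phi_{i,w}=\phi+\epsilon_i u+\epsilon_i^2 w_2+\dots$, the function $w_2$ solves the linearized equation with a quadratic-in-$\nabla u$ source (involving $\sigma_{jk\ell}$), and since $\nabla u=O(1)$ and the domain has size $r_i$, one gets $\|w_2\|_\infty=O(r_i^2)$ by the maximum principle, hence the $O(\epsilon_i^2 r_i^2)$ contribution. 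All constants are uniform in $i\le i_{max}$ and $w$ because the ellipses have bounded aspect ratio (\cref{rem:ellissi}), sit in the fixed compact set where $\phi$ is smooth and $\sigma$ is uniformly elliptic, and $r_i\le r_{i_{max}}=\delta^{3\eta/2}\to 0$.

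\textbf{Main obstacle.} The delicate part is item (ii)/(iii): making the perturbative expansion of the \emph{fully nonlinear} PDE \eqref{eq:EL} rigorous with error bounds uniform over all the (shrinking, but of varying eccentricity) ellipses $E_{i,w}$ and over the relevant boundary data — i.e. quantitative Schauder estimates on domains of vanishing size, tracking the $r_i$-dependence of every constant. This is precisely where the excerpt announces that ``some delicate information on perturbations of the non-linear Euler Lagrange PDE'' is needed, to be supplied in \cref{sec:perturbation_continuum}; I would isolate this as a standalone lemma on perturbations of solutions to uniformly elliptic PDEs on rescaled domains and invoke it here. The secondary, but conceptually important, obstacle is verifying $a<1/4$ for $\xi$ small uniformly: this requires computing $a$ explicitly as a function of the frozen Hessian $\nabla^2\psi(w)$ and the diffusion matrix $\sigma_{jk}(\nabla\phi(w))$, and exploiting \eqref{eq:confrontoderivate} — that the anisotropy of $\nabla^2\psi$ is as large as $1/\xi$ — to dominate the (bounded) ellipticity ratio of $\sigma$, forcing the quadratic-correction coefficient below $1/4$.
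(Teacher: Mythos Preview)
There is a genuine gap: your linearized equation is wrong, and as a result the entire mechanism producing the quadratic term $-a\epsilon_i Q_w(x)$ is misidentified.

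You write the linearization of \eqref{eq:EL} around $\phi$ as $\sum_{j,k}\sigma_{jk}(\nabla\phi)\partial^2_{x_jx_k}u=0$. But expanding $\sum\sigma_{jk}(\nabla(\phi+\epsilon_i u))\partial^2_{jk}(\phi+\epsilon_i u)=0$ to first order in $\epsilon_i$ gives
\[
Lu:=\sum_{j,k}\sigma_{jk}(\nabla\phi)\,\partial^2_{x_jx_k}u+\sum_{j,k,l}\sigma_{jkl}(\nabla\phi)\,\partial_{x_l}u\,\partial^2_{x_jx_k}\phi=0,
\]
with a first-order term you dropped. This matters decisively. With your truncated operator, the affine function $u_0(x)=C_{i,w}+\langle\nabla\psi(w),x-w\rangle$ is an exact solution (zero Hessian), so $u=u_0$ and there is \emph{no} quadratic correction. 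Your attempted recovery --- ``solve $\sum\sigma_{jk}(\nabla\phi(w))\partial^2_{jk}v=0$ with $v|_{\partial E_{i,w}}=\tfrac12 Q_w(x)$'' --- is vacuous: on $\partial E_{i,w}=\{Q_w=r_i^2\}$ this boundary data is the constant $\tfrac12 r_i^2$, so the unique solution is that same constant, and again no quadratic appears. With your scheme one would get $a=0$, and the resulting expansion misses a term of size $\epsilon_i r_i^2=\delta^{1-\eta}$, which is neither $o(\delta)$ nor $O(\epsilon_i^2 r_i^2)$; \eqref{eq:sa3} would be false.

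The actual source of the quadratic is the nonlinearity encoded in the first-order term above. Plugging $u_0$ into the correct operator $L$ gives the nonzero residual $Lu_0=\sum_{j,k,l}\sigma_{jkl}(\nabla\phi)(\nabla\psi(w))_l\,\partial^2_{x_jx_k}\phi$; the correction $v=u-u_0$ solves $Lv=-Lu_0$ with zero boundary data, and on the small ellipse (where the source is nearly constant) $v$ is, to leading order, a multiple of $r_i^2-Q_w(x)$. This is exactly the paper's equation $d_2F_{w,\Phi}\circ\chi=-d_1F_{w,\Phi}\circ\ell$ with the nonzero right-hand side \eqref{eq:d1F}. The paper then shows $\chi\approx b\,Q$ by choosing $b$ so that the residual vanishes at the center and has small derivatives (see \eqref{eq:chi-aQ}). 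Your ``anisotropy'' explanation for $a<1/4$ is also off: looking at the vanishing condition at the center, $a$ is (up to bounded factors involving $\sigma$ and $\nabla^2\phi$) the ratio $|\nabla\psi(w)|/|\nabla^2\psi(w)|$, and it is precisely \eqref{eq:confrontoderivate} --- second derivatives of $\psi$ dominate first derivatives by $1/\xi$ --- that forces $a\le C\xi<1/4$; the aspect ratio of $\nabla^2\psi$ plays no role here.
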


\begin{remark}
  For any ellipse $E^\rho_{i,w}$ with the same center and aspect ratio as $E_{i,w}$, just
  shrinked by a factor $\rho<1$, \cref{thm:equilibrium_development} implies that
  \begin{eqnarray}
    \label{eq:psinonpsi}
    \big[ \phi_{i,w}\big]|_{\partial E^\rho_{i,w}}- \big[ \phi+\epsilon_i \psi\big]|_{\partial E^\rho_{i,w}}=-\epsilon_i r_i^2 
    \Big(\frac12-a\Big)(1-\rho^2)+o(\delta)+O(\epsilon_i^2r_i^2)
  \end{eqnarray}
  and the r.h.s. is strictly negative for $\rho<1$: the limit shape $\phi_{i,w}$ is strictly lower than
  $\phi+\epsilon_i\psi$, in the interior of $E_{i,w}$.
\end{remark}

\begin{definition}
 We let $\pi_{i,w}$ be the uniform
distribution on height functions on the discretized domain $(E^{1/2}_{i,w})^\d$ with
boundary condition given by (any 1-discretization of) $\phi_{i,w}$ on
$\partial (E^{1/2}_{i,w})^\d$.
\label{piiw}
\end{definition}

\begin{theorem}
  \label{th:letrucdestrentederivees}
  For every $n\ge1$ there exists a constant $C_n > 0$ such that for all $w\in W^\d,i\le i_{max}$,
  all $x$ in $(E^{1/2}_{i,w})^\d$,
    \begin{eqnarray}
    \label{eq:letruc}
    \pi_{i,w}\left[ \Big| h(x) - \phi_{i,w}(x) \Big|^n\right] \leq C_n \delta^{n} \abs{\log\delta}^{2n}. 
  \end{eqnarray}
\end{theorem}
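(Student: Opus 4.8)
The plan is to reduce \cref{th:letrucdestrentederivees} to a single application of \cref{cor:33} on a domain of size of order $1$, obtained from $E_{i,w}$ by dilation, and then to argue that the constant produced by \cref{cor:33} can be chosen uniformly in $i\le i_{max}$ and $w\in W^\d$.

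First I would fix $i$ and $w$ and apply the affine map $T_{i,w}:x\mapsto\hat x:=(x-w)/r_i$. Since $w\in W^\d$ is a vertex of $\mathcal T_\delta$ and $T_{i,w}$ is a similarity, it carries $\mathcal T_\delta$ onto $\mathcal T_{\hat\delta}$, with $\hat\delta:=\delta/r_i$, the ellipse $E_{i,w}$ onto the \emph{$i$- and $\delta$-independent} ellipse $\hat E_w:=\{y:\ -\langle y,\nabla^2\psi(w)\,y\rangle<1\}$ (axis-aligned, with semi-axes $1/m_1(w)$, $1/m_2(w)$, cf.\ \eqref{eq:gradpsi} and \cref{rem:ellissi}), and $E^{1/2}_{i,w}$ onto $\hat E^{1/2}_w$, with $\overline{\hat E^{1/2}_w}\subset\hat E_w$. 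Set $\hat h:=r_i^{-1}\,h\circ T_{i,w}^{-1}$, which takes values in $\hat\delta\mathbb Z$, and $\hat\phi_{i,w}(\hat x):=r_i^{-1}\phi_{i,w}(w+r_i\hat x)$. A chain-rule computation shows that \eqref{eq:EL} is invariant under $\phi\mapsto r_i^{-1}\phi(w+r_i\,\cdot\,)$ (the gradient is unchanged, the Hessian is multiplied by $r_i^{-1}$), so $\hat\phi_{i,w}$ solves \eqref{eq:EL} in $\hat E_w$; and $T_{i,w}$ preserves the Lipschitz constraints \eqref{eq:gradients} (both sides get divided by $r_i$), so $h\mapsto\hat h$ is a bijection from the support of $\pi_{i,w}$ onto the set of admissible height functions on $(\hat E^{1/2}_w)^\d$ relative to $\mathcal T_{\hat\delta}$, with boundary condition some $1$-discretization of $\hat\phi_{i,w}$, and it carries $\pi_{i,w}$ to the corresponding uniform measure $\hat\pi_{i,w}$; in particular
\[
\pi_{i,w}\!\left[\ \abs{h(x)-\phi_{i,w}(x)}^{n}\ \right]=r_i^{\,n}\ \hat\pi_{i,w}\!\left[\ \abs{\hat h(\hat x)-\hat\phi_{i,w}(\hat x)}^{n}\ \right],\qquad \hat x:=T_{i,w}(x).
\]

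Next I would check that $(\hat E_w,\hat\phi_{i,w})$ satisfies \cref{ass:ls,assu:W}, so that \cref{cor:33} (applied at mesh $\hat\delta$, with outer domain $\hat E_w$, inner domain $\hat E^{1/2}_w$ and $K=1$) yields $\hat\pi_{i,w}[\,\abs{\hat h(\hat x)-\hat\phi_{i,w}(\hat x)}^{n}\,]\le C_n\hat\delta^{\,n}\abs{\log\hat\delta}^{2n}$. The only non-trivial input is the smoothness and non-frozenness of $\hat\phi_{i,w}$. Inserting the Taylor expansion of $\phi$ at $w$ into the boundary data \eqref{eq:bcku1} and using $r_i\to0$, one sees that $\hat\phi_{i,w}$ restricted to $\partial\hat E_w$ equals, up to an (irrelevant) additive constant and an error that is $O(r_i)$ in every $C^{k}(\partial\hat E_w)$-norm, uniformly in $i$ and $w$, the linear function $\hat x\mapsto\langle\nabla\phi(w)+\epsilon_i\nabla\psi(w),\hat x\rangle$. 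Since affine functions solve \eqref{eq:EL} trivially, standard stability (and existence) for the Dirichlet problem of the uniformly elliptic equation \eqref{eq:EL} near affine solutions --- whose linearization at $\nabla\phi=p$ is the constant-coefficient, elliptic operator $u\mapsto\sum_{i,j}\sigma_{ij}(p)\partial^2_{x_i x_j}u$, positivity of $(\sigma_{ij}(p))$ being the strict convexity of $\sigma$ in $\inte\mathbb T$ --- then implies that $\hat\phi_{i,w}$ is itself $O(r_i)$-close, say in $C^{33,\alpha}(\overline{\hat E_w})$, to that linear function, still uniformly in $i,w$. Because $\nabla\phi(w)+\epsilon_i\nabla\psi(w)$ stays in a fixed compact subset of $\inte\mathbb T$ (by \eqref{eq:nonfroz} for the global limit shape, and the smallness of $\epsilon_0$ once $\xi$, hence $\sup_{\overline W}\abs{\nabla\psi}$, is fixed), this shows that for $\delta$ small $\hat\phi_{i,w}\in C^\infty(\overline{\hat E_w})$ with $d(\nabla\hat\phi_{i,w},\partial\mathbb T)$ bounded below uniformly; hence \cref{ass:ls} holds, the relevant set of height functions is non-empty by \cref{rem:T}, and \cref{assu:W} holds as well.

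Undoing the rescaling in the displayed identity then gives
\[
\pi_{i,w}\!\left[\ \abs{h(x)-\phi_{i,w}(x)}^{n}\ \right]\le C_n\,r_i^{\,n}\,(\delta/r_i)^{n}\,\abs{\log(\delta/r_i)}^{2n}=C_n\,\delta^{n}\,\abs{\log(\delta/r_i)}^{2n}\le C_n\,\delta^{n}\,\abs{\log\delta}^{2n},
\]
where the last step uses that, for $i\le i_{max}$ and $\delta$ small enough, one has $\delta<r_i\le 1$ (by \eqref{eq:r0rimax}), so $0<\abs{\log(\delta/r_i)}=\log(1/\delta)+\log r_i\le\log(1/\delta)$. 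The main obstacle is making the constant $C_n$ uniform over all pairs $(i,w)$. Its dependence on $i$ is controlled via the continuous dependence on $\phi$ asserted in \cref{th:33}: for fixed $w$, the family $\{\hat\phi_{i,w}\}_{i\le i_{max}}$ lies --- for $\delta$ small --- in a small $\norm{\cdot}_{33,3}$-neighbourhood of the compact set of affine functions on $\hat E_w$ with gradient in $\{\nabla\phi(w)+t\nabla\psi(w):t\in[0,\epsilon_0]\}$, each of which is a bona fide (non-frozen) limit shape, so the constant is finite there and hence bounded along the family. The dependence on $w$ is handled by observing that the ellipses $\hat E_w$, as $w$ ranges over $\overline W$ with $\xi$ fixed, form a compact family of smooth, uniformly convex domains (\cref{rem:ellissi}), and that the constant in \cref{th:33} depends on the domain only through such uniform geometric data; equivalently, one applies \cref{th:33} along the relatively compact family $\{(\hat E_w,\hat\phi_{i,w})\}_{i\le i_{max},\,w\in W^\d}$. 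Turning this uniformity, and the uniform elliptic stability estimate for \eqref{eq:EL} near affine solutions used above, into rigorous statements is the technical heart of the argument; the scaling computations of the first and third paragraphs are routine bookkeeping.
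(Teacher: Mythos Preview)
Your proposal follows essentially the same route as the paper: rescale by $1/r_i$ to a domain of size order~$1$, apply \cref{cor:33} at mesh $\hat\delta=\delta/r_i$, and then use a compactness argument to make $C_n$ uniform in $(i,w)$. The scaling computations in your first and third paragraphs match the paper's almost line by line.

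The only substantive difference is in how uniformity of $C_n$ is obtained. You argue directly that the rescaled limit shapes $\hat\phi_{i,w}$ are $O(r_i)$-close (in high Sobolev norm) to affine functions, via ``standard stability'' of the quasilinear elliptic problem \eqref{eq:EL} near an affine solution, and then that both the gradients of those affine functions and the ellipses $\hat E_w$ range over compact sets. The paper instead composes with the further affine map $T_w=(-\nabla^2\psi(w))^{-1/2}$ so as to work on the \emph{fixed} ball $B(0,1)$, and obtains precompactness of the resulting family $\{\Phi_{i,w}\}$ as \cref{cor:rog}, a corollary of the implicit function theorem \cref{prop:analysefonct}. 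Your ``standard stability'' is exactly this implicit function theorem in disguise; the paper spells it out in detail because the quantitative version is also needed for \cref{thm:equilibrium_development}. One place where the paper is more careful than your sketch: \cref{th:33} as stated gives continuity of $C_n$ in $\phi$ for a \emph{fixed} ambient domain $U$, not joint continuity in the pair $(U,\phi)$, so your appeal to a ``compact family of domains'' is not directly licensed. The paper sidesteps this by a finite-cover trick: since the aspect ratios are uniformly bounded, one chooses finitely many fixed domains $U_1,\dots,U_k$ with $\mathcal E^{1/2}_{i,w}\subset U_{s(i,w)}\subset\mathcal E_{i,w}$ and applies the $\phi$-continuity on each $U_s$ separately. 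Your argument would go through once patched in this way (or, equivalently, by mapping all the $\hat E_w$ to $B(0,1)$ as the paper does).
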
 
Let us emphasize here that compared to \cref{prop:fluctuations_reference}, the main difference in the above statement is the uniformity of the constants $C_n$ with respect to $w$ and $i$.

Theorems \ref{thm:equilibrium_development} and \ref{th:letrucdestrentederivees} will be proven in \cref{sec:perturbation_continuum}. For the moment, let us assume they hold and let us complete the proof of \eqref{eq:induction}.

\subsection{Proof of Equation \eqref{eq:induction}}
\label{sec:proof52}

We will prove \eqref{eq:induction} by proving that, for $\delta<\delta_0$ for some small but positive $\delta_0$,
\[\label{eq:induction2}
\hat\P_{h^{(i)}_{max}} \Big( h_t(w) < \phi(w) + \eps_{i+1} \psi(w)  \text{ for every } t\in [t_{i+1} - t_i, \delta^{-5}- t_i]\Big) \geq 1 - \delta^9
\]
separately for each $w\in W^\d$ and then applying a union bound, since
the cardinality of $W^\d$ is $O(\delta^{-2})$.  For the rest of this
section, $\delta_0$ can change from line to line but it is always a
small constant that is independent of $i,w$.

Recall that $\hat\P_{h^{(i)}_{max}}$ denotes the law of the process in
the whole domain $W^\d$, with height constrained between
$\phi-\epsilon_i\psi$ and $\phi+\epsilon_i\psi$, and with initial
condition $h^{(i)}_{max}$ that is the maximal height function in
$\Omega_{W^\d,\phi^\d|_{\partial W^\d}}$ that is lower than
$\phi+\epsilon_i\psi$.

First we observe that, since the dynamics starts from the maximal
condition allowed by the constraints, by monotonicity one can censor
any update outside of $W^\d \cap (E^{1/2}_{i,w})^\d$. In order to
treat on the same footing the points $w$ close and far  from the
boundary of $W^\d$, it is convenient to have the height function evolving on the
whole discretized ellipse $(E^{1/2}_{i,w})^\d$, even when this does not
entirely fit in $W^\d$. This is done via a chain of monotonicity
arguments in \cref{prop:quelbordel} below. For this, we need a couple
of auxiliary definitions:
\begin{definition}
  Let $h_{max}^{(i,w)}:(E^{1/2}_{i,w})^\d\mapsto \delta\Z$ denote the
  maximal height function on $(E^{1/2}_{i,w})^\d$  lower than
  $\phi+\epsilon_i\psi$. Let $\tilde\P_{i,w}$ be the law of the
  Glauber dynamics in $(E^{1/2}_{i,w})^\d$ started from the initial
  condition $ h_{max}^{(i,w)}$, evolving with height function constrained
  between $\phi_{i,w}-\delta^{1-2\eta}$ and $\phi+\epsilon_i\psi$, and
  with boundary height given by the restriction
  $h_{max}^{(i,w)}|_{\partial (E^{1/2}_{i,w})^\d}$.
\end{definition}

\begin{proposition}
  \label{prop:quelbordel}
  The probability in the l.h.s. of \eqref{eq:induction2} is lower bounded by
  \begin{eqnarray}
    \label{eq:tildep}
    \tilde \P_{i,w}\big( h_t(w) < \phi(w) + \eps_{i+1} \psi(w)  \text{ for every } t\in [t_{i+1} - t_i, \delta^{-5}- t_i]\big).
  \end{eqnarray}
\end{proposition}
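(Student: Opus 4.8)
The plan is to reduce the probability in the left-hand side of \eqref{eq:induction2}, which concerns the dynamics in all of $W^\d$ constrained between $\phi-\epsilon_i\psi$ and $\phi+\epsilon_i\psi$, down to the dynamics in the small ellipse $(E^{1/2}_{i,w})^\d$ described by $\tilde\P_{i,w}$, through a sequence of monotonicity (censoring and boundary-raising) comparisons. First I would use the fact that $h^{(i)}_{max}$ is the maximal configuration allowed by the constraint $h\le\phi+\epsilon_i\psi$, so that censoring \emph{all} updates outside $W^\d\cap(E^{1/2}_{i,w})^\d$ can only raise the height function: by the standard monotonicity/censoring inequality for monotone Glauber dynamics (the updates outside are ``downward-only'' from the maximal start, so censoring them yields a larger process), the probability that $h_t(w)<\phi(w)+\epsilon_{i+1}\psi(w)$ for all $t$ in the relevant window is not decreased when we replace the full dynamics by the one that only ever updates inside $W^\d\cap(E^{1/2}_{i,w})^\d$. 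This is the step that ``localizes'' the evolution to the ellipse, and it is exactly the idea attributed to \cite{CMT} in the sketch.

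Next I would deal with the mismatch between $W^\d\cap(E^{1/2}_{i,w})^\d$ and the full discretized ellipse $(E^{1/2}_{i,w})^\d$ — i.e. with points $w$ near $\partial W^\d$, for which the ellipse pokes out of $W^\d$. The point is that $\overline W\subset U$ and $r_i\ll1$, so $E^{1/2}_{i,w}\subset U$ always; hence $\phi$, $\psi$ and therefore $\phi+\epsilon_i\psi$ are all well-defined on the whole of $(E^{1/2}_{i,w})^\d$. I would enlarge the domain from $W^\d\cap(E^{1/2}_{i,w})^\d$ to $(E^{1/2}_{i,w})^\d$, putting on the newly exposed part of the boundary the boundary height $h^{(i,w)}_{max}$ (equivalently, imposing the constraint $h\le\phi+\epsilon_i\psi$ there and starting from the maximal such configuration). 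Enlarging the domain and raising the boundary values can only increase the height function in the coupling (monotonicity with respect to the boundary condition, $\pi_{A,g}\preceq\pi_{A,g'}$ for $g\le g'$, together with the coupled dynamics), so again the probability that $h_t(w)$ stays below $\phi(w)+\epsilon_{i+1}\psi(w)$ is not decreased. After this step the dynamics evolves on $(E^{1/2}_{i,w})^\d$, with boundary height $h^{(i,w)}_{max}|_{\partial(E^{1/2}_{i,w})^\d}$, initial condition $h^{(i,w)}_{max}$, and with the upper constraint $h\le\phi+\epsilon_i\psi$ retained.

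The last ingredient is to insert the \emph{lower} constraint $h\ge\phi_{i,w}-\delta^{1-2\eta}$ present in the definition of $\tilde\P_{i,w}$. Here I would argue that, for $\delta<\delta_0$, this lower barrier is essentially never hit, so adding it changes nothing relevant. Concretely: by Theorem \ref{thm:equilibrium_development}, $\phi_{i,w}$ is the limit shape in $E_{i,w}$ with boundary condition $f_{i,w}$, which is $o(\delta)$-close to $[\phi+\epsilon_i\psi]|_{\partial E_{i,w}}$; on $\partial(E^{1/2}_{i,w})^\d$ the boundary height $h^{(i,w)}_{max}$ is within $O(\delta)$ of $\phi+\epsilon_i\psi$ and hence, by \eqref{eq:psinonpsi}, it lies strictly above $\phi_{i,w}$ there by an amount of order $\epsilon_i r_i^2\gg\delta^{1-2\eta}$ — wait, more carefully: one checks $\epsilon_i r_i^2=\delta^{1-\eta}\gg\delta^{1-2\eta}$, so the boundary data already sit comfortably above $\phi_{i,w}-\delta^{1-2\eta}$. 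Since the dynamics with the extra lower barrier is dominated by the one without it, and both start from $h^{(i,w)}_{max}$, the only way adding the barrier could matter is if the unconstrained (below) process dips below $\phi_{i,w}-\delta^{1-2\eta}$; but by monotonicity that process dominates the Glauber dynamics in $(E^{1/2}_{i,w})^\d$ with boundary condition $\phi_{i,w}|_{\partial}$ and \emph{both} barriers, whose equilibrium is $\pi_{i,w}$, and by Theorem \ref{th:letrucdestrentederivees} (Markov's inequality applied to $|h(x)-\phi_{i,w}(x)|^n$ with $n$ large, plus the union bound of \cref{cor:ubt} over the $O(\delta^{-2})$ vertices and over a time window of polynomial length) the probability of ever going $\delta^{1-2\eta}$ below $\phi_{i,w}$ is super-polynomially small. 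Therefore the probability that $h_t(w)<\phi(w)+\epsilon_{i+1}\psi(w)$ for the genuinely relevant (upper-barrier-only) dynamics is within a negligible error of the same probability under $\tilde\P_{i,w}$, and in particular is lower bounded by \eqref{eq:tildep}. The main obstacle, and the step needing the most care, is the second one — setting up the chain of monotone couplings so that enlarging the domain across $\partial W^\d$ and simultaneously keeping the upper constraint is legitimate; one has to make sure the initial configuration after enlargement still dominates the image of the original process under the coupling, which is where the choice of $h^{(i,w)}_{max}$ as the \emph{maximal} admissible configuration is used.
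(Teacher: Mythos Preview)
Your overall plan (censor outside the ellipse, enlarge the domain with higher boundary data, then adjust the lower constraint) is the same as the paper's, but your third step contains an error and an unnecessary detour. You write that ``the dynamics with the extra lower barrier is dominated by the one without it'': this is backwards. Adding a floor constraint $h\ge \phi_{i,w}-\delta^{1-2\eta}$ can only \emph{raise} the process in the monotone coupling, so the dynamics with the floor \emph{dominates} the one without it. Once the direction is fixed, the probabilistic argument you sketch (invoking \cref{th:letrucdestrentederivees}, union bounds over time and vertices, etc.) is superfluous: since the event $\{h_t(w)<\phi(w)+\epsilon_{i+1}\psi(w)\ \forall t\}$ is decreasing, the lower bound follows immediately from the domination. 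This also means you should not drop the original lower constraint $\phi-\epsilon_i\psi$ in your step 2 (dropping a floor goes the wrong way); rather, one passes directly from the floor $\phi-\epsilon_i\psi$ to the higher floor $\phi_{i,w}-\delta^{1-2\eta}$.

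The paper does exactly this, as a single deterministic monotonicity comparison. The one ingredient you are missing is the pointwise inequality $\phi<\phi_{i,w}-\delta^{1-2\eta}$ on $E^{1/2}_{i,w}$, obtained from the maximum principle for the elliptic PDE \eqref{eq:EL}: on $\partial E_{i,w}$ one has $\phi_{i,w}=\phi+\epsilon_i\psi\ge \phi+\epsilon_{i_{max}}=\phi+\delta^{1-4\eta}>\phi+\delta^{1-2\eta}$, and both $\phi$ and $\phi_{i,w}$ solve \eqref{eq:EL} inside. This inequality does double duty: it shows that the floor of $\tilde\P_{i,w}$ is genuinely higher than that of $\hat\P$, and, in the boundary case $E^{1/2}_{i,w}\not\subset W^\d$, it shows that the height under $\tilde\P_{i,w}$ on $(E^{1/2}_{i,w})^\d\setminus W^\d$ (which is at least $\phi_{i,w}-\delta^{1-2\eta}>\phi$) dominates the frozen value $\phi^\d$ seen there by the censored process. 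With this in hand, $\tilde\P_{i,w}$ has higher floor, equal ceiling, and higher-or-equal initial and boundary data than the censored $\hat\P$, and the proposition follows without any appeal to equilibrium fluctuation estimates.
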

We emphasize that the main difference between the processes of law
$\hat \P_{h^{(i)}_{max}}$ and $\tilde \P_{i,w}$ is that in the former, the height evolves in
$W^\d$ while in the latter it evolves in the discrete ellipse
$(E^{1/2}_{i,w})^\d$. For most points $w$ one has
$W^\d\supset (E^{1/2}_{i,w})^\d$ but, for $w$ close to the boundary,
this is not true and the implication of \cref{prop:quelbordel} requires extra care.

\begin{proof}[Proof of \cref{prop:quelbordel}]

Let us note first of all that
\begin{eqnarray}
  \label{eq:solsol}
  \phi< \phi_{i,w}-\delta^{1-2\eta} \text{ on } E^{1/2}_{i,w}.
\end{eqnarray}
To see this, start by observing that on $\partial E_{i,w}$ one has
\begin{eqnarray}
  \label{eq:psipsipsi}
  \phi_{i,w}-\delta^{1-2\eta}=\phi+\epsilon_i \psi-\delta^{1-2\eta}\ge \phi+\delta^{1-4\eta}-\delta^{1-2\eta}>\phi
\end{eqnarray}
because $\psi\ge 1$ (as observed just after \eqref{eq:psi}) and
$\epsilon_i\ge \epsilon_{i_{max}}=\delta^{1-4\eta}.$ Since both
$\phi_{i,w}$ and $\phi$ are solutions of \eqref{eq:EL} in $ E_{i,w}$
and the former has higher boundary condition than the latter, by the maximum principle for elliptic PDE the
inequality is preserved in the interior of the domain, and in
particular on $ E^{1/2}_{i,w}$.
  
If $(E^{1/2}_{i,w})^\d\subset W^\d$, the statement of  \cref{prop:quelbordel}
is an immediate consequence of the following three observations:
  \begin{enumerate}
  \item [(i)] one can lower bound the probability in \eqref{eq:induction2}
    by censoring the updates outside $(E^{1/2}_{i,w})^\d$ (because the
    initial condition is maximal). In order to avoid introducing a new
    notation, we still call $\hat \P_{h^{(i)}_{max}}$ the law of the
    censored process. Now, both under $\hat \P_{h^{(i)}_{max}}$ and $\tilde\P_{i,w}$, the
    height function evolves only inside the domain $(E^{1/2}_{i,w})^\d$;
    
  \item [(ii)] the lower constraint $h_t\ge \phi_{i,w}-\delta^{1-2\eta}$ of
    the dynamics with law $\tilde \P_{i,w}$ is more stringent than the
    constraint $h_t\ge \phi-\epsilon_i\psi$ of the dynamics $\hat \P_{h^{(i)}_{max}}$,
    in view of \eqref{eq:solsol} and of $\psi\ge 0$;
    
  \item [(iii)] the two dynamics have the same upper constraint $h_t\le \phi+\epsilon_i\psi$;
  \item [(iv)] the initial condition $h^{(i)}_{max}|_{(E^{1/2}_{i,w})^\d}$ of
    the dynamics $\hat \P_{h^{(i)}_{max}}$ is lower (or equal) to that of the dynamics
    $\tilde \P_{i,w}$, because for the latter one takes the highest
    configuration lower than $\phi+\epsilon_i\psi$, while for the
    former one additionally requires that
    $h^{(i)}_{max}\in \Omega_{W^\d,\phi^\d|_{\partial W^\d}}$. The
    same observation holds for the boundary conditions on
    $\partial (E^{1/2}_{i,w})^\d$: the one of the dynamics $\hat \P_{h^{(i)}_{max}}$
    is lower or equal to that of the dynamics $\tilde\P_{i,w}$.
    
  \end{enumerate}

The argument for $w$ sufficiently close to the boundary of $W^\d$, so
that $(E^{1/2}_{i,w})^\d$ does not fit in it, is slightly more
involved.  In this case, under $\hat\P_{h^{(i)}_{max}}$ the height in
$(E^{1/2}_{i,w})^\d\setminus W^\d$ is time-independent. In fact, the height in
$(E^{1/2}_{i,w})^\d\setminus W^\d$ can be imagined to be fixed to any
configuration compatible with the actual boundary condition $\phi^\d$
on $\partial W^\d$; in particular, it is convenient to imagine that
$h$ is fixed to $\phi^\d$ on the whole $(E^{1/2}_{i,w})^\d\setminus
W^\d$. In this case, one sees easily that the steps (i) to (iv) above
again imply the statement of the proposition, once one adds the extra
observation that the height in $(E^{1/2}_{i,w})^\d\setminus W^\d$ of the process with law $\tilde \P_{i,w}$ is deterministically higher than the
(time-independent) one of the process with law $\hat \P_{h^{(i)}_{max}}$, because by
\eqref{eq:psipsipsi} $\phi^\d$ is lower than
$\phi_{i,w}-\delta^{1-2\eta}$.
\end{proof}

We are then left with the task of showing:
\begin{eqnarray}
  \label{eq:leftto}
\text{for $\delta<\delta_0$, the l.h.s. of 
  \eqref{eq:tildep} is  lower bounded by $1-\delta^9$.}
\end{eqnarray}

A first observation in this respect is that the mixing time of the
dynamics with law $\tilde\P_{i,w}$ is upper bounded
by \[c_0\delta^{-4}(\log \delta)^2r_i^2\Big(\epsilon_i
r_i^2+\delta^{1-2\eta}\Big)^2\] for some absolute constant $c_0$ that can
change from line to line in the rest of the proof.   To see this, it is sufficient to recall \cref{lemma:PTRF}, together with the
fact that the dynamics is constrained between
$\phi_{i,w}-\delta^{1-2\eta}$ and $ \phi+\epsilon_i\psi$ and to
observe that on
$E^{1/2}_{i,w}$, \[|(\phi_{i,w}-\delta^{1-2\eta})-(\phi+\epsilon_i\psi)|\le
c_0(\delta^{1-2\eta}+\epsilon_i r_i^2),\] as follows from
\eqref{eq:psinonpsi}
. We emphasize that
the  constant $c_0$ is uniform with respect to $i\le i_{max},w\in W^\d$.
Recalling that by definition $\epsilon_i
r_i^2=\delta^{1-\eta}$ (see \eqref{eq:ri}), we see that the mixing time is upper bounded
by
\[
c_0\Big(\frac{r_i}\delta\Big)^2\delta^{-4\eta}=c_0\delta^{\eta}(t_{i+1}-t_i).
\]
Therefore, by the sub-multiplicative property \eqref{eq:submult}, at times $t\ge t_{i+1}-t_i$ the chain is at total variation distance  $2^{-1/(c_0\delta^{\eta})}$ from its equilibrium measure $\tilde \pi_{i,w}$.
We have then that the probability in \eqref{eq:tildep} is lower bounded by
\begin{eqnarray}
  \tilde \P^{eq}_{i,w}\big( h_t(w) < \phi(w) + \eps_{i+1} \psi(w)  \text{ for every } t\in [0, \delta^{-5}]\big)-2^{-1/(c_0\delta^{\eta})},
\end{eqnarray}
where $\tilde \P^{eq}_{i,w}$ denotes the law of the equilibrium process.
We claim that under $\tilde\pi_{i,w}$ one has
\begin{eqnarray}
  \label{eq:100}
\tilde\pi_{i,w}( h(w) < \phi(w) + \eps_{i+1} \psi(w) )\ge 1-\delta^{18}
\end{eqnarray}
for $\delta\le \delta_0$, where the exponent $18$ has been chosen simply so that we will get
$\delta^9$ in \eqref{eq:leftto}, but it could be replaced by any other
positive number, at the price of changing $\delta_0$.

Assume for a moment that \eqref{eq:100} holds; since the cardinality of $(E^{1/2}_{i,w})^\d$ is smaller than $\delta^{-2}$ we deduce from \cref{cor:ubt} that
\begin{eqnarray}
 \tilde \P^{eq}_{i,w}\big( h_t(w) < \phi(w) + \eps_{i+1} \psi(w)  \text{ for every } t\in [0, \delta^{-5}]\big)\ge 1-\delta^{10}.
\end{eqnarray}
Putting everything together, \eqref{eq:leftto} follows, provided we prove \eqref{eq:100}.

Recall from \cref{rem:eqmes} that
\begin{eqnarray}
  \label{eq:condizionato}
\tilde\pi_{i,w}=\pi_{i,w}(\cdot|\phi_{i,w}-\delta^{1-2\eta}\le h\le \phi+\epsilon_i\psi),
\end{eqnarray}
where $\pi_{i,w}$ is given in \cref{piiw}.  Since the event in
\eqref{eq:100} is decreasing, in \eqref{eq:condizionato} we can drop
the conditioning on $h\le \phi+\epsilon_i\psi$. The conditioning on
$h\ge \phi_{i,w}-\delta^{1-2\eta}$, instead, cannot be dropped by
monotonicity. However, here \cref{th:letrucdestrentederivees} enters
into play.  Choosing $n=n(\eta)$ large enough in \eqref{eq:letruc} and
applying Tchebyshev's inequality, we see that
\[\pi_{i,w}(h\ge \phi_{i,w}-\delta^{1-2\eta})\ge 1-\delta^{18},\] so that 
the entire conditioning can be dropped in the definition of $\tilde\pi_{i,w}$ and it is enough
to prove \eqref{eq:100} for the unconditional  measure $\pi_{i,w}$.

To this end, note first of all that  \eqref{eq:psinonpsi} taken at $\rho=0$ implies that
\begin{eqnarray}
  \phi_{i,w}(w)= \phi(w)+\epsilon_i\psi(w)-\epsilon_ir_i^2
  (1/2-a)+o(\delta)+O(\epsilon_i^2r_i^2). 
\end{eqnarray}
Recall that 
$\epsilon_ir_i^2=\delta^{1-\eta}$ and that $a$ can be assumed to be smaller than $1/4$, see \cref{thm:equilibrium_development}. We have then
\begin{eqnarray}
  \phi_{i,w}(w)< \phi(w)+\epsilon_i\psi(w)-\delta^{1-\eta} c_0
\end{eqnarray}
for some constant $c_0>0$ uniform with respect to $w,i$. Finally, recalling that $\epsilon_i-\epsilon_{i+1}=\delta$ and that $\psi$ is bounded, we see that also
\begin{eqnarray}
  \phi_{i,w}(w)< \phi(w)+\epsilon_{i+1}\psi(w)-\delta^{1-\eta} \frac{c_0}2.
\end{eqnarray}
Therefore, we have
\begin{eqnarray}
  \pi_{i,w}\Big( h(w) < \phi(w) + \eps_{i+1} \psi(w) \Big)\ge
  \pi_{i,w}\Big( h(w) < \phi_{i,w}(w)+(c_0/2)\delta^{1-\eta} \Big)
\end{eqnarray}
and it follows from \cref{th:letrucdestrentederivees} (choosing $n=n(\eta)$ large enough) that
the latter probability is larger than $1-\delta^{18}$, as desired.

\section{Proof of \cref{thm:equilibrium_development,th:letrucdestrentederivees}}\label{sec:perturbation_continuum}

In this section, we use arguments from the theory of elliptic
PDEs to control the effect on the limit shape, i.e. on the solution of
\eqref{eq:EL}, of a small perturbation of its boundary conditions that
amounts to adding a linear tilt to the boundary datum. The estimates are a bit delicate because we need to control also the higher derivatives of the perturbed solution, uniformly with respect to the domains and the boundary conditions we consider.

Our first goal is \cref{thm:equilibrium_development}, that
is a description of the limit shape $\phi_{i,w}$ in the elliptical
domains $E_{i,w}$  of \cref{def:Eiw}, with boundary condition given by a perturbation of $\phi|_{\partial E_{i,w}}$, with  $\phi$
the limit shape of \cref{ass:ls}.  The first step is to rescale
the problem in order to work in a fixed domain, namely the disk
$B(0,1)$, instead of the ellipses $E_{i,w}$.
\begin{definition}
For $w \in W$,
let $t_w:\R^2\mapsto \R^2$ be the affine map
\begin{eqnarray}
  \label{eq:tw}
  t_w(x)=w+T_w x, \quad T_w=(-\nabla^2\psi(w))^{-1/2} 
  ,
\end{eqnarray}
see \eqref{eq:gradpsi}.
 Note that $t_w(B(0, r_i) ) = E_{i,w}$ for all
 $i$, with $E_{i,w}$ the ellipse of Definition \ref{def:Eiw}. Given $w\in W$, $i\ge1$ 
 we also define rescaled versions $\Phi_{i,w}:B(0, 1)\mapsto \R$ of the limit shapes $\phi_{i,w}:E_{i,w}\mapsto \R$ of \cref{thm:equilibrium_development}
 by
 \begin{eqnarray}
   \label{eq:PhiIW}
\Phi_{i,w}(x) = \frac{1}{r_i}(\phi_{i,w}(  t_w(r_i x)) - \phi_{i,w}(w)).   
 \end{eqnarray}
\end{definition}
Note that the $\Phi_{i,w}$ has been normalized so that
$\Phi_{i,w}(0)=0$ and that, since $\phi_{i,w}$ solves the
Euler-Lagrange equation \eqref{eq:EL} in $E_{i,w}$, $\Phi_{i,w}$ is a
solution $u:B(0,1)\mapsto \R$ of the modified PDE
\[\label{eq:PDEtransformed}
\sum_{a,b=1}^2\sigma^{(w)}_{ab}(\nabla u(x))\partial^2_{x_a x_b}u(x)=0
\]
where, denoting $\Sigma$ and $\Sigma^{(w)}$ the $2\times 2$ matrices of elements $\sigma_{ab}$ and $\sigma^{(w)}_{ab}$ respectively, we have
\begin{eqnarray}
  \label{eq:sS}
  \Sigma^{(w)}(\cdot)=(T_w)^{-1} \Sigma((T_w)^{-1}\cdot)(T_w)^{-1}.
\end{eqnarray}
Note that, like \eqref{eq:EL}, Eq. \eqref{eq:PDEtransformed} is a non-linear elliptic PDE.

In analogy with the rescaled, perturbed
limit shapes $\Phi_{i,w}$ just defined, we introduce also the
rescaled, unperturbed limit shapes. Namely, with $\phi$ denoting the limit shape in
\cref{ass:ls}:
\begin{definition}
  \label{def:W}
Given $v_0>0$ we let
$\mathcal{W}$ be the following set of pairs
consisting of a point and a $C^\infty$ function from $B(0,1)$ to $\R$:
\[
\mathcal{W} = \{(w,  \Phi) : w \in \overline{W}, \Phi:B(0,1)\mapsto \R,    \Phi(\cdot) = \frac{1}{r}(\phi( t_w(r\cdot)) - \phi(w)) \text{ for some } r\in [0,v_0]\},
\]
where  for $r = 0$, by convention the equation for $\Phi$ denotes the linear map
\begin{eqnarray}
  \label{eq:linphi}
   \Phi(\cdot):=\langle  \nabla \phi(w) , t_w (\cdot) - w \rangle. 
\end{eqnarray}  
\end{definition}
The constant $v_0$ will be taken sufficiently small later, see
\cref{rem:r0} and the proof of Corollary \ref{cor:rog}.

\begin{remark}
    In the rest of this section, we use the notation
    $W^{k,p}(U)$ and $W^{k,p}_0(U)$ to denote the Sobolev spaces of
    functions with derivatives of order up to $k$ belonging to $L^p$
    in the domain $U$, and the subscript zero indicates that functions
    have ``zero boundary conditions'' at $\partial U$. When no confusion arises, we omit the argument $U$. For details on Sobolev spaces see
    \cite[Chapter 8]{krilov}, where these spaces are denoted $W^k_p$
    and ${\stackrel 0 {W^k_p}}$, respectively.
  
  In the following, whenever Sobolev spaces $W^{k,p}$ and
  Sobolev norms $\|\cdot\|_{k,p}$ appear, one should keep in mind
  that, in view of \cref{prop:fluctuations_reference}, we need the
  statements for $k=33, p=3$.  We write most statements for generic
  exponents $k,p>2$ in order to emphasize that the specific values
  just mentioned play no particular role in the proofs of this section (see also
  \cref{rem:embed}).
\end{remark}

Note that, given $r>0$ and $w$, the function $\Phi$ in \cref{def:W} is
simply the limit shape $\phi$, restricted to an ellipse centered at
$w$ and of size $r$, up to an affine transformation of space that
turns its domain into the unit disk $B(0,1)$. As is the case for
$\Phi_{i,w}$, $\Phi$ is normalized to equal zero at the center of the
ball $B(0,1)$, and it satisfies the PDE \eqref{eq:PDEtransformed}.

\begin{lemma}\label{lem:compact}
	Consider the natural topology on $\mathcal W$ induced by the Euclidean distance in $\R^2$ for the coordinate $w$, and the $\|\cdot\|_{k,p}$ Sobolev norm on functions in $B(0,1)$ for the coordinate $\Phi$. With this topology, $\mathcal{W}$ is a compact set.
\end{lemma}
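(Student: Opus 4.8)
The plan is to realize $\mathcal W$ as a continuous image of a compact set, exploiting the explicit parametrization by $(w,r)$ together with elliptic regularity. First I would observe that $\mathcal W$ is parametrized by the map $\Psi: \overline W \times [0,v_0] \to \overline W \times W^{k,p}(B(0,1))$ sending $(w,r)$ to $(w,\Phi^{(w,r)})$, where $\Phi^{(w,r)}(\cdot) = \frac1r(\phi(t_w(r\cdot))-\phi(w))$ for $r>0$ and $\Phi^{(w,0)}$ is the linear map \eqref{eq:linphi}; here $t_w$ depends smoothly on $w$ since $\nabla^2\psi(w)$ is a smooth, uniformly nondegenerate diagonal matrix on $\overline W$ (see \eqref{eq:gradpsi}). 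The domain $\overline W\times[0,v_0]$ is compact, so it suffices to show $\Psi$ is continuous into $\overline W\times W^{k,p}(B(0,1))$; then $\mathcal W = \Psi(\overline W\times[0,v_0])$ is compact as the continuous image of a compact set, and in particular the coordinate-$w$ projection and the Sobolev-norm coordinate are exactly the topology described in the statement.

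The continuity of $w\mapsto w$ is trivial, so the content is continuity of $(w,r)\mapsto \Phi^{(w,r)}$ in $\|\cdot\|_{k,p}$. For $r>0$ this is elementary: $\Phi^{(w,r)}$ is an affine rescaling of the fixed $C^\infty$ function $\phi$, and $\phi$ is $C^\infty$ (hence $C^{k}$ with all derivatives uniformly bounded) on a neighborhood of $\overline W$ — recall $\overline W\subset U$ by \cref{assu:W} and $\phi$ is smooth on $U$ — so $t_w(r\cdot)$ maps $B(0,1)$ into a compact subset of $U$ uniformly over $(w,r)\in\overline W\times[0,v_0]$ once $v_0$ is small, and the chain rule expresses each derivative $\partial^\alpha \Phi^{(w,r)}(x)$ ($1\le|\alpha|\le k$) as $r^{|\alpha|-1}$ times a polynomial in the entries of $T_w$ and the derivatives of $\phi$ evaluated at $t_w(rx)$; all of these depend jointly continuously on $(w,r,x)$, and by compactness of $B(0,1)$ the convergence is uniform, giving $W^{k,p}$-continuity. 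The only delicate point is continuity at $r=0$: as $r\to 0$ one checks from a Taylor expansion of $\phi$ around $w$ that $\Phi^{(w,r)}(x) = \langle\nabla\phi(w),T_w x\rangle + O(r)$ with all derivatives up to order $k$ converging, uniformly in $w$, to those of the linear map \eqref{eq:linphi}; indeed $\Phi^{(w,r)}(x)-\Phi^{(w,0)}(x) = \frac1r\big(\phi(w+rT_wx)-\phi(w)-r\langle\nabla\phi(w),T_wx\rangle\big)$, whose $W^{k,p}$-norm is bounded by $C r \sup_{|\alpha|\le k+1}\|\partial^\alpha\phi\|_{L^\infty}$ near $w$ by Taylor's theorem with integral remainder. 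Hence $\Psi$ is continuous, so $\mathcal W$ is compact.

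\textbf{Main obstacle.} I expect the only real subtlety to be the behavior at $r=0$: one must verify that the degenerate limit of the rescaling — which morally turns the curved limit shape into its tangent plane at $w$ — is taken in the strong $W^{k,p}$ topology and uniformly in $w$, rather than merely pointwise. This is where one uses that $\phi$ has one more derivative than the order $k$ appearing in the norm (true since $\phi\in C^\infty(U)$) so that the Taylor remainder can be controlled in $W^{k,p}$; if $\phi$ were only $C^k$ this step would fail. A secondary, purely bookkeeping point is that $v_0$ must be chosen small enough that $t_w(B(0,v_0))\subset\subset U$ for all $w\in\overline W$, which is possible since $\overline W$ is compact, $\overline W\subset U$, and $\|T_w\|$ is bounded on $\overline W$; this is the same smallness of $v_0$ already flagged after \cref{def:W}. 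No use of the nonlinear PDE \eqref{eq:PDEtransformed} is needed for compactness itself — that the elements of $\mathcal W$ solve it is automatic from the construction and will only matter in the subsequent perturbation arguments.
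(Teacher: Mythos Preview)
Your proposal is correct and follows essentially the same approach as the paper: both realize $\mathcal W$ as the image of the compact set $\overline W\times[0,v_0]$ under the parametrization $(w,r)\mapsto(w,\Phi^{(w,r)})$ and reduce to showing continuity of this map, with the only nontrivial point being the limit $r\to 0$, handled via the $C^\infty$ regularity of $\phi$ on $\overline W$. Your treatment is slightly more explicit (writing out the Taylor remainder and the chain-rule expression for $\partial^\alpha\Phi^{(w,r)}$), but the argument and its ingredients are the same as the paper's.
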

\begin{proof}
  By construction, $\mathcal{W}$ is the image of $\overline{W}\times [0,v_0]$ via the map
  \[(w,r) \to \big(w,\Phi),\quad \Phi(\cdot):=\frac{1}{r}(\phi( t_w(r\cdot)) - \phi(w))
    \]
    so it is enough to show that this function is continuous. For any
    $r > 0$ this is trivial. For the continuity for $r=0$, note that
    any derivative of order 2 or more of $ \Phi$ converges to
    $0$ as $r \to 0$ uniformly with respect to $w$ (because the limit shape $\phi$ is $C^\infty$ in $\overline W$, thanks to  \cref{ass:ls}), so that $\Phi$ converges
    to the linear map \eqref{eq:linphi} in $\|\cdot\|_{k,p}$
    topology uniformly as $r \to 0$. This linear map depends
    continuously on $w$ and this completes the proof.
  \end{proof}
  We need two more definitions before entering the heart of the proofs.
  
  \begin{definition}
    Given $v_0>0$, let 
 \[\mathcal{L}  = \{ \ell : \R^2 \to \R \text{ linear }, \norm{\ell} \leq v_0\}\] be the compact set of linear maps on $\R^2$ of norm at most $v_0$ (the choice of norm is arbitrary and unimportant for the following).
\end{definition}

\begin{definition} For any $(w, \Phi)\in \mathcal{W}$, we let $ F_{w,  \Phi}$ be the following map that takes as input a pair  $(\ell,f)\in\mathcal L\times C^2_0(B(0,1)) $ (with $C^2_0(B(0,1))$ the set of $C^2$ functions $f:B(0,1)\mapsto \mathbb R$ that vanish on $\partial B(0,1)$) and outputs the function $F_{w,\Phi}(\ell,f):B(0,1)\mapsto \R$:
  \begin{eqnarray}
    \label{riva}
	F_{w,\Phi}(\ell,f)(x)= \sum_{i,j=1}^2 \sigma^{(w)}_{ij}(\nabla(\Phi+\ell+f)(x))\partial^2_{x_i x_j}( \Phi+f)(x), \quad x\in B(0,1), 
	\end{eqnarray}
        with $\sigma^{(w)}(\cdot)$ defined as in \eqref{eq:PDEtransformed}.
      \end{definition}
      \begin{remark}
        \label{rem:r0}
      Recall that 
      $\sigma_{ab}$ is  well defined only provided that its argument is in the
      Newton polygon $\mathbb T$, i.e., $\sigma_{ab}^{(w)}(u)$ is well defined only when  $(T_w^{-1})u\in\mathbb T $, recall \eqref{eq:sS}. On the other hand, $\nabla \Phi (x)=T_w \nabla \phi(t_w(r x))$ and, by assumption \cref{ass:ls}, $\nabla\phi$ is in the Newton polygon,   at distance at least $a>0$ from its boundary. In the following, therefore, we assume that the constant $v_0$ in the definition of $\mathcal L$ and
      the sup-norm of $\nabla f$ on $B(0,1)$ are small enough (as a function of the constant $a$) so that $\sigma_{ab}^{(w)}$ is well-defined.
      \end{remark}
      \begin{remark}
      Note that, by construction, $F_{w,\Phi}(0,0)=0$ (the identically
      zero function on $B(0,1)$), because, as observed above, $\Phi$ is just a rescaled
      version of the limit shape and therefore solves the PDE \eqref{eq:PDEtransformed}. Also, if $0\le i\le i_{max}$, letting
      \[
      \label{eq:wayofre}\Phi(\cdot):=\frac1{r_i}(\phi(t_w(r_i\cdot))-\phi(w))\] and 
      \[\ell(\cdot):= \eps_i \langle \nabla \psi(w) , t_w(\cdot)-w \rangle,\label{ltw}\] then
      \[F_{w, \Phi}( \ell ,  \Phi_{i,w} - 
      \Phi-\ell) = 0,\]
    with $\Phi_{i,w}$ defined in \eqref{eq:PhiIW}. This is just a way of rewriting that $\Phi_{i,w}$ is
    a solution of \eqref{eq:PDEtransformed}.
\label{rem:ms}
      \end{remark}

      In the following, we will let the linear maps
      $d_1 F_{w,\Phi}(\ell,f),d_2F_{w,\Phi}(\ell,f)$ denote the
      differential maps of $F_{w,\Phi}$ with respect to the first or
      second argument, computed at $(\ell,f)$. When the differentials
      are computed at $(0,0)$, we write simply $d_1F_{w,\Phi}, d_2 F_{w,\Phi}$ for
      brevity.  A direct computation gives that $d_2 F_{w,\Phi}(\ell,0)$ is a linear
      elliptic operator with non-constant but smooth coefficients
      given by
\begin{multline}
\label{eq:d2F}
d_2 F_{w,\Phi}(\ell,0)\circ f=\sum_{i,j=1}^2\sigma^{(w)}_{ij}(\nabla(\Phi+\ell))\partial^2_{x_i x_j}f\\+\sum_{k=1}^2\partial_{x_k}f\sum_{i,j=1}^2\sigma^{(w)}_{ijk}(\nabla(\Phi+\ell))\partial^2_{x_i x_j}\Phi,
\end{multline}
with $\sigma^{(w)}_{ijk}$ the derivative of $\sigma^{(w)}_{ij}$ with respect to its $k^{th}$ argument.
As for  $d_1 F_{w,\Phi}$, it transforms a linear map $\ell$ into 
\begin{eqnarray}
\label{eq:d1F}
d_1 F_{w,\Phi}\circ \ell=\sum_{k=1}^2\ell_k\sum_{i,j=1}^2\;\sigma^{(w)}_{ijk}(\nabla\Phi)\partial^2_{x_i x_j}\Phi
, \quad \ell_k:=\partial_{x_k}\ell(x)\end{eqnarray}
which is a function from $B(0,1)$ to $\R$.

Let us now start the proof of
\cref{thm:equilibrium_development,th:letrucdestrentederivees} using
the above definitions. The first step is based on a classical result
on linear elliptic PDEs.

\begin{proposition}\label{claim:inversibilite} For every pair of  integers $k,p>2$,  for every
  $(w, \Phi)\in\mathcal{W}$ and every $\ell \in \mathcal{L}$,
the map  $f\mapsto d_2 F_{w,\Phi}(\ell,0)\circ f$ is a continuous invertible map from
  $W^{k, p}_0:=W^{k, p}_0(B(0,1))$ to $W^{k-2, p}:=W^{k-2, p}(B(0,1))$. Furthermore, there
  exists a constant $C$ such that for all
  $f \in W^{k, p}_0$,
  $ \norm{ f}_{{k, p}} \leq C \norm{d_2 F(\ell,0)\circ f}_{{k-2,
      p}}$. The constant $C$ is uniform over admissible choices
  $(w, \Phi)\in \mathcal W, \ell\in\mathcal L$.%
\end{proposition}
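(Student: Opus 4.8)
The plan is to read off $L=L_{w,\Phi,\ell}:=d_2F_{w,\Phi}(\ell,0)$ from \eqref{eq:d2F} as a second order linear \emph{elliptic} operator on the \emph{fixed} domain $B(0,1)$, with smooth coefficients and, crucially, \emph{no zeroth order term}, and then to run the classical scheme ``a priori estimate $\Rightarrow$ uniqueness $\Rightarrow$ existence by the method of continuity'', taking care that every constant comes out uniform over the compact parameter set $\mathcal W\times\mathcal L$. The only genuinely non-routine point will be this uniformity, and it is exactly what the compactness in \cref{lem:compact} is there to provide; a secondary preliminary point will be to make sure $\nabla(\Phi+\ell)$ stays in a fixed compact subset of the interior of the Newton polygon, which is where the smallness of $v_0$ (\cref{rem:r0}) and the non-degeneracy \cref{ass:ls} are used, so that $\sigma^{(w)}$ and all its derivatives entering \eqref{eq:d2F} are well defined and uniformly controlled.

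First I would check uniform ellipticity and uniform bounds on the coefficients. The leading coefficient of $L$ is $x\mapsto\Sigma^{(w)}(\nabla(\Phi+\ell)(x))=T_w^{-1}\Sigma\big(T_w^{-1}\nabla(\Phi+\ell)(x)\big)T_w^{-1}$, see \eqref{eq:sS}. By \eqref{eq:gradpsi} and boundedness of $\overline W$, the matrices $T_w=(-\nabla^2\psi(w))^{-1/2}$ and $T_w^{-1}$ have norms bounded above and below uniformly in $w\in\overline W$; and, as recalled in \cref{rem:r0}, $T_w^{-1}\nabla\Phi(x)=\nabla\phi(t_w(rx))$ lies, by \cref{ass:ls}, at distance $\ge a$ from $\partial\mathbb T$, so for $v_0$ small $T_w^{-1}\nabla(\Phi+\ell)(x)$ stays in a fixed compact subset of the interior of $\mathbb T$, where $\Sigma$ is smooth and strictly positive definite. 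This gives $\lambda I\le\Sigma^{(w)}(\nabla(\Phi+\ell))\le\Lambda I$ with $0<\lambda\le\Lambda$ independent of $(w,\Phi)\in\mathcal W$ and $\ell\in\mathcal L$. For the higher regularity I would use that $\Phi(x)=\tfrac1r\big(\phi(w+rT_wx)-\phi(w)\big)$ with $r\le v_0$ and $\phi\in C^\infty$ near $\overline W$, so that $\|\Phi\|_{C^m(B(0,1))}$ is bounded uniformly over $\mathcal W$ for every $m$; since $\Sigma^{(w)}$ and all its derivatives depend smoothly on $w\in\overline W$ and $\nabla\ell$ is constant with $\|\ell\|\le v_0$, all coefficients of $L$ appearing in \eqref{eq:d2F} are then bounded in $C^m(B(0,1))$ for every $m$, uniformly over $\mathcal W\times\mathcal L$.

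Next I would invoke the classical $W^{k,p}$ elliptic estimates on the smooth bounded domain $B(0,1)$ (see e.g. \cite{krilov}): for $f\in W^{k,p}_0$,
\begin{equation}\label{eq:apriori-plan}
\|f\|_{k,p}\le C_0\big(\|Lf\|_{k-2,p}+\|f\|_{L^p}\big),
\end{equation}
with $C_0$ depending only on $k,p$, the ball, the ellipticity constants $\lambda,\Lambda$ and finitely many $C^m$-norms of the coefficients of $L$ — hence $C_0$ uniform over $\mathcal W\times\mathcal L$ by the previous step. To absorb the lower order term \emph{uniformly} I would argue by compactness and contradiction: if $\|f\|_{k,p}\le C\|L_{w,\Phi,\ell}f\|_{k-2,p}$ failed for every $C$, there would be $(w_n,\Phi_n)\in\mathcal W$, $\ell_n\in\mathcal L$, $f_n\in W^{k,p}_0$ with $\|f_n\|_{k,p}=1$ and $\|L_nf_n\|_{k-2,p}\to0$, where $L_n:=L_{w_n,\Phi_n,\ell_n}$; using compactness of $\mathcal W$ (\cref{lem:compact}) and of $\mathcal L$, pass to a subsequence with $(w_n,\Phi_n)\to(w_\infty,\Phi_\infty)\in\mathcal W$ and $\ell_n\to\ell_\infty\in\mathcal L$, whence the coefficients of $L_n$ converge in $C^0$ (indeed in every $C^m$) to those of $L_\infty:=L_{w_\infty,\Phi_\infty,\ell_\infty}$; by Rellich and weak compactness, pass further to $f_n\rightharpoonup f_\infty$ in $W^{k,p}_0$ and $f_n\to f_\infty$ in $W^{k-1,p}$, hence in $L^p$. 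Then \eqref{eq:apriori-plan} forces $\|f_\infty\|_{L^p}\ge 1/C_0>0$, so $f_\infty\not\equiv0$; on the other hand, passing to the distributional limit in $L_nf_n$ (the leading coefficients converge uniformly while $\partial^2_{x_ix_j}f_n\rightharpoonup\partial^2_{x_ix_j}f_\infty$ in $L^p$, and the first-order part converges strongly) gives $L_\infty f_\infty=0$ with $f_\infty\in W^{k,p}_0$; since $L_\infty$ has no zeroth order term, the weak maximum principle for elliptic operators yields $f_\infty\equiv0$ — a contradiction. This produces $\|f\|_{k,p}\le C\|L_{w,\Phi,\ell}f\|_{k-2,p}$ with $C$ uniform, hence injectivity of $L_{w,\Phi,\ell}$ (its continuity $W^{k,p}_0\to W^{k-2,p}$ being clear from the boundedness of its coefficients).

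Finally I would get surjectivity by the method of continuity: for $t\in[0,1]$ set $L_t=(1-t)\Delta+tL_{w,\Phi,\ell}$, note that its leading part interpolates between $I$ and $\Sigma^{(w)}(\nabla(\Phi+\ell))$ so $L_t$ is uniformly elliptic (uniformly also in $t$), has $C^m$ coefficients and no zeroth order term, so the same argument gives $\|f\|_{k,p}\le C\|L_tf\|_{k-2,p}$ with $C$ uniform in $t$ and in $(w,\Phi,\ell)$; since $L_0=\Delta:W^{k,p}_0\to W^{k-2,p}$ is onto (solvability of the Dirichlet problem for the Laplacian in $W^{k,p}$), the method of continuity gives that $L_1=L_{w,\Phi,\ell}$ is onto, and together with the uniform estimate this shows $L_{w,\Phi,\ell}$ is a continuous isomorphism with $\|L_{w,\Phi,\ell}^{-1}g\|_{k,p}\le C\|g\|_{k-2,p}$, $C$ uniform over $\mathcal W\times\mathcal L$ — which is the assertion. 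The main obstacle, as announced, is purely the bookkeeping of uniformity; everything else is classical elliptic theory on a fixed smooth domain.
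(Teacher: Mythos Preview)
Your argument is correct, but it takes a noticeably different route from the paper's. The paper's proof is much shorter and softer: it simply cites \cite[Th.\ 11.3.2 and Th.\ 9.2.3]{krilov} to get, for each fixed $(w,\Phi,\ell)$, that $d_2F_{w,\Phi}(\ell,0)$ is a continuous bijection $W^{k,p}_0\to W^{k-2,p}$ with bounded inverse. For the uniformity of the constant $C$, the paper does not redo any PDE estimate at all: it just observes that $(w,\Phi,\ell)\mapsto d_2F_{w,\Phi}(\ell,0)$ is continuous into the space of bounded operators, hence so is the inverse (since it exists everywhere), and then invokes compactness of $\mathcal W\times\mathcal L$ from \cref{lem:compact} to conclude that the norm of the inverse is bounded.

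By contrast, you reconstruct the elliptic machinery from scratch: uniform ellipticity and coefficient bounds, then the Schauder-type a priori estimate with lower-order term, then a compactness--contradiction argument (Rellich plus weak limits plus the maximum principle, exploiting that $L$ has no zeroth-order term) to absorb the lower-order term \emph{uniformly}, and finally the method of continuity for surjectivity. Both approaches hinge on the compactness of $\mathcal W\times\mathcal L$, but you use it inside the contradiction step, whereas the paper uses it only at the very end on the already-existing inverse. What your approach buys is self-containment and an explicit display of why the structural feature ``no zeroth-order term'' matters; what the paper's approach buys is brevity and avoiding any redo of estimates that are already packaged in \cite{krilov}.
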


\begin{proof}[Proof of Proposition \ref{claim:inversibilite}]
  The map $f\mapsto d_2 F_{w,\Phi}(\ell,0)\circ f$ is trivially
  continuous, see \eqref{eq:d2F}, so the only question is the existence
  and continuity of the inverse map.  Since the disk $B(0,1)$ is a smooth
  bounded domain and the coefficients of the operator $d_2 F_{w,\Phi}(\ell,0)$
  are $C^\infty$, 
  \cite[Th. 11.3.2 and Th. 9.2.3]{krilov} imply that $d_2 F_{w,\Phi}(\ell, 0)$ is invertible and that
  $[d_2 F_{w,\Phi}(\ell,0)]^{-1}$ is continuous (and therefore bounded) from $W^{k,p}_0$ to
  $W^{k-2, p}$.
	
  For the uniform bound on the norm of
  $[d_2 F_{w,\Phi}(\ell,0)]^{-1}$, first note that the map
  $(w, \Phi,\ell) \to d_2 F_{w,\Phi}( \ell, 0)$ is continuous when
  considering the topology described in Lemma \ref{lem:compact} on the
  domain $\mathcal W\times\mathcal L$ and the operator norm from
  $W^{k, p}_0 $ to $W^{k-2,p}$ on the codomain. Since the inverse is
  defined everywhere, this implies that
  $(w, \Phi, \ell) \to [d_2F _{w,\Phi}( \ell, 0)]^{-1}$ is also
  continuous. We conclude because $\mathcal{W} \times \mathcal{L} $ is
  compact by Lemma \ref{lem:compact}.
\end{proof}

The main point in the proof of \cref{thm:equilibrium_development} is
the following implicit function theorem. For
$(w, \Phi) \in \mathcal{W}$ and $\ell \in \mathcal{L}$, let
\begin{eqnarray}
  \label{eq:chichi}
\chi = -(d_2F_{w, \Phi})^{-1} \circ d_1 F_{w, \Phi} \circ \ell,  
\end{eqnarray}
i.e $\chi\in W^{k,p}_0$ is the function $\chi:B(0,1)\mapsto \R$ which
solves the linear elliptic PDE
\begin{eqnarray}
  \label{eq:chichi2}
 d_2 F_{w, \Phi}\circ \chi = - d_1 F_{w, \Phi}\circ \ell .  
\end{eqnarray}
\begin{proposition}
  \label{prop:analysefonct} If the constant $v_0$ in the definition of
  $\mathcal W,\mathcal L$ is chosen small enough, then for
  every $(w, \Phi) \in \mathcal{W}$ and every $\ell \in \mathcal{L}$,
  there exists an unique $f\in W^{k,p}_0(B(0,1))$ such that
  $F_{w, \Phi}(\ell, f )=0$. Furthermore the map
  $(w, \Phi, \ell) \to f$ is continuous 
  and satisfies
	\[\label{eq:implicit}
	\norm{f -\chi}_{{k, p}} \leq C \norm{F_{w,\Phi}( \ell, \chi)}_{{k-2, p}}
	\]
	where the constant $C$ is uniform with respect to $(w, \Phi)\in\mathcal W$ and $\ell\in\mathcal L$.
      \end{proposition}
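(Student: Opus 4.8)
The plan is to prove Proposition \ref{prop:analysefonct} by a quantitative implicit function theorem, implemented as a Newton-type contraction-mapping argument centered at the first-order approximation $\chi$ defined in \eqref{eq:chichi}--\eqref{eq:chichi2}. The first step is to record the analytic properties of $F_{w,\Phi}$ viewed as a map from $W^{k, p}_0(B(0,1))$ to $W^{k-2, p}(B(0,1))$: it is well-defined and $C^2$, and its first two derivatives are bounded \emph{uniformly} over $(w,\Phi)\in\mathcal W$ and $\ell\in\mathcal L$. Well-definedness uses the Sobolev embedding $W^{k,p}_0\hookrightarrow C^2_0$ (valid since $p>2$ gives $2/p<1<k-2$), which also keeps $\nabla(\Phi+\ell+f)$ in a fixed compact subset of the interior of the Newton polygon once $v_0$ and $\norm{f}_{k,p}$ are small, so that the smooth functions $\sigma^{(w)}_{ij},\sigma^{(w)}_{ijk}$ can be composed (Remark \ref{rem:r0}), together with the fact that $W^{k-1,p}(B(0,1))$ is a Banach algebra acting as a multiplier on $W^{k-2,p}$. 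The uniformity over $\mathcal W$ is exactly where Lemma \ref{lem:compact} is used: the functions $\Phi$ have uniformly bounded $W^{k,p}$-norms and the coefficient matrices $\Sigma^{(w)}$ vary continuously over the compact set $\overline W$.

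Next I would set up the iteration. By Proposition \ref{claim:inversibilite}, $L:=d_2 F_{w,\Phi}$ is invertible with $\norm{L^{-1}}\le C_0$, $C_0$ uniform, so fixed points of $\Psi(f):=f-L^{-1}\circ F_{w,\Phi}(\ell,f)$ on $W^{k,p}_0$ are exactly the solutions of $F_{w,\Phi}(\ell,f)=0$. For $f$ in a small $W^{k,p}_0$-ball one has $d\Psi(f)=L^{-1}\circ\big(d_2F_{w,\Phi}-d_2F_{w,\Phi}(\ell,f)\big)$, so the uniform Lipschitz bound on $d_2F_{w,\Phi}$ from the first step gives $\norm{d\Psi(f)}_{op}\le C_0\,C_1(\norm{\ell}+\norm{f}_{k,p})$. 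Since $\norm{\chi}_{k,p}\le C_0\norm{d_1F_{w,\Phi}\circ\ell}_{k-2,p}\le C v_0$ (by \eqref{eq:chichi} and \eqref{eq:d1F}), for $v_0$ small $\Psi$ is a $\tfrac12$-contraction on the ball $B(\chi,\rho)\subset W^{k,p}_0$ with $\rho:=2C_0\norm{F_{w,\Phi}(\ell,\chi)}_{k-2,p}$, provided $\norm{\Psi(\chi)-\chi}=\norm{L^{-1}F_{w,\Phi}(\ell,\chi)}_{k,p}\le\rho/2$, which holds by definition of $\rho$, and provided $\rho$ is itself small; this last point follows because, by the definition \eqref{eq:chichi2} of $\chi$ and $F_{w,\Phi}(0,0)=0$, $F_{w,\Phi}(\ell,\chi)$ is the second-order Taylor remainder of $F_{w,\Phi}$ at $(0,0)$, hence $O\big((\norm{\ell}+\norm{\chi}_{k,p})^2\big)=O(v_0^2)$. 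Thus $\Psi$ has a unique fixed point $f$ in $B(\chi,\rho)$, and the contraction estimate yields $\norm{f-\chi}_{k,p}\le 2\norm{\Psi(\chi)-\chi}_{k,p}\le 2C_0\norm{F_{w,\Phi}(\ell,\chi)}_{k-2,p}$, which is \eqref{eq:implicit} with $C=2C_0$. Continuity of $(w,\Phi,\ell)\mapsto f$ is automatic from the uniformity of the contraction and the joint continuity of $(w,\Phi,\ell)\mapsto F_{w,\Phi}(\ell,\cdot)$.

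For the uniqueness assertion I would argue separately that any two admissible solutions $f_1,f_2\in W^{k,p}_0$ (for which $\nabla f_i$ is small enough that $F_{w,\Phi}(\ell,\cdot)$ is defined) satisfy $0=F_{w,\Phi}(\ell,f_1)-F_{w,\Phi}(\ell,f_2)=\bar L(f_1-f_2)$, where $\bar L=\int_0^1 d_2F_{w,\Phi}(\ell,f_2+t(f_1-f_2))\,dt$ is, by the structure \eqref{eq:d2F}, a linear operator with only second- and first-order terms and continuous coefficients, uniformly elliptic for $v_0$ small. Since $f_1-f_2\in W^{k,p}_0$ vanishes on $\partial B(0,1)$, the maximum principle (equivalently, the invertibility in Proposition \ref{claim:inversibilite} applied to $\bar L$ after noting it has the same form) forces $f_1=f_2$.

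The main obstacle is the first step: establishing the \emph{uniform} smoothness and Lipschitz estimates for the nonlinear composition operator $F_{w,\Phi}$ on the Sobolev scale, simultaneously over the two compact families $\mathcal W$ and $\mathcal L$. This requires carefully combining the Sobolev embeddings, the algebra/multiplier properties of $W^{k-1,p}$ and $W^{k-2,p}$, the uniform control of $\nabla\Phi$ away from $\partial\mathbb T$ coming from Assumption \ref{ass:ls} and Remark \ref{rem:r0}, and the compactness of $\mathcal W$ from Lemma \ref{lem:compact}. Everything after that is the standard quantitative implicit function theorem, and the remaining steps are routine.
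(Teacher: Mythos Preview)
Your proposal is correct and takes essentially the same approach as the paper: a Newton-type iteration with the differential frozen at $(0,0)$, i.e.\ iterating $g\mapsto g-(d_2F_{w,\Phi})^{-1}F_{w,\Phi}(\ell,g)$ starting from $g_0=\chi$, with uniformity coming from compactness of $\mathcal W\times\mathcal L$ and Proposition~\ref{claim:inversibilite}. The only cosmetic differences are that the paper writes out the iteration and proves geometric decay by induction rather than packaging it as a Banach contraction, and it appeals to uniqueness of the limit-shape PDE rather than to the averaged-operator maximum-principle argument you give.
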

      \begin{remark}
        \label{rem:embed}
We recall also (see for instance \cite[Th. 10.4.10]{krilov}) that for integer $k\ge1 $ and $p>2$, 
$W^{k,p}(B(0,1))$ is continuously embedded into 
\[C^{k-1,1-2/p}(B(0,1))\subset W^{k-1,\infty}(B(0,1))\] where $C^{k-1,1-2/p}(B(0,1))$ is the H\"older space of functions whose derivatives up
to order $k-1$ are $(1-2/p)$-H\"older continuous.
In particular,
\begin{eqnarray}
  \label{eq:sobolev}
  \|u\|_{C^{k-1,1-2/p}(B(0,1))}\le N   \|u\|_{k,p}
\end{eqnarray}
for some constant $N=N(k,p)$.
Therefore, taking the integers $k,p>2$ we have that $f$ in \cref{prop:analysefonct} is actually twice continuously differentiable on $B(0,1)$ and therefore is a classical solution of the PDE $F_{w,\Phi}(\ell,f)=0$, see \eqref{riva}.
      \end{remark}

\begin{corollary}
  \label{cor:rog}
  The family $\{\Phi_{i,w}\}_{i\le i_{max},w\in W}$ is precompact in $W^{k,p}(B(0,1))$. \end{corollary}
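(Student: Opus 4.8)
The plan is to realize every $\Phi_{i,w}$ through the decomposition recorded in \cref{rem:ms} and to read off precompactness from the compactness of $\mathcal W$ (\cref{lem:compact}), the compactness of $\mathcal L$, and the continuity, over the compact parameter set $\mathcal W\times\mathcal L$, of the solution map $(w,\Phi,\ell)\mapsto f$ of \cref{prop:analysefonct}. First I would fix $v_0$ small enough that \cref{prop:analysefonct} and \cref{rem:r0} apply, and then $\epsilon_0$ small enough, as a function of $v_0,W,\xi$, that $\|\ell^{(i,w)}\|\le C(W,\xi)\,\epsilon_i\le C(W,\xi)\,\epsilon_0\le v_0$ for all $i\le i_{max}$ and $w\in\overline W$, where $\ell^{(i,w)}(\cdot):=\epsilon_i\langle\nabla\psi(w),t_w(\cdot)-w\rangle$; since also $r_i\le r_{i_{max}}=\delta^{3\eta/2}\le v_0$ for $\delta$ small (recall \eqref{eq:r0rimax}), the pair $(w,\Phi^{(i,w)})$, with $\Phi^{(i,w)}(\cdot):=\frac1{r_i}(\phi(t_w(r_i\cdot))-\phi(w))$, belongs to $\mathcal W$ and $\ell^{(i,w)}\in\mathcal L$.

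By \cref{rem:ms} one has $F_{w,\Phi^{(i,w)}}(\ell^{(i,w)},\Phi_{i,w}-\Phi^{(i,w)}-\ell^{(i,w)})=0$. The right-hand side of \eqref{riva} is unchanged under adding a constant to its second argument, and, directly from the form \eqref{eq:bcku1} of the boundary datum of $\phi_{i,w}$ --- which differs from $\phi|_{\partial E_{i,w}}$ by an affine function whose linear part, rescaled to $B(0,1)$, is exactly $\ell^{(i,w)}$ --- the function $g^{(i,w)}:=\Phi_{i,w}-\Phi^{(i,w)}-\ell^{(i,w)}$ is equal to a constant $c^{(i,w)}$ on $\partial B(0,1)$. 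Hence $g^{(i,w)}-c^{(i,w)}\in W^{k,p}_0(B(0,1))$ (it is $C^\infty$ up to $\partial B(0,1)$ by elliptic regularity and vanishes there) and $F_{w,\Phi^{(i,w)}}(\ell^{(i,w)},g^{(i,w)}-c^{(i,w)})=0$, so the uniqueness clause in \cref{prop:analysefonct} identifies $g^{(i,w)}-c^{(i,w)}$ with $f^{(i,w)}:=f(w,\Phi^{(i,w)},\ell^{(i,w)})$. This yields $\Phi_{i,w}=\Phi^{(i,w)}+\ell^{(i,w)}+f^{(i,w)}+c^{(i,w)}$, and evaluating at $x=0$, where $\Phi_{i,w}$, $\Phi^{(i,w)}$ and $\ell^{(i,w)}$ all vanish, gives $c^{(i,w)}=-f^{(i,w)}(0)$.

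It then suffices to check that each of the four families in this decomposition is precompact in $W^{k,p}(B(0,1))$. The family $\{\Phi^{(i,w)}\}$ lies in the image of the compact set $\mathcal W$ under the projection to its second coordinate, hence in a compact set; $\{\ell^{(i,w)}\}$ lies in the compact set $\mathcal L$; $\{f^{(i,w)}\}$ lies in the image of the compact set $\mathcal W\times\mathcal L$ under the continuous solution map of \cref{prop:analysefonct}, hence in a compact set, and in particular is bounded in $W^{k,p}$, so by the Sobolev embedding \eqref{eq:sobolev} the constants $c^{(i,w)}=-f^{(i,w)}(0)$ are uniformly bounded and $\{c^{(i,w)}\}$ is a bounded --- hence precompact --- subset of the one-dimensional space of constant functions. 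Since a finite Minkowski sum of precompact subsets of a normed space is precompact (being contained in the image of the compact product of their closures under the continuous addition map), $\{\Phi_{i,w}\}_{i\le i_{max},w\in W}$ is precompact. I do not expect a genuine obstacle here: the corollary is a soft consequence of \cref{lem:compact} and \cref{prop:analysefonct}, and the only points requiring care are the verifications $(w,\Phi^{(i,w)})\in\mathcal W$ and $\ell^{(i,w)}\in\mathcal L$ --- which is where the admissible smallness of $v_0$, and of $\epsilon_0$ relative to it, gets pinned down, as anticipated in \cref{rem:r0} --- and the correct handling of the additive constant $c^{(i,w)}$ forced by the normalization $\Phi_{i,w}(0)=0$.
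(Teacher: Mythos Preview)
Your proof is correct and follows essentially the same route as the paper's: verify $(w,\Phi^{(i,w)})\in\mathcal W$ and $\ell^{(i,w)}\in\mathcal L$ from $r_i\le v_0$ and $\epsilon_i\le\epsilon_0$ small, then use the continuity of the solution map $(w,\Phi,\ell)\mapsto f$ of \cref{prop:analysefonct} over the compact set $\mathcal W\times\mathcal L$ to conclude precompactness of $\Phi_{i,w}=\Phi^{(i,w)}+\ell^{(i,w)}+f^{(i,w)}+c^{(i,w)}$. You are in fact more careful than the paper's terse argument about the additive constant $c^{(i,w)}$ coming from the normalization $\Phi_{i,w}(0)=0$, which the paper glosses over when it writes simply that ``$f$ is in this case just $\Phi_{i,w}-\Phi-\ell$''.
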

\begin{proof}[Proof of Corollary \ref{cor:rog}]
  For this, recall \cref{rem:ms}. One has  $r_i\le
  r_{i_{max}}=\delta^{3\eta/2}\le v_0$, with $v_0$ the constant that
  enters the definition of $\mathcal L,\mathcal W$, so that the pair $(w,\Phi)$ belongs to $\mathcal W$. Similarly, 
  the linear map $\ell$  satisfies
  $\ell\in \mathcal L$ provided that the constant $\epsilon_0$
  introduced just before \eqref{eq:A} is small enough (because $v_0$
  in the definition of $\mathcal L$ is fixed, while $\epsilon_i\le
  \epsilon_0$.) The claim of the corollary then follows from \cref{claim:inversibilite} together with Eq.   \eqref{eq:wayofre}, which says that $f$ of \cref{claim:inversibilite} is in this case just $\Phi_{i,w}-\Phi-\ell$.
\end{proof}

\begin{proof}[Proof of \cref{prop:analysefonct}]
      In view of $F_{w,\Phi}(0,0)=0$ and of \eqref{eq:chichi2},
      \cref{prop:analysefonct} is almost the usual statement of the
      implicit function theorem applied to $F_{w,\Phi}$, but with a
      precise estimate on the error bound in \eqref{eq:implicit} which
is      a bit more delicate than usual and which will be important
later.

  First, existence and uniqueness in the statement comes from the
 corresponding statement for the limit shape PDE, equation
  \eqref{eq:EL}. Indeed, $u:=\Phi + f+\ell$ solves
  \eqref{eq:PDEtransformed} with Dirichlet-type boundary conditions on
  $\partial B(0,1)$, which is equivalent to solving \eqref{eq:EL} in
  an ellipse.  However, we will prove existence of $f$ constructively,
  by a fixed point argument; this will give as byproduct the
  claimed continuity and the estimate on the error bound.

  For the moment, let us fix $(\Phi, w)\in\mathcal W, \ell\in\mathcal
  L$.  Since the function $F_{w,\Phi}(\cdot, \cdot )$ is smooth with
  respect to both arguments it is easy to see that, given any constant
  $\mathcal C>0$, we can find $\eps$ small enough such that, for any
  two functions $g$ and $g'$ in $W^{k,p}_0$ with
  $\norm{g}_{k,p},\norm{g'}_{k,p} \leq \eps$, we have
\[\label{eq:boundDLF}
\norm{F_{w,\Phi}(\ell, g' ) - F_{w,\Phi}( \ell, g) - d_2 F_{w,\Phi}(
  \ell, g) \circ (g' - g) }_{{k-2, p}} \leq \mathcal C\norm{g -
  g'}_{{k, p}}.
\]
In particular, it turns out to be convenient to choose
        \begin{eqnarray}
          \label{eq:constC}
          \mathcal C:=\frac{1}{5 \sup_{(w,\Phi)\in\mathcal
              W}\norm{(d_2 F_{w,\Phi})^{-1}}}
        \end{eqnarray}
where here and later in this proof, $\norm{d_2 F_{w,\Phi}^{-1}}$
denotes the operator norm  of $(d_2
F_{w,\Phi})^{-1}$ from $W^{k-2,p}$ to $W^{k,p}_0$. Thanks to \cref{claim:inversibilite}, we have
$\mathcal C>0$ . By continuity of $d_2 F_{w,\Phi}(\cdot,\cdot)$ (the
continuity is uniform w.r.t. $(w,\Phi)\in\mathcal W$ because $\mathcal
W$ is compact by \cref{lem:compact}), for any $g\in W^{k,p}_0$ with
$\norm{g}_{k,p} \leq \eps$ with $\eps$ small enough, we also have
	\[\label{eq:bounddF-1}
          \norm{d_2 F_{w,\Phi}(\ell,g) - d_2 F_{w,\Phi}}
          \leq \mathcal C,
	  \]
          (here the norm is the operator norm from $W^{k,p}_0$ to
            $W^{k-2,p}$),
provided that $v_0$ in the definition of $\mathcal L$ is small enough,
so that the linear map $\ell$ has small norm. Once more, we 
choose $\mathcal C$ as in \eqref{eq:constC}.

We define $g_0 := \chi$ and, by induction,
\begin{equation}\label{eq:gngn-1}
g_{n+1} - g_{n} := -(d_2F_{w,\Phi})^{-1} \circ F_{w,\Phi}(
\ell, g_n),
\end{equation}
i.e the sequence $(g_{n})_{n \geq 0}$ is a Newton
approximation sequence, except that we keep constant the point where
the differential $d_2$ is computed. We will show that $g_n$ converges
to the desired solution $f$, which in addition satisfies the desired estimates.

To this purpose, we will prove by induction that
\begin{align}
\label{prima}\norm{F_{w,\Phi}( \ell, g_n)}_{{k-2, p}} \leq 2^{-n}
\norm{F_{w,\Phi}( \ell, g_0)}_{{k-2, p}}\\\label{seconda}
\norm{g_{n+1} -
  g_n}_{{k, p}} \leq 2^{-n} \norm{ (d_2F_{w,\Phi})^{-1} } \norm{F_{w,\Phi}( \ell,
 g_0)}_{{k-2, p}}
\end{align}
for all $n$ and, along the way, we will make sure that
\begin{equation}
  \label{terza}
\|g_n\|_{k,p}\le \epsilon,
\end{equation}
so that \eqref{eq:boundDLF} and \eqref{eq:bounddF-1} can be applied.
If this holds, then $\{g_n\}$ is a Cauchy sequence and the limit $f$ satisfies
$F_{w,\Phi}(\ell,f)=0$ and \eqref{eq:implicit} with $C=2\sup_{\mathcal W}\|(d_2 F_{w,\Phi})^{-1}\|$ which is finite
because $(w,\Phi)\mapsto (d_2 F_{w,\Phi})^{-1}$ is continuous and $\mathcal W$ is compact.

For $n=0$, \eqref{prima} and \eqref{seconda} are trivial. Also, we choose $v_0$ small enough in the definition of $\mathcal W,\mathcal L$ so that
\begin{eqnarray}
  \label{piccolo}
\|g_0\|_{k,p}\le \frac\epsilon2, \quad \|F_{w,\Phi}(\ell,g_0)\|_{k-2,p}\, \|(d_2 F_{w,\Phi})^{-1}\|\le \frac\epsilon4
\end{eqnarray}
so that in particular also \eqref{terza} holds for $n=0$.
Assume that the three claims hold up to some step $n-1$.
From  \eqref{piccolo} and \eqref{seconda} (for all $k\le n-1$) one easily sees that
\[
\|g_n\|_{k,p}\le \frac\epsilon2+2\|(d_2 F_{w,\Phi})^{-1}\|\, \|F_{w,\Phi}(\ell,g_0)\|_{k-2,p}\le \epsilon
\]
so that \eqref{terza} follows at step $n$.

As for \eqref{prima}, we write
\begin{multline}
  \|F_{w,\Phi}(\ell,g_n)\|_{k-2,p}\\\le \|F_{w,\Phi}(\ell,g_n)-F_{w,\Phi}(\ell,g_{n-1})-d_2 F_{w,\Phi}(\ell,g_{n-1})\circ(g_n-g_{n-1})\|_{k-2,p}\\+
  \|F_{w,\Phi}(\ell,g_{n-1})+d_2 F_{w,\Phi}\circ(g_n-g_{n-1})\|_{k-2,p}\\+
  \|(d_2 F_{w,\Phi}(\ell,g_{n-1})-d_2F_{w,\Phi})\circ(g_n-g_{n-1})\|_{k-2,p}.
\end{multline}
The second term in the last expression is zero by \eqref{eq:gngn-1}.
The sum of the first and third terms is upper bounded, thanks to \eqref{eq:boundDLF} and \eqref{eq:bounddF-1}, by
\begin{eqnarray}
  2\mathcal C \|g_n-g_{n-1}\|_{k,p}\le 4 \mathcal C \, 2^{-n}\|(d_2 F_{w,\Phi})^{-1}\|
  \|F_{w,\Phi}(\ell,g_0)\|_{k-2,p}\\\le  2^{-n}
  \|F_{w,\Phi}(\ell,g_0)\|_{k-2,p},
\end{eqnarray}
where in the first inequality we used \eqref{seconda} for $n-1$ and in the second one the definition of $\mathcal C$. Then, \eqref{prima} at step $n$ follows.
Given this, \eqref{seconda} at step $n$ is proven immediately.

Concerning the continuity statement, first observe that the map \[(w,
\Phi, \ell, g) \to F_{w, \Phi}(\ell, g)\] is continuous for the $W^{k,
  p}$ topology on $\Phi$ and $g$ and the $W^{k-2, p} $ topology on the
target space. Indeed, the map $(\Phi, \ell, g) \to \nabla (\Phi + \ell
+ g)$ is by definition continuous for the $W^{k-1, p}$ topology and in
particular (by \cref{rem:embed}) also for the $W^{k-2, \infty}$ topology. The maps
\[(u\in \mathbb T , w\in W) \to \sigma_{ij}^{(w)}(u), 1\le i,j\le 2\] are smooth and bounded if $T_w^{-1}u$ is uniformly bounded away from the boundary of the
Newton polygon $\mathbb T$, which is the case if $u=\nabla(\Phi+\ell+g)$ because the limit shape satisfies
\cref{ass:ls} and $\ell,g$ are small in the appropriate norms.  By
composition,  \[(w, \Phi, \ell, g) \to \sigma_{ij}^{(w)}(\nabla
(\Phi+\ell+g))\] is continuous
in $W^{k-2, \infty}$. Overall $F_{w, \Phi}(\ell, g)$ is a product of functions continuous in 
$W^{k-2, \infty}$ and functions continuous in  $W^{k-2, p}$, which is still continuous in
$W^{k-2, p}$. This concludes the continuity of $(w,
\Phi, \ell, g) \to F_{w, \Phi}(\ell, g)$.  A similar statement
holds for $d_1 F_{w, \Phi}$ and $d_2 F_{w, \Phi}$.
Recalling that the $g_n$ are defined through compositions of $F_{w,\Phi}$,
$d_2F_{w,\Phi}$ and $d_1 F_{w,\Phi}$ (the latter enters the definition of
$g_0$), we conclude that the functions $g_n$ are  continuous in terms of $(w,
\Phi, \ell)$. Together with the uniform control \[\norm{f - g_n }_{k,p}
\leq C\,2^{-n} \norm{F_{w,\Phi}(\ell, \chi)}_{k-2,p}\le C' 2^{-n},\]
this implies that $f$ is also a continuous function of $(w, \Phi,
\ell)$.
\end{proof}

We can now conclude the proof of our equilibrium estimates.
\begin{proof}[Proof of \cref{th:letrucdestrentederivees}]
  Our aim is to prove \eqref{eq:letruc} about the equilibrium measure
  $\pi_{i,w}$ in the discrete ellipse $(E^{1/2}_{i,w})^\d$. First of
  all, in order to apply \cref{cor:33}, it is convenient to work on a
  domain with diameter of order $1$ centered at zero, rather than with
  diameter of order $r_i$ and centered at $w$. This can be achieved
  via a trivial translation that maps $w$ to the origin and a
  rescaling of lengths by a factor $1/r_i$.  This way,
  $(E^{1/2}_{i,w})^\d$ is replaced by the discretization $(\mathcal
  E^{1/2}_{i,w})^{\d}$ (with lattice mesh $\delta'=\delta/r_i$ instead
  of $\delta$) of an ellipse $\mathcal E_{i,w}^{1/2}$ centered at
  zero, with horizontal axis of length $1/(2m_1(w))$ and of the same aspect
  ratio as $E^{1/2}_{i,w}$. At the same time, we rescale the height
  function (both the discrete one, $h$, and the limit shape,
  $\phi_{i,w}$) by multiplying it by $1/r_i$, and by setting it to 
  zero at the origin. That is, calling $\check h$ (resp.  $\check
  \phi_{i,w}$) the height functions thus obtained, we have
  \begin{eqnarray}
    \check h(x)=\frac1{r_i}\big(h(w+xr_i)-h(w)\big), \quad x\in (\mathcal E^{1/2}_{i,w})^\d,
  \end{eqnarray}
  and similarly for $\check \phi_{i,w}$. Note that the discrete
  gradients of $\check h$ between neighboring vertices of
  $(\mathcal E^{1/2}_{i,w})^{\d}$ are of order $\delta'$ instead of
  $\delta$, and that $\check \phi_{i,w}$ is a limit shape (i.e. a
  solution of \eqref{eq:EL}) in $\mathcal E^{1/2}_{i,w}$. We  call $\check\pi_{i,w}$ the uniform measure
  on height functions $\check h$ in $(\mathcal E^{1/2}_{i,w})^{\d}$,
  with boundary condition given as in \cref{piiw}, up to the rescaling
  just introduced.

  After this trivial rescaling, the inequality \eqref{eq:letruc} to be proven becomes
  \begin{eqnarray}
    \label{rog}
r_i^n\,\check \pi_{i,w}\Big[\big|\check h(x)-\check\phi_{i,w}(x)\big|^n\Big]\le C_n \delta^n|\log \delta|^n.
  \end{eqnarray}
  Since (recall  \cref{def:phiiw}) the limit shape $\check \phi_{i,w}$ is defined on an ellipse $\mathcal E_{i,w}$ that is   $\mathcal E^{1/2}_{i,w}$ expanded by a factor $2$, so that in particular $\mathcal E_{i,w}$ contains the closure of $\mathcal E^{1/2}_{i,w}$,
  applying  \cref{cor:33} we see that the l.h.s. of \eqref{rog} is upper bounded by
  \begin{eqnarray}
r_i^n C_n (\delta')^n |\log \delta'|^n= C_n \delta^n  |\log \delta'|^n.
  \end{eqnarray}
Next we remark that, in view of \eqref{eq:r0rimax}, one has
  \begin{eqnarray}
\delta^{1-3\eta/2}  \le  \delta'=\frac{\delta}{r_i}\le \delta^{(1+\eta)/2}, \quad i\le i_{max},
  \end{eqnarray}
  so that \eqref{rog} holds, up to changing the definition of $C_n$ by
  a multiplicative factor that depends only on $\eta$ and $n$. However,  as pointed out just after the statement of \cref{th:letrucdestrentederivees}, there is still an
  important  and delicate point to be checked, that is that the constants $C_n$ can be chosen
  uniform with respect to $i\le i_{max}$ and $w\in  W$. 
For this, we use a compactness argument based on \cref{claim:inversibilite}.

From \cref{th:letrucdestrentederivees} we know that, if
$\check\phi_{i,w}$ can be extended to some open domain $U$ with
$U\supset \overline{\mathcal E^{1/2}_{i,w}}$, then the constants $C_n$
can be chosen as a function $c_n(U,\check \phi_{i,w}|_U)$ and that,
given $U$, the map $f\mapsto c_n(U,f)$ is continuous with respect to
the Sobolev norm $W^{k,p}(U)$.  A first observation is that, while by
construction the natural domain of definition of $\check \phi_{i,w}$
is the ellipse $\mathcal E_{i,w}$, we are not forced to  take 
$U= \mathcal E_{i,w}$ or any other choice of $U$ that is different for each $(i,w)$; actually a finite number of such domains $U$
suffices. In fact, since the
aspect ratio of the ellipses is uniformly bounded w.r.t. $i\le i_{max},w\in W$, we can
find an integer $k$ and a collection $\{U_s\}_{s\le k}$ of open
domains of $\R^2$ such that for every $i\le i_{max},w\in W$,
  \[
\mathcal E^{1/2}_{i,w}\subset U_{s(i,w)} \subset \mathcal E_{i,w}
  \]
  for some $s(i,w)\le k$. Therefore, we can take $C_n$ as some
  function \[C_n=c'_n(s(i,w),\check\phi_{i,w}|_{U_{s(i,w)}}),\]
  continuous in its second argument. Since $s(i,w)$ takes finitely
  many values, for the issue of the uniform bound on the constants we
  can disregard the dependence of $c'_n$ on the argument $s$.
  Secondly, note that the functions $\check\phi_{i,w}:\mathcal
  E_{i,w}\mapsto \R$ and $\Phi_{i,w}:B(0,1)\mapsto \R$ of
  \eqref{eq:PhiIW} are immediately related by
  \[
\check \phi_{i,w}(T_w x) = \Phi_{i,w}( x), \quad T_w:=(-\nabla^2\psi(w))^{-1/2}
\]
see \eqref{eq:tw}. Since both $T_w$ and its inverse are bounded
uniformly in $w\in W$, and the restriction map
$\check\phi_{i,w}|_{\mathcal E_{i,w}}\mapsto
\check\phi_{i,w}|_{U_{s(i,w)}}$ is obviously a continuous map, we can
take $C_n$ as $C_n=c''_n(\Phi_{i,w})$, with $f\mapsto c''_n(f)$
continuous. Finally, the family $\{\Phi_{i,w}\}_{i,w}$ is
precompact in $W^{k,p}(B(0,1))$ by \cref{cor:rog}. This implies that
the supremum over $i,w$ of $c''_n(\Phi_{i,w})$ is finite.
\end{proof}

\begin{proof}[Proof of \cref{thm:equilibrium_development}]
  In the course of this  proof, 
         $C$ denotes a constant whose value can change from line to
        line.
  We work with $\Phi_{i,w}$, which is obtained from $\phi_{i,w}$ via the rescaling \eqref{eq:PhiIW} and at the end of the proof we go back to $\phi_{i,w}$ to prove \eqref{eq:sa3}.
  Fix $i\le i_{max}$ and $w\in W$ and, with the notations of Remark \ref{rem:ms}, let $\Phi(\cdot):B(0,1)\mapsto
\R$ be the rescaled limit
shape around $w$ and $\ell$ the linear map defined in \eqref{ltw}. If 
        $\chi$ is defined as in \eqref{eq:chichi}, by \cref{prop:analysefonct} and the
        definition of $\Phi_{i,w}$, we have
	\[
	\norm{\Phi_{i,w} -(\Phi + \ell  +  \chi) }_{{k, p}} \leq C \norm{F_{w,\Phi}( \ell,  \chi)}_{{k-2, p}}.
        \label{cucina}
	\]
        This should be read as saying that $\Phi_{i,w}=\Phi+\ell+\chi$
        plus an error term. We will first prove that $\chi$ is essentially a quadratic function and then show that
        $\norm{F_{w,\Phi}( \ell, \chi)}_{k-2, p}$ is very small.

        Recall that the function $\chi$ solves the linear equation
\begin{eqnarray}
 d_2 F_{w, \Phi}\circ \chi = - d_1 F_{w, \Phi}\circ \ell,
\end{eqnarray}
        where $d_2F_{w,\Phi}$ and $d_1F_{w,\Phi}$ are given by
        \eqref{eq:d2F} and \eqref{eq:d1F}, so that
        the latter equation can be written as
        \begin{eqnarray}
          \label{eq:chi}
          \sum_{r,s=1}^2(\partial^2_{x_r x_s}\Phi)\sum_{k=1}^2(\ell_k+\partial_{x_k}\chi)\sigma^{(w)}_{rsk}(\nabla\Phi)+\sum_{r,s=1}^2\sigma^{(w)}_{rs}(\nabla\Phi)\partial^2_{x_r x_s}\chi=0.
        \end{eqnarray}
        Because of the rescaling implicit in the definition of $\Phi$,
        one has $\norm{\partial^2_{x_r x_s} \Phi}_{{k-2, p}} \leq C
        r_i$ and the coefficients $\ell_k$ of the linear map $\ell $
        are of order $\eps_i$, so altogether $\norm{d_1 F_{w,\Phi}
          \circ \ell}_{{k-2, p}} \leq C \eps_i r_i$. Since $[d_2
          F_{w,\Phi}]^{-1}$ is a bounded linear map, we get
        $\norm{\chi}_{{k, p}} \leq C \eps_i r_i$. Thanks to
        \cref{rem:embed}, the $L^\infty$ norm of the second
        derivatives of $\chi$ is also bounded by $C \eps_i r_i$.

In order to show that $\chi$ is well approximated by a quadratic
function vanishing at the boundary of $B(0,1)$, let $Q(x) := (1 -
\norm{x}^2)$ (the additive constant is there so that $Q$ indeed vanishes on $\partial B(0,1)$). For any constant $b$, we view $\chi - b\, Q$ as the
solution of the equation
	\begin{multline}\label{eq:chi-aQ}
          d_2 F_{w,\Phi}\circ(\chi - b\, Q)(x) = -[d_1 F_{w,\Phi}\circ \ell](x)
\\ -2b\sum_{k=1}^2\sigma^{(w)}_{kk}(\nabla\Phi)  - 2b\sum_{r,s,k=1}^2x_k\sigma^{(w)}_{rsk}(\nabla\Phi)\partial^2_{x_r x_s} \Phi.
\end{multline}
Since the surface tension $\sigma$ is strictly convex, or more precisely
$\sigma^{(w)}_{kk}>0$,  we can choose $b$ so that the right hand
side vanishes at the point $x=0$. Reasoning as for the above bound
on $\norm{\chi}_{k,p}$, we see that this choice satisfies $\abs{b}
\leq C\epsilon_ir_i$. Next, we show that, with this choice of $b$,
\[
\label{sa3}
\text{\big|r.h.s. of \eqref{eq:chi-aQ}}\big|\le C\epsilon_i r_i^2,
\text{ uniformly on } B(0,1).\] In fact, note that the space
derivatives of the right hand side of \eqref{eq:chi-aQ} are linear
combinations of terms of the form $\partial^3 \Phi\cdot (\ell + b x)
\sigma( \nabla \Phi)$, $\partial^2 \Phi \cdot (\ell + b x)
\partial(\sigma( \nabla \Phi))$, $b\sigma(\nabla\Phi)\partial^2\Phi$
and $b\partial (\sigma( \nabla \Phi))$, where we omitted all indices
to simplify notations. Thanks to the higher order in the derivatives
and to the estimate on $b$, all these terms are of order at most
$\eps_i r_i^2$.  Since the r.h.s. of \eqref{eq:chi-aQ} is zero for
$x=0$ by the choice of $b$, \eqref{sa3} holds in the entire disk
$B(0,1)$. Higher derivatives of the r.h.s. of \eqref{eq:chi-aQ} are
also bounded by $C \epsilon_i r_i^2$ or even smaller, since each
derivative acting on $\Phi$ brings a factor $r_i$:
  \[
  \label{higherderiv}
  |\partial^{n}_{x_{i_1}\dots x_{i_n}}\Phi|\le C_n r_i^{n-1}.
  \]
In conclusion,
\[
\|        d_2 F_{w,\Phi}\circ(\chi - b\,Q) \|_{k-2,p}\le C \epsilon_i r_i^2
\]
and, by \cref{claim:inversibilite},
\[
\label{frigo}
\|\chi - b\,Q\|_{k,p}\le C \epsilon_i r_i^2.
\]
	
	We now turn to the ``error term''
	\[
	F_{w,\Phi}( \ell ,\chi ) = \sum_{r,s=1}^2 \sigma_{rs}^{(w)}( \nabla ({\Phi} + \ell + {\chi})) \partial^2_{rs} ( \Phi + \chi).
	\]
	Comparing this equation to \eqref{eq:chi} and recalling that $F_{w,\Phi}(0,0)=0$, we see that
        \begin{multline}
        F_{w,\Phi}(\ell,\chi)=        \sum_{r,s,k=1}^2(\ell_k+\partial_{x_k}\chi)\sigma^{(w)}_{rsk}(\nabla\Phi) \partial^2_{x_r x_s}\chi \\+
\sum_{r,s=1}^2\partial^2_{x_r x_s}(\Phi+\chi)\Big[
        \sigma^{(w)}_{rs}(\nabla ({\Phi} + \ell + {\chi}))-\sigma^{(w)}_{rs}(\nabla\Phi)-\sum_{k=1}^2\sigma^{(w)}_{rsk}(\nabla\Phi)(\ell_k+\partial_{x_k}\chi)\Big]
        \end{multline} and note that the last expression contains  a second order Taylor expansions  of  $\sigma_{rs}^{(w)}$. Using the apriori bound $\norm{ \chi}_{k, p} \leq C \eps_i r_i$, $\|\ell\|\le C\epsilon_i$ and the uniform bound \eqref{higherderiv}, we see that \[\norm{F_{w,\Phi}( \ell ,\chi )}_{k-2, p} \leq  C \eps_i^2 r_i\label{freezer}.\]
	Altogether, putting together \eqref{cucina}, \eqref{frigo} and \eqref{freezer}
        we have shown that
        \begin{eqnarray}
          \label{ottperPhi}
\|\Phi_{i,w}-(\Phi+\ell+b\,Q)\|_{k,p}\le C(\epsilon_i r_i^2+\epsilon_i^2 r_i)
        \end{eqnarray}
if the constant $b$ is precisely chosen as above.
        
	Finally we need to undo the scaling and show that
        \eqref{ottperPhi} indeed gives \eqref{eq:sa3} and the other
        statements of the Theorem.
        We have for $y\in E_{i,w}$, up to an additive constant,
        \begin{multline}
          \phi_{i,w}(y)=r_i\Phi_{i,w}\big(\frac1{r_i}t_w^{-1}(y)\big)=\phi(y)+\ell(t_w^{-1}(y))-\frac b{r_i} \|t_w^{-1} (y)\|^2+R(y)\\
          R(y)={r_i}\Big[\Phi_{i,w}-(\Phi+\ell+b\, Q)\Big]\Big(\frac1{r_i}t^{-1}_w(y)\Big).
        \end{multline}
        By construction, one has
        \[
\frac b{r_i} \|t_w^{-1} (y)\|^2=a\epsilon_i Q_w(y),\quad a:=\frac b{\epsilon_i r_i}
\]
with $Q_w$ the quadratic form defined in \eqref{eq:quadform}
and, from \eqref{ltw}, we see that
\[
\ell(t_w^{-1}(y))=\epsilon_i \langle\nabla\psi(w),y-w\rangle.
\]
Note also that  $R$ vanishes on the boundary of $E_{i,w}$ since  $Q|_{\partial B(0,1)}=0$  while the boundary condition of $\Phi_{i,w}$  and $\Phi+\ell$ are the same.
Also,  by \eqref{ottperPhi}, the first  derivatives  of $R$ are bounded by  $C(\epsilon_i r_i^2+\epsilon_i^2 r_i)$. Since the ellipse $E_{i,w}$ has diameter of order $r_i$, we conclude that $|R(x)|\le C
(\eps_i^2 r_i^2+ \eps_i r_i^3)$. Altogether, up to an additive constant, 
	\[
	\phi_{i,w} (y)= \phi(y)+\epsilon_i \langle \nabla\psi(w), y-w\rangle -a\epsilon_i Q_w (y)+ O(\eps_i^2 r_i^2+ \eps_i r_i^3) 
	\]
	where the $O(\eps_i^2 r_i^2+ \eps_i r_i^3) $ term is uniform in $y\in E_{i,w}$. By \eqref{eq:ri} and $r_i\le \delta^{3
          \eta/2}$, we see that  $\epsilon_i r_i^3=o(\delta)$ uniformly in $i$, which completes the proof of \eqref{eq:sa3}.
 The claimed uniformity of the error terms comes from the uniform bound on  the constant
in \cref{prop:analysefonct}  and on all operator
norms involved, which are continuous functions over the compact set
$\mathcal{W} \times \mathcal{L}$.

The validity of \eqref{eq:C''} is simply due to the fact that
$\phi_{i,w}$ has to satisfy the correct boundary condition on
$\partial E_{i,w}$, which imposes uniquely the additive constant
$\epsilon_i C'_{i,w}$ in \eqref{eq:sa3}.

Finally, to see that $a=b/(\epsilon_i r_i)$ can be taken smaller than $1/4$ by taking $\xi$ small enough, let us go back to the definition of $b$ as the value such that the r.h.s. of \eqref{eq:chi-aQ} computed for $x=0$ vanishes.
Recalling \eqref{eq:PhiIW} and \eqref{eq:sS}, this can be rewritten as
\begin{multline}\label{eq:ato0}
  2a \sum_{k=1}^2 \partial^2_{x_k}\psi(w)\sigma_{kk}(\nabla \phi_{i,w}(w))=
  -\epsilon_i r_i \sum_{u,v,k=1}^2 \sigma_{uvk}(\nabla\phi_{i,w}(w))\partial_{x_k}\psi(w)\partial^2_{x_u x_v}\phi_{i,w}(w), 
\end{multline}
with $\sigma_{uvk}$ the derivative of $\sigma_{uv}$ with respect to its $k^{th}$ argument.
Using \eqref{eq:confrontoderivate} and the fact that $\sigma_{kk}$ is strictly positive (by strict convexity of the surface tension), one sees that
$a\le C\xi\le 1/4$ for $\xi$ small enough.

\end{proof}

\section*{Acknowledgements}
F. T. gratefully acknowledges financial support of the  Austria Science Fund (FWF), Project Number P 35428-N. Both authors acknowledge financial support  of Agence Nationale de la Recherche (ANR), Project number ANR-18-CE40-0033.


\begin{thebibliography}{99}

\bibitem{Aggarwal} A. Aggarwal, \emph{Universality of tiling local statistics}, arXiv:1907.09991

  \bibitem{Duse} K. Astala, E. Duse, I. Prause and X. Zhong, \emph{Dimer models and conformal structures}, arXiv:2004.02599


\bibitem{CLL} P. Caputo, C. Labb\'e, H. Lacoin, \emph{Spectral gap and cutoff phenomenon for the Gibbs sampler of $\nabla\varphi$ interfaces with convex potential}, Ann. Inst. H. Poincar\'e Probab. Statist. {\bf 58} (2022), 794-826.
  
 \bibitem{CMT} P. Caputo, F. Martinelli, F. L. Toninelli, \emph{Mixing
    times of monotone surfaces and SOS interfaces: a mean curvature
    approach}, Comm. Math. Phys. {\bf 311} (2012), 157--189
 \bibitem{CMTRSA} P. Caputo, F. Martinelli, F.L. Toninelli, \emph{Convergence to equilibrium of
biased plane partitions}, Random Struct. Algorithms {\bf 39} (2011), 83–114.
   
   
\bibitem{CKP}H. Cohn, R. Kenyon, J. Propp, \emph{A variational principle for domino tilings}, J. Amer. Math. Soc. {\bf 14} (2001), 297--346


 \bibitem{CLP} H. Cohn, M. Larsen, J. Propp, {\em The Shape of a
    Typical Boxed Plane Partition}, New York J. Math. {\bf 4} (1998),
  137--165


\bibitem{Destainville}  N. Destainville, \emph{Flip dynamics in octagonal rhombus tiling sets}, Phys. Rev. Lett. {\bf 88} (2002), 030601

\bibitem{GG} S. Ganguly, R. Gheissari, \emph{Cutoff for the Glauber dynamics of the lattice free field}, arXiv:2108.07791



\bibitem{Giaquinta} M. Giaquinta, \emph{Multiple integrals in the calculus of variations and nonlinear elliptic systems}, Ann. Math. Studies {\bf 105}, 
  Princeton Univ. Press (1983)

\bibitem{GP} S. Greenberg, A. Pascoe, D. Randall, \emph{Sampling biased lattice configurations using exponential
  metrics}, Proc. of the 19th Annual ACM-SIAM Symposium on Discrete Algorithms (2009),76–85.
  
\bibitem{Henley} C. L. Henley, \emph{Relaxation time for a dimer covering with height representation},
J. Statist.
Phys.
{\bf 89} (1997), 483--507


\bibitem{Kenyon-lectures} R. Kenyon,  \emph{Lectures on dimers}, Park City Math Institute Lectures, available at arXiv:0910.3129.
  
\bibitem{KO} R. Kenyon, A. Okounkov, \emph{Limit shapes and the complex Burgers equation}, Acta
  Math. {\bf 199} (2007), 263-302

\bibitem{KL} C. Kipnis,  C. Landim, \emph{Scaling limits of interacting particle systems}. Vol. 320. Springer Science \& Business Media, 1998.
  
\bibitem{krilov} N. V. Krylov, \emph{Lectures on elliptic and parabolic equations in Sobolev spaces}. Vol. 96. American Mathematical Soc., 2008.

\bibitem{LLabbe} C. Labb\'e, H. Lacoin, \emph{Cutoff phenomenon for the asymmetric simple exclusion process and the biased card shuffling}, Annals of Probability {\bf 47} (2019), 1541-1586.
  
\bibitem{Lacoin} H. Lacoin, \emph{Mixing time and cutoff for the adjaent
  transposition shuffle and the simple exclusion}, Ann. Probab. {\bf 44} (2016), 1426-1487.
  
  
\bibitem{L_arxiv}  B. Laslier, \emph{Central limit theorem for lozenge tilings with curved limit shape}, arXiv:2102.05544

  
\bibitem{LT_cmp}  B. Laslier, F. L. Toninelli, \emph{Lozenge tilings, Glauber dynamics and macroscopic shape}, Comm. Math. Phys. {\bf 338} (2015), 1287--1326.

  
\bibitem{LT_ptrf} B. Laslier, F. L. Toninelli, \emph{How quickly can
    we sample a uniform domino tiling of the $2L \times 2L $ square?},
  Prob. Theory Rel. Fields  {\bf 161} (2015), 509--559



  
\bibitem{LP} D. A. Levin,  Y. Peres, \emph{Markov chains and mixing times}, Vol. 107, American Mathematical Soc., 2017.
\bibitem{LRS} M. Luby, D. Randall, A. Sinclair, \emph{Markov Chain
    Algorithms for Planar Lattice Structures}, SIAM J. Comput. {\bf
    31} (2001), 167--192
\bibitem{Spohn}  H. Spohn,
\emph{Interface motion in models with stochastic dynamics}, J. Stat. Phys.
{\bf 71}
(1993), 1081--1132
  
\bibitem{Wilson} D. B. Wilson, \emph{Mixing times of Lozenge tiling
    and card shuffling Markov chains}, Ann. Appl. Probab. {\bf 14} (2004),
274--325
\end{thebibliography}
\end{document}